\allowdisplaybreaks \numberwithin{equation}{section}
\numberwithin{equation}{section}
\newtheorem{theorem}{Theorem}[section]
\newtheorem{lemma}[theorem]{Lemma}
\theoremstyle{definition}
\newtheorem{definition}[theorem]{Definition}
\theoremstyle{remark}
\newtheorem{remark}[theorem]{Remark}
\begin{document}

\title[travelling circular vortex pair for gSQG]
{Existence and stability of smooth travelling circular pairs for the generalized surface quasi-geostrophic equation}

 \author{Daomin Cao, Guolin Qin,  Weicheng Zhan, Changjun Zou}

\address{Institute of Applied Mathematics, Chinese Academy of Sciences, Beijing 100190, and University of Chinese Academy of Sciences, Beijing 100049,  P.R. China}
\email{dmcao@amt.ac.cn}
\address{Institute of Applied Mathematics, Chinese Academy of Sciences, Beijing 100190, and University of Chinese Academy of Sciences, Beijing 100049,  P.R. China}
\email{qinguolin18@mails.ucas.edu.cn}
\address{Institute of Applied Mathematics, Chinese Academy of Sciences, Beijing 100190, and University of Chinese Academy of Sciences, Beijing 100049,  P.R. China}
\email{zhanweicheng16@mails.ucas.ac.cn}
\address{Institute of Applied Mathematics, Chinese Academy of Sciences, Beijing 100190, and University of Chinese Academy of Sciences, Beijing 100049,  P.R. China}
\email{zouchangjun17@mails.ucas.ac.cn}


\begin{abstract}
     In this paper, we construct smooth travelling counter-rotating vortex pairs with circular supports for the generalized surface quasi-geostrophic equation. These vortex pairs are analogues of the Lamb dipoles for the two-dimensional incompressible Euler equation. The solutions are
     obtained by maximization of the energy over some appropriate classes of admissible functions. We establish the uniqueness of maximizers and compactness of maximizing sequences in our variational setting. Using these facts, we further prove the orbital stability of the circular vortex pairs for the gSQG equation.

\end{abstract}

\maketitle

\section{Introduction}

In this paper, we consider the following generalized surface quasi-geostrophic (gSQG) equation
\begin{align}\label{1-1}
	\begin{cases}
		\partial_t\theta+\mathbf{u}\cdot \nabla \theta =0&\text{in}\ \mathbb{R}^2\times (0,T)\\
		\ \mathbf{u}=\nabla^\perp\psi,\ \ \psi=(-\Delta)^{-s}\theta     &\text{in}\ \mathbb{R}^2\times (0,T),\\
	\end{cases}
\end{align}
where $0<s<1$, $(a_1,a_2)^\perp=(a_2,-a_1)$, $\theta$ is the potential temperature being transported by the velocity field $\mathbf{u}$ generated by $\theta$ and $\psi$ is the stream function. The operator $(-\Delta)^{-s}$ in $\mathbb{R}^N$ is the standard inverse of the fractional Laplacian and is given by
\begin{equation}\label{1-2'}
	(-\Delta)^{-s}\theta(x)=\int_{\mathbb{R}^N}G_s(x-y)\theta(y)dy,\ \ G_s(z)=\frac{c_{N,s}}{|z|^{N-2s}},
\end{equation}
where
\begin{equation}\label{1-3'}
	c_{N,s}=\pi^{-\frac{N}{2}}{2^{-2s}}\frac{\Gamma(\frac{N-2s}{2})}{\Gamma(s)},
\end{equation}
and $\Gamma$ is the Euler gamma function. In physics, the gSQG equation is closely related to the temperature propagation on a surface and aggregation-diffusion processes.

Let us remark that the cases $s=1$ and $s={1}/{2}$ correspond to the Euler equation in vorticity formulation and the inviscid surface quasi-geostrophic (SQG) equation, respectively. In the last century, the two-dimensional Euler equation has been intensively studied and the global well-posedness of initial data in $L^1\cap L^\infty$ was proved by Yudovich \cite{Yud}. As the mathematical analogy with the classical three-dimensional incompressible Euler equation, the inviscid SQG equation is another type of active scalar equation, which describes the atmosphere circulation near the tropopause and to track the ocean dynamics in the upper layers \cite{Lap}. In recent years, thanks to the extension method of $(-\Delta)^{-s}$ with $0<s<1$ developed by Caffarelli and Silvestre \cite{Cafs}, there has been tremendous interest in the study of the gSQG equation. However, the Cauchy problem for gSQG equation is extremely delicate. It is difficult to extend the Yudovich theory of weak solutions to the gSQG equation because the velocity is, in general, not Lipschitz continuous. So far we only know the local well-posedness of sufficiently regular initial data \cite{CCG16, CCG20, Chae, Con, Kis1}, and finite time singularities were constructed for a large class of initial data \cite{Kis1,Kis2}. .

Global weak solutions for the SQG equation were first constructed in $L^2$ by Resnick \cite{R95}. Later,  Marchand \cite{Mar08} extended Resnick’s result to the class of initial data belonging to $L^p$ with $p>{4}/{3}$.  Non-uniqueness of weak solutions was recently shown in \cite{BSV19}. A special case of weak solutions are the patch type solutions. For the existences, regularity and uniqueness of patch solutions to the gSQG equation, please see, e.g., \cite{CCG16, CCG16-2, CCG18, CG19, CGS20, HH15, HM17} and the references therein.

Global existence of smooth solutions for the gSQG equation is also a matter of concern. It is well known that all radially symmetric functions $\theta$ are stationary solutions to the gSQG equation due to the structure of the nonlinear term. Exhibiting other global smooth solutions seems a challenging issue. Castro, C\'ordoba and G\'omez-Serrano \cite{ CCG20} constructed the first family of  non-trivial global smooth solutions for the SQG equation by using bifurcation theory. Later, Gravejat and Smets \cite{ GS17} constructed smooth travelling solutions for the SQG eqaution via variational methods. Godard-Cadillac \cite{Go0} generalized Gravejat and Smets's result to the gSQG equation. Very recently, Ao et al. \cite{Asqg} successfully applied the Lyapunov-Schmidt reduction method to construct travelling and rotating solutions (with concentrated vorticity) to the gSQG equation.

In this paper, we consider travelling solutions for the gSQG equation, which means solutions  having the fixed form and travelling with a constant velocity. Up to a rotation, we may assume, without loss of generality, that the travelling solution $\theta(x,t)$ satisfies
\begin{equation}\label{1-4}
	\theta(x,t)=\omega(x-Wt\mathbf{e}_1), \quad \forall\, t\in\mathbb{R},
\end{equation}
where $\omega(x)$ is some profile function defined on $\mathbb{R}^2$, $\mathbf{e}_1=(1,0)$ and $W\geq 0$ is the travelling speed.  At this stage, we assume both $\psi$ and $\theta$ are sufficiently regular. Set $\Psi=(-\Delta)^{-s}\omega$. For travelling solutions, we can use \eqref{1-4} to rewrite the first equation in \eqref{1-1} as
\begin{equation*}
	(\nabla^\perp\Psi-W\mathbf{e}_1)\cdot\nabla\omega=0,
\end{equation*}
which is equivalent to
\begin{equation}\label{1-5}
	\nabla^\perp(\Psi-Wx_2)\cdot\nabla\omega=0.
\end{equation}
As remarked by Arnol'd \cite{Arn}, a natural way of obtaining solutions to the stationary problem \eqref{1-5} is to impose that $\Psi-Wx_2$ and $\omega$ are locally functional dependent.

In 1906, Lamb \cite{Lamb} noted an explicit travelling solution for the two-dimensional Euler equation (i.e. $s=1$), which is now generally referred to as the Lamb dipole or Chaplygin-Lamb dipole; see \cite{Mel}. Let $\Psi_C$ be the function defined by
\begin{equation*}
\Psi_C(x)=\begin{cases}Wx_2-\frac{2WJ_1(\lambda^{1/2}r)}{\lambda^{1/2}J_1'(c_0)r}x_2,\ \ \ \  &\text{for}\ \ 0<r:=\sqrt{x_1^2+x_2^2}\le c_0\lambda^{-1/2},\\
{c_0^2x_2}/{\lambda r^2},\ \ \ \ &\text{for}\ \ r>c_0\lambda^{-1/2},\end{cases}
\end{equation*}
where $J_1$ is the first-order Bessel function of the first kind, $c_0=3.8317\cdots$ is the first positive zero of $J_1$, and $\lambda>0$ is a vortex strength parameter. Then $\Psi_C$ is a solution of
\begin{equation*}
\begin{cases}-\Delta \Psi=\lambda(\Psi-Wx_2)_+ \ \  \text{in}\ \Pi:=\{x\in \mathbb{R}^2\mid x_2>0\},&\\
\Psi\to 0\  \text{as}\  r\to \infty,\ \ \ \Psi=0\ \text{on}\ \partial \Pi,&\\
\Psi(x_1,x_2)=-\Psi(x_1,-x_2),\ \ \forall\,x\in \mathbb{R}^2.&\\
\end{cases}
\end{equation*}
where $f_+$ denotes the positive part of $f$. The Chaplygin-Lamb dipole $\omega_C$ has the following form
\begin{equation*}
  \omega_C(x_1,x_2)=-\omega_C(x_1,-x_2)=\lambda(\Psi_C(x)-Wx_2)_+,\ \ \forall\,x\in\Pi.
\end{equation*}
Observe that $\Psi_C-Wx_2$ is the stream function of a flow whose vorticity is $\omega_C$. This flow is symmetric in the $x_1$-axis and contains a pair of vortices bounded by a single circle. Moreover, its far-field approaches a uniform flow in the $x_1$-direction. Exact solutions of \eqref{1-5} with $s=1$ are known only in special cases. Besides those exact solutions, the existence (and abundance) of travelling vortex pairs for the two-dimensional Euler equation has been rigorously established; see \cite{AC19, B88, CLZ, T} and the references therein. We also remark that there is an analogue of the Chaplygin-Lamb dipole for the three-dimensional axis-symmetric Euler equation, which was introduced by Hill in 1894; see, e.g., \cite{AF86, Choi20, Hill}.

An interesting question is that are there some analogues of the Chaplygin-Lamb dipole for the generalized surface quasi-geostrophic equation? We mention that although many travelling solutions for the gSQG equation have been established (see, e.g., \cite{Asqg, CQZZ, Go0, GS17}), it is still not known whether some circular vortex pairs exist. The purpose of this paper is to provide the first construction of such travelling solutions.
\begin{theorem}\label{Ee}
	For $0<s<1$, there exists a smooth function $\omega_L$ such that
	\begin{equation}\label{1-8}
		\begin{cases}
			\omega_L(x_1,x_2)=-\omega_L(x_1,-x_2),\ \ x\in\mathbb{R}^2,\\
			supp(\omega_L)=\overline{B_1(0)}\,\,\text{is the unit disk in the plane},\\
			\theta(x,t)=\omega_L(x-Wt\mathbf{e}_1)\,\, \text{is a solution of }\,\,\eqref{1-1},\ \text{for some constant} \,\,W>0,
		\end{cases}
	\end{equation}
where $supp(\cdot)$ denotes the support of a function.
\end{theorem}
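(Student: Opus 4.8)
The plan is to adapt the classical variational construction of the Lamb dipole (as in Burton and in the Euler-case papers cited, with the fractional modifications of Godard-Cadillac and of Gravejat--Smets) to the operator $(-\Delta)^{-s}$. I would work on the half-plane $\Pi=\{x_2>0\}$ and look for $\omega_L$ as the odd reflection across $\partial\Pi$ of a nonnegative profile $\omega$. The natural energy functional is
\begin{equation}\label{pp-energy}
	E(\omega)=\frac12\int_{\Pi}\int_{\Pi}\bigl(G_s(x-y)-G_s(x-\bar y)\bigr)\omega(x)\omega(y)\,dx\,dy-W\int_{\Pi}x_2\,\omega(x)\,dx,
\end{equation}
where $\bar y=(y_1,-y_2)$ encodes the odd symmetry, and one maximizes $E$ over the constraint set
\begin{equation}\label{pp-class}
	\mathcal{A}=\Bigl\{\omega\in L^p(\Pi):\ 0\le \omega\le 1\ \text{a.e.},\ \operatorname{supp}(\omega)\subset \overline{B_1(0)}\cap\Pi,\ \int_{\Pi}\omega=\mu\Bigr\}
\end{equation}
for suitable $p$ (dictated by $G_s$ mapping $L^p\to L^\infty$, i.e. $p>N/(2s)=1/s$ after the reflection trick), fixed mass $\mu$, and $W$ chosen afterward so that the support condition is saturated exactly on the unit disk. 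The Euler--Lagrange analysis then gives $\omega=\mathbf 1_{\{\Psi-Wx_2>\lambda\}}+(\text{something})\cdot\mathbf 1_{\{\Psi-Wx_2=\lambda\}}$ for a Lagrange multiplier $\lambda$ associated with the mass constraint, which is exactly the functional-dependence relation \eqref{1-5} demands, so that $\theta(x,t)=\omega_L(x-Wt\mathbf e_1)$ solves \eqref{1-1}.

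The steps, in order: (1) \emph{Well-posedness of $E$ on $\mathcal{A}$}: show $E$ is finite, bounded above, and sequentially weak-$\ast$ (or weak-$L^p$) upper semicontinuous on $\mathcal{A}$ using the Hardy--Littlewood--Sobolev inequality for the kernel part and noting the linear term is weakly continuous because of the compact support constraint; conclude a maximizer $\omega$ exists (the constraint set is convex, bounded, and weak-$L^p$ compact). (2) \emph{Euler--Lagrange / bathtub principle}: perturb $\omega$ within $\mathcal{A}$ (variations that preserve mass and the $0\le\omega\le1$ bound) to deduce that $\omega$ is a "bathtub"-type function of $\Psi-Wx_2-\lambda$, hence of the form above; here one must check $\Psi-Wx_2$ is not constant on positive-measure level sets, so that $\omega$ is genuinely a characteristic function $\mathbf 1_{\{\Psi-Wx_2>\lambda\}}$ — a nonnegativity/strict-monotonicity argument for the fractional kernel. (3) \emph{Support identification}: show that for the right choice of $W$ (a monotonicity or scaling argument in $W$) the free boundary $\{\Psi-Wx_2=\lambda\}\cap\overline{B_1}$ forces $\operatorname{supp}(\omega)=\overline{B_1(0)}\cap\Pi$; this is where the radial symmetry of the ambient constraint ball is used, and one expects to invoke a moving-plane or rearrangement argument to get that $\omega$ is supported on the full half-disk rather than a proper subset. (4) \emph{Regularity}: bootstrap from $\omega\in L^\infty$ with compact support to $\Psi\in C^{1,\alpha}$, then use the free-boundary equation $\Psi-Wx_2=\lambda$ on $\{|x|=1\}$ together with elliptic regularity for $(-\Delta)^s$ to upgrade $\omega_L$ to be smooth; the odd reflection is smooth across $\partial\Pi$ because $\Psi-Wx_2$ vanishes there to the right order.

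I expect the main obstacle to be step (3)–(4): pinning down that the support is \emph{exactly} the unit disk and that $\omega_L$ is \emph{smooth} (not merely $C^{1,\alpha}$ or a patch). The nonlocality of $(-\Delta)^{-s}$ means the usual ODE/Bessel-function computation that makes the Euler case explicit is unavailable, so the circular shape of the support must be forced structurally — presumably by building the radial symmetry into the constraint ($\operatorname{supp}\subset \overline{B_1}$) and then proving the maximizer pushes mass all the way to the boundary sphere, and by proving the level set $\{\Psi-Wx_2=\lambda\}$ meets $\overline{B_1}$ only on $\partial B_1$. Smoothness then requires a careful fractional elliptic bootstrap near the free boundary, analogous to but more delicate than the analysis in \cite{CCG20, GS17, Go0}. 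A secondary technical point is choosing $p$ and the functional-analytic framework so that all integrals in \eqref{pp-energy} converge and the variational derivative $\Psi=(-\Delta)^{-s}\omega_L$ is continuous; this is routine given \eqref{1-2'}--\eqref{1-3'} but must be done with care when $s$ is small.
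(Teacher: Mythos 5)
There are two genuine gaps, and they are structural rather than technical. First, your variational setup is the Turkington-type patch construction: you maximize the kinetic energy minus $W$ times the impulse over the class $\{0\le\omega\le1\}$ with fixed mass and, via the bathtub principle, you explicitly aim for $\omega=\mathbf 1_{\{\Psi-Wx_2>\lambda\}}$. A characteristic function of a nontrivial set is never smooth, so no amount of bootstrapping in your step (4) can produce the smooth, Lamb-dipole-type profile that the theorem asserts; smoothness of $\Psi$ does not transfer to $\omega$ when $\omega$ jumps across the free boundary. The paper avoids this precisely by penalizing the energy with the $L^2$ term $\tfrac{1}{2\lambda}\int_\Pi\omega^2\,dx$ and constraining the impulse (not the support): the Euler--Lagrange relation then reads $\omega=\lambda(\mathcal G_s\omega-Wx_2-\gamma)_+$ with $\gamma=0$ and $W>0$ for small impulse, so the vorticity vanishes continuously at the edge of its support, exactly as for the Chaplygin--Lamb dipole, and $W$ appears as a Lagrange multiplier rather than being fixed in advance.

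Second, your support identification does not work as stated. By building $\operatorname{supp}(\omega)\subset\overline{B_1}\cap\Pi$ into the admissible class, you introduce an obstacle at $\partial B_1$: the Euler--Lagrange relation between $\omega$ and $\Psi-Wx_2$ then holds only where the constraint is inactive, and if the level set $\{\Psi-Wx_2>\lambda\}$ protrudes outside $B_1$ (which nothing rules out), the maximizer is not a function of $\Psi-Wx_2$ on all of $\Pi$, so \eqref{1-5} fails and $\theta(x,t)=\omega_L(x-Wt\mathbf e_1)$ need not solve \eqref{1-1}. You also offer no mechanism for showing the free boundary is exactly the unit circle beyond the hope of "a moving-plane or rearrangement argument"; the half-plane geometry with the odd reflection does not yield circular symmetry directly. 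In the paper the circular support is a theorem, not a constraint: after establishing $\omega=\lambda(\mathcal G_s\omega-Wx_2)_+$ with bounded support, the stream function is lifted to $\mathbb R^4$ via $\phi(y)=\psi(y_4,|y'|)/|y'|$, which is shown to satisfy the integral equation $\phi(x)=\int_{\mathbb R^4}c_{4,s}|x-y|^{2s-4}(\phi(y)-W)_+\,dy$, and the method of moving planes together with the uniqueness result for the fractional plasma problem \cite{C20} gives radial symmetry of $\phi$, hence a half-disk support (up to translation), which rescales to the unit disk. This dimensional-lifting step is the key idea missing from your outline; without it, or an equivalent symmetry argument, neither the shape of the support nor the uniqueness/stability structure of the paper can be recovered.
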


The solution $\omega_L$ is of the form
\begin{equation*}
  \omega_L=\lambda(\Psi_L-Wx_2)_+\ \ \text{in}\  \Pi,
\end{equation*}
where $\lambda$ is a positive number and $\Psi_L:=(-\Delta)^{-s}\omega_L$. Moreover, $\Psi_L$ is a solution of
\begin{equation*}
\begin{cases}(-\Delta)^s \Psi=\lambda(\Psi-Wx_2)_+ \ \ \ \ &\text{in}\ \Pi,\\
\Psi\to 0\  \text{as}\  r\to \infty,\ \ \ \Psi=0\ \ &\text{on}\ \partial \Pi,\\
\Psi(x_1,x_2)=-\Psi(x_1,-x_2),\ \ &\forall\,x\in \mathbb{R}^2.\\
\end{cases}
\end{equation*}
It is easy to see that $\omega_L$ is actually an analogue of the Chaplygin-Lamb dipole for the two-dimensional Euler equation and Hill's vortex for the three-dimensional axis-symmetric Euler equation.

We construct the solutions by using the variational method. Since the desired flows are symmetric about the $x_1$-axis, we can restrict our attention henceforth to the upper half-plane $\Pi$. Let $\bar{x}=(-x_1, x_2)$ be the reflection of $x$ in the $x_1$-axis. Denote
\begin{equation}\label{1-9}
	G_\Pi(x,y)=c_{2,s}\left(\frac{1}{|x-y|^{2-2s}}-\frac{1}{|x-\bar{y}|^{2-2s}}\right), \quad\forall\, x, y\in \Pi,
\end{equation}
and
\begin{equation}\label{1-10}
	\mathcal{G}_s\omega(x)=\int_\Pi G_\Pi(x,y)\omega(y) dy, \quad \forall\, x\in \Pi.
\end{equation}	
We introduce the kinetic energy of the fluid
\begin{equation*}
  E(\omega)=\frac{1}{2}\int_\Pi \omega(x)\mathcal{G}_s \omega(x) dx,
\end{equation*}
and its impulse
\begin{equation*}
  I(\omega)=\int_{\mathbb{R}^2}x_2\omega(x)dx.
\end{equation*}
For $1/2<s<1$ and $0<\mu,\nu,\lambda<\infty$, we set the space of admissible functions
\begin{equation*}
  \mathcal{A}^s_{\mu, \nu}:=\Big{\{}\omega\in L^2(\Pi)\mid \omega\geq 0, \int_\Pi x_2\omega dx=\mu, \int_\Pi\omega dx\leq \nu\Big{\}}.
\end{equation*}
We define the energy functional $\mathcal{E}_\lambda$ by
\begin{equation*}
  \mathcal{E}_\lambda(\omega)=E(\omega)-\frac{1}{2\lambda}\int_\Pi \omega^2 dx,\ \ \ \ \omega\in \mathcal{A}^s_{\mu, \nu}.
\end{equation*}
We will consider the maximization of the energy functional $\mathcal{E}_\lambda$ relative to $\mathcal{A}^s_{\mu, \nu}$. When $0<s\le1/2$, the kinetic energy is not well-defined in $L^1(\Pi)\cap L^2(\Pi)$ due to the singularity of $G_\Pi$. For this reason, we
define the space of admissible functions in this case by
\begin{equation*}
  \mathcal{A}^s_{\mu, \nu}:=\Big{\{}\omega\in L^\infty(\Pi)\mid \omega\geq 0, \int_\Pi x_2\omega dx=\mu, \int_\Pi\omega dx\leq \nu\Big{\}}.
\end{equation*}
It seems rather difficult to show the existence of maximizers for $\mathcal{E}_\lambda$ over $\mathcal{A}^s_{\mu, \nu}$ directly because of the lack of compactness of maximizing sequences. To overcome this difficulty, we add an additional constraint on the $L^\infty$ norm and consider the maximization problem in the space
\begin{equation*}
  \mathcal{A}^{s,\Gamma}_{\mu, \nu}:=\Big{\{}\omega\in L^\infty(\Pi)\mid 0\leq \omega\leq \Gamma, \int_\Pi x_2\omega dx=\mu, \int_\Pi\omega dx\leq \nu\Big{\}},
\end{equation*}
for some $\Gamma>0$. We prove the existence of maximizers for $\mathcal{E}_\lambda$ over $\mathcal{A}^{s,\Gamma}_{\mu, \nu}$, and fortunately these maximizers are uniformly bounded by a constant that is independent of $\Gamma$. Hence, the constraint $\omega\leq \Gamma$ is indeed redundant for $\Gamma$ large. In other words, we obtain the existence of maximizers for $\mathcal{E}_\lambda$ over $\mathcal{A}^s_{\mu, \nu}$.

We will prove that there exists a constant $\mu_0>0$ such that if $0<\mu\nu^{-1}\lambda^{\frac{1}{2s}}\leq \mu_0$, then each maximizer of $\mathcal{E}_\lambda$ relative to $\mathcal{A}^s_{\mu, \nu}$ is indeed a desired solution after some suitable $x_1$-translation. By using the method of moving planes, we can also prove the uniqueness of maximizers whenever $0<\mu\nu^{-1}\lambda^{\frac{1}{2s}}\leq \mu_0$ in the sense that any two maximizers differ by only a translation in the $x_1$-direction. Denote
\begin{equation}\label{1-11}
	S^s_{\mu, \nu,\lambda}=\sup_{\omega\in \mathcal{A}^s_{\mu, \nu}}\mathcal{E}_\lambda(\omega),\quad \Sigma^s_{\mu, \nu,\lambda}=\big{\{}\omega\in \mathcal{A}^s_{\mu, \nu}\mid \mathcal{E}_\lambda(\omega)=S^s_{\mu, \nu,\lambda}\big{\}}.
\end{equation}
\begin{theorem}\label{Uq}
	Let $0<s<1$. Suppose $0<\mu\nu^{-1}\lambda^{\frac{1}{2s}}\leq \mu_0$, then there exists an $\omega^s_{\mu, \nu,\lambda}\in \mathcal{A}^s_{\mu, \nu}$ such that
\begin{equation*}
  \Sigma^s_{\mu, \nu,\lambda}=\{\omega^s_{\mu, \nu,\lambda}(\cdot+c\mathbf{e}_1)\mid c\in\mathbb{R}\}.
\end{equation*}
Moreover, $\text{supp}(\omega^s_{\mu, \nu,\lambda})$ is a half disk centered at the origin.
\end{theorem}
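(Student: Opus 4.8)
The plan is to prove Theorem \ref{Uq} in three stages: existence of a maximizer (first for an $L^\infty$-truncated problem, then for $\mathcal A^s_{\mu,\nu}$ itself), an Euler--Lagrange analysis identifying the maximizer with a solution of \eqref{1-1}, and a moving-plane analysis yielding symmetry, from which both the half-disk shape and the uniqueness up to $x_1$-translation follow. Note first that $\mathcal E_\lambda$ is invariant under $x\mapsto x+c\mathbf e_1$, so $\Sigma^s_{\mu,\nu,\lambda}$ is automatically closed under horizontal translation; the content is that there are no other maximizers and that, in the chosen representative, the support is a half-disk centered at the origin.

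For existence, fix a large $\Gamma$ and take a maximizing sequence $\{\omega_n\}$ for $\mathcal E_\lambda$ over $\mathcal A^{s,\Gamma}_{\mu,\nu}$. A direct computation from \eqref{1-9} shows that $G_\Pi(x,y)$, regarded as a function of $x_1-y_1$ with $x_2,y_2$ frozen, is even and strictly decreasing in $|x_1-y_1|$; hence Steiner symmetrization in the $x_1$-variable does not decrease $E$ while preserving $I(\omega)$, $\int_\Pi\omega$, $\int_\Pi\omega^2$ and $\|\omega\|_\infty$, so we may take each $\omega_n$ symmetric decreasing in $x_1$. A concentration-compactness argument then extracts a subsequence converging (weak-$*$, and strongly enough for $E$ to pass to the limit) to a maximizer $\omega_\Gamma$: escape of mass as $x_2\to+\infty$ is forbidden by $\int x_2\omega_n=\mu$, escape toward $\partial\Pi$ is harmless since $\int_{\{x_2<\delta\}}x_2\omega_n\le\delta\nu$, and dichotomy is excluded by the strict super-additivity of $\mu\mapsto S^s_{\mu,\nu,\lambda}$ (or, more simply, by $0\le\omega_n\le\Gamma$). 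The Euler--Lagrange equation reads $\omega_\Gamma=\min\{\Gamma,\ \lambda(\mathcal G_s\omega_\Gamma-Wx_2-\gamma)_+\}$, with Lagrange multipliers $W\ge 0$ for the impulse and $\gamma\ge 0$ for the mass. The de-truncation step is crucial: from $\omega_\Gamma\le\lambda\mathcal G_s\omega_\Gamma$ and the interpolation bound $\|\mathcal G_s\omega\|_\infty\le C\|\omega\|_{L^1}^{s}\|\omega\|_{L^\infty}^{1-s}$ (split the kernel at radius $\rho$ and optimize in $\rho$), one gets $\|\omega_\Gamma\|_\infty\le(C\lambda\nu^{s})^{1/s}=:\Gamma_0$, independent of $\Gamma$; hence for $\Gamma\ge\Gamma_0$ the constraint $\omega\le\Gamma$ is inactive, $\sup_{\mathcal A^{s,\Gamma}_{\mu,\nu}}\mathcal E_\lambda$ is constant in $\Gamma$, and since this supremum increases to $S^s_{\mu,\nu,\lambda}$ as $\Gamma\to\infty$ (a standard truncation-and-rescaling argument), $\omega_{\Gamma_0}=:\omega^s_{\mu,\nu,\lambda}$ is a maximizer over $\mathcal A^s_{\mu,\nu}$.

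Next, the smallness hypothesis $\mu\nu^{-1}\lambda^{1/(2s)}\le\mu_0$ is used to show the mass constraint is inactive (a small impulse does not call for the full mass budget), so $\gamma=0$, and that $W>0$ (via a variational inequality obtained by displacing mass vertically); the Euler--Lagrange equation becomes $\omega=\lambda(\mathcal G_s\omega-Wx_2)_+$ in $\Pi$. Since $\mathcal G_s\omega$ is bounded and decays away from the bulk of the mass while $-Wx_2\to-\infty$ as $x_2\to+\infty$, a maximum-principle/comparison argument confines $\mathrm{supp}(\omega)$ to a bounded region, and one verifies that $\theta(x,t)=\omega(x-Wt\mathbf e_1)$ solves \eqref{1-1}. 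Now set $u=\mathcal G_s\omega-Wx_2$ and pass to the odd-in-$x_2$ extensions $\tilde\Psi,\tilde u,\tilde\omega$ on $\mathbb R^2$, so that $(-\Delta)^s\tilde\Psi=\tilde\omega$ with a nonlinearity that is $x_1$-translation invariant and monotone in $\tilde u$. Running the moving-plane method in the $x_1$-direction — the linear term $-Wx_2$ cancels in the difference $w_t(x)=\tilde\Psi(2t-x_1,x_2)-\tilde\Psi(x_1,x_2)$, which decays at infinity, so the anti-symmetric maximum principle for $(-\Delta)^s$ applies despite the unboundedness of $\Pi$ — shows $\tilde\Psi$ is symmetric about a hyperplane $\{x_1=c\}$ and strictly monotone away from it; after the corresponding $x_1$-translation, $\omega^s_{\mu,\nu,\lambda}$ is even in $x_1$ and strictly decreasing in $|x_1|$ on its support. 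To upgrade this to full radial symmetry — hence to the half-disk — pass to $v=u/x_2$ and use the representation of $(-\Delta)^s$ acting on functions of the form $x_2v$ with $v$ even in $x_2$: this recasts the equation as a fractional semilinear equation on $\Pi$ carrying a Hardy-type weight (the analogue of the higher-dimensional radial reformulation underlying the link with Hill's vortex), for which the moving-plane method applies in \emph{every} direction and forces $v$ to be radially symmetric and non-increasing in $r=|x|$. Therefore $\{v>0\}=B_\rho(0)\cap\Pi$, so $\mathrm{supp}(\omega^s_{\mu,\nu,\lambda})=\overline{B_\rho(0)}\cap\overline\Pi$ is a half-disk centered at the origin, and $v$ is then determined, up to the $x_1$-translation already used, by the radial-profile equation with first zero at $\rho$ together with the impulse normalization; this gives $\Sigma^s_{\mu,\nu,\lambda}=\{\omega^s_{\mu,\nu,\lambda}(\cdot+c\mathbf e_1)\mid c\in\mathbb R\}$.

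The hard part will be the symmetry analysis: making the moving-plane method work for the nonlocal operator $(-\Delta)^s$ on the unbounded domain $\Pi$ with a nonlinearity that is not invariant under vertical translations (the $-Wx_2$ term), and especially identifying the correct weighted/lifted fractional equation satisfied by $u/x_2$ so that moving planes can be run in every direction — without this one obtains only an $x_1$-symmetric, not a circular, support, and the uniqueness would be out of reach. A secondary, more technical difficulty is the compactness of maximizing sequences, which is precisely why the $L^\infty$-truncated class $\mathcal A^{s,\Gamma}_{\mu,\nu}$ is introduced; there one must check carefully that the impulse constraint alone rules out vertical spreading of mass.
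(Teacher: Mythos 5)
Your existence and Euler--Lagrange stages track the paper closely (truncated class $\mathcal{A}^{s,\Gamma}_{\mu,\nu}$, Steiner symmetrization, a uniform $L^\infty$ bound making the truncation inactive, then smallness of $\mu\nu^{-1}\lambda^{1/2s}$ to show the mass constraint is inactive so $\gamma=0$ and $W>0$, and boundedness of the support), so that part is fine. The genuine gap is in the symmetry/uniqueness step. You propose to recast $v=u/x_2$ as a solution of a ``fractional semilinear equation on $\Pi$ with a Hardy-type weight'' and then to run the moving-plane method \emph{in every direction} on $\Pi$. That does not work as stated: the weighted problem on the half-plane is only invariant under reflections in vertical lines, since reflections in any other direction neither preserve $\Pi$ nor commute with the weight, so there is no admissible family of planes to move except those orthogonal to $\mathbf{e}_1$ --- which yields only Steiner-type symmetry in $x_1$, not the circular support. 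The mechanism you are gesturing at requires an actual lift out of the half-plane: the paper proves the explicit kernel identity showing that $\phi(y):=\psi(y_4,|y'|)/|y'|$, $y=(y',y_4)\in\mathbb{R}^3\times\mathbb{R}$, solves the \emph{unweighted} integral equation
\begin{equation*}
\phi(x)=\int_{\mathbb{R}^4}\frac{c_{4,s}}{|x-y|^{4-2s}}\,\bigl(\phi(y)-W\bigr)_+\,dy ,
\end{equation*}
i.e.\ the fractional plasma problem in $\mathbb{R}^4$, where full translation/rotation invariance is restored and moving planes can legitimately be applied (after checking decay $\phi=O(|y|^{2s-4})$, which uses the bounded support). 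This kernel computation is the heart of the argument and is absent from your sketch.

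The second part of the gap is uniqueness. You conclude by saying the radial profile is ``determined by the radial-profile equation with first zero at $\rho$ together with the impulse normalization.'' For a nonlocal operator there is no shooting/ODE argument that determines the profile from its first zero; uniqueness of the radial solution is itself a hard theorem, and the paper obtains it by invoking Theorem 1.1 of \cite{C20} (uniqueness of entire ground states for the fractional plasma problem) applied to the lifted equation above, which gives that $\phi$ is radial about a point $(0,y_4^0)$ and unique up to translation in $y_4$; translating back yields $\Sigma^s_{\mu,\nu,\lambda}=\{\omega^s_{\mu,\nu,\lambda}(\cdot+c\mathbf{e}_1)\}$ and the half-disk support. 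Without either reproducing such a uniqueness theorem or citing it, your final step asserts precisely what needs to be proved.
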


In \cite{AF86}, Amick and Fraenkel established the uniqueness of Hill's vortex. The uniqueness of the Chaplygin-Lamb dipole was shown by Burton \cite{B96}. These uniqueness results, in conjunction with characterization of the energy of solutions, are fundamental in establishing compactness of maximizing sequences and stability of solutions. We refer the interested readers to \cite{AC19, B05, B21, BNL13, Choi20} and the references therein concerning the stability of vortex solutions for the Euler equation.

As for the gSQG equation, there seem to be very few results on stability. However, since the solution we construct here has a special energy characteristic, we can prove its orbital stability. Similar to Burton \cite{BNL13}, we introduce the following $L^p$-regular solutions.

\begin{definition}\label{def2}
	$\xi\in L^\infty_{loc}(\left[0,T\right), L^1(\mathbb{R}^2))\cap  L^\infty_{loc}(\left[0,T\right), L^p(\mathbb{R}^2))$ is called a \emph{ $L^p$ regular solution} of \eqref{1-1} if $\xi$ satisfies \eqref{1-1}
	in the sense of distributions, such that $E(\xi(t,\cdot))$, $I(\xi(t,\cdot))$ and $||\xi(t,\cdot)||_q$ for $1\leq q\leq p$ are constant for $t\in[0, T)$. Moreover, if $\xi_0$ is odd symmetric in $x_2$, then  $\xi(t, \cdot)$ is also odd symmetric in $x_2$.	
\end{definition}

Generally speaking, a $L^p$-regular solution is weak solution to \eqref{1-1} such that its kinetic energy, impulse and $L^q$ norms for $1\leq q\leq p$ conserve; see \cite{BSV19} for some discussion about the conservation laws.

We set $p_s=\infty$ if $0<s\leq {1}/{2}$ and $p_s=2$ if ${1}/{2}<s<1$. For a function $\xi$ defined on $\Pi$, we denote its odd extension to the whole plane by $\bar{\xi}$. Our stability theorem is as follows.

\begin{theorem}\label{Os}
	The circular vortex-pair $\omega_L$ obtained in Theorem \ref{Ee} is orbital stable in the sense that, for arbitrary $\nu>0$, $M>0$ and $\varepsilon>0$, there exists $\delta>0$ such that for non-negative function $\xi_0\in L^1\cap L^{p_s}(\Pi)$ with $||\xi_0||_1\leq \nu$, $||\xi_0||_{p_s} <M$ and
	\begin{equation}\label{1-14}
		\inf_{c\in \mathbb{R}}\Big{\{}\|\xi_0-\omega_L(\cdot+c\mathbf{e}_1)\|_2+\|x_2(\xi_0-\omega_L(\cdot+c\mathbf{e}_1))\|_1\Big{\}} \leq \delta,
	\end{equation}
	if there exists a $L^{p_s}$-regular solution $\xi(t)$ with initial data $\overline{\xi_0}$ for $t\in [0, T)$ with $0<T\leq \infty$, then
	\begin{equation}\label{1-15}
		\inf_{c\in \mathbb{R}} \Big{\{}\|\xi(t)-\omega_L(\cdot+c\mathbf{e}_1)\|_2+\|x_2(\xi(t)-\omega_L(\cdot+c\mathbf{e}_1))\|_1\Big{\}}\leq \varepsilon, \quad \forall\, t\in[0,T).
	\end{equation}
\end{theorem}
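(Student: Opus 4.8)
The plan is to argue by contradiction using the variational characterization of $\omega_L$ together with the conservation laws built into the definition of an $L^{p_s}$-regular solution. Suppose the conclusion fails; then there exist $\nu, M, \varepsilon>0$, a sequence of initial data $\xi_0^n\ge 0$ with $\|\xi_0^n\|_1\le\nu$, $\|\xi_0^n\|_{p_s}<M$, and times $t_n\in[0,T_n)$ such that the left-hand side of \eqref{1-14} is at most $1/n$ while the quantity in \eqref{1-15} exceeds $\varepsilon$ at $t=t_n$. Write $\eta_n=\xi(t_n,\cdot)|_\Pi$ for the restriction to the upper half-plane of the corresponding regular solution at time $t_n$. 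Because $I$, $E$, and the $L^q$-norms ($1\le q\le p_s$) are conserved and $\xi$ stays odd in $x_2$, the functions $\eta_n$ satisfy $I(\eta_n)=I(\xi_0^n|_\Pi)=:\mu_n$, $\|\eta_n\|_1=\|\xi_0^n|_\Pi\|_1=:\tilde\nu_n\le\nu$, and $\mathcal{E}_\lambda(\eta_n)=\mathcal{E}_\lambda(\xi_0^n|_\Pi)$, with $\lambda$ the parameter attached to $\omega_L$; moreover $\eta_n\ge 0$ and (in the range $1/2<s<1$) $\|\eta_n\|_2$ is bounded, while for $0<s\le 1/2$, $\|\eta_n\|_\infty<M$.

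The key point is that the hypothesis \eqref{1-14} forces $\mu_n\to\mu$, $\tilde\nu_n\to\nu$ (equivalently, the relevant constraint parameters converge to those of $\omega_L$), and $\mathcal{E}_\lambda(\xi_0^n|_\Pi)\to\mathcal{E}_\lambda(\omega_L)=S^s_{\mu,\nu,\lambda}$: the first two because $\|x_2(\xi_0^n|_\Pi-\omega_L(\cdot+c_n\mathbf e_1))\|_1$ and $\|\xi_0^n|_\Pi-\omega_L(\cdot+c_n\mathbf e_1)\|_1$ (controlled by the $L^2$-norm on the common compact support plus a tail estimate) tend to zero, the last because $\mathcal{E}_\lambda$ is continuous in the $L^2\times$(weighted $L^1$) topology on the admissible set — here one uses the boundedness of $\mathcal G_s$ as an operator and the uniform bounds on mass and higher norms to control the quadratic energy term. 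Hence $(\eta_n)$ is, after rescaling to fixed constraints $\mu,\nu$, a maximizing sequence for $\mathcal{E}_\lambda$ over $\mathcal{A}^s_{\mu,\nu}$. By the compactness of maximizing sequences established in the variational analysis (the same compactness that, combined with the uniqueness Theorem \ref{Uq}, underlies the whole construction), there is a subsequence and translations $d_n\in\mathbb R$ with $\eta_n(\cdot+d_n\mathbf e_1)\to\omega^s_{\mu,\nu,\lambda}=\omega_L$ strongly in $L^2(\Pi)$, and with convergence of the weighted quantity $x_2\eta_n$ in $L^1$ as well. This contradicts $\inf_c\{\|\eta_n-\omega_L(\cdot+c\mathbf e_1)\|_2+\|x_2(\eta_n-\omega_L(\cdot+c\mathbf e_1))\|_1\}>\varepsilon$, proving the theorem.

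The main obstacle is the compactness step: extracting a strongly convergent subsequence from a maximizing sequence with only $L^2$ (or $L^\infty$) plus impulse bounds requires ruling out the two failure modes of concentration-compactness — mass escaping to spatial infinity and mass splitting into separating lumps. Vanishing at infinity in the $x_1$-direction is excluded by translating (this is exactly why the conclusion is stated modulo $x_1$-translation) and then using the impulse constraint $\int x_2\eta_n=\mu$ together with the decay of $G_\Pi$ to show the energy of a far-spread configuration is strictly below $S^s_{\mu,\nu,\lambda}$; escape in the $x_2$-direction (towards $\partial\Pi$ or towards $x_2=\infty$) is controlled because $G_\Pi(x,y)\to 0$ as either point approaches the boundary and the impulse pins the $x_2$-center of mass. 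Dichotomy is ruled out by the strict superadditivity/subadditivity of $S^s_{\mu,\nu,\lambda}$ in its parameters — a strict concavity-type inequality for the energy that must be verified in the regime $0<\mu\nu^{-1}\lambda^{1/2s}\le\mu_0$, and which is precisely where the smallness parameter $\mu_0$ (and the exact-solution structure $\omega_L=\lambda(\Psi_L-Wx_2)_+$ with circular support) is used. Once these are in place, the upgrade from weak to strong $L^2$ convergence follows from convergence of norms ($\|\eta_n\|_2^2\to\|\omega_L\|_2^2$, forced by $\mathcal{E}_\lambda(\eta_n)\to S$ and $E(\eta_n)\to E(\omega_L)$) plus weak convergence, and the weighted-$L^1$ convergence follows from the non-escape of mass.
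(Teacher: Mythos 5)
Your proposal follows essentially the same route as the paper: argue by contradiction, use the conservation laws of $L^{p_s}$-regular solutions plus continuity of $\mathcal{E}$ under small $L^2$ and weighted-$L^1$ perturbations to show the perturbed data form a maximizing sequence, then invoke the concentration-compactness result (Theorem \ref{cp}) and the characterization of $\Sigma^s_\mu$ as translates of $\omega_L$ to reach a contradiction. The only cosmetic difference is your unnecessary (and not actually needed) claim that $\|\xi_0^n\|_1$ converges to $\nu$; the paper only uses $\|\xi^n\|_1\le \nu$ together with $\mu_n\to\mu$.
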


\begin{remark}\label{rem2}
Orbital stability of the Chaplygin-Lamb dipole was considered in \cite{AC19}(see \cite{Choi20} for Hill's vortex). Our stability result is similar to that of \cite{AC19}. These results reveal some similarities between the Euler equation and the gSQG equation.
\end{remark}

This paper is organized as follows. In section 2, we give some basic estimates and show the existence of maximizers of $\mathcal{E}_\lambda$ relative to $\mathcal{A}^s_{\mu,\nu}$. In section 3, we establish the uniqueness of maximizers. We prove the compactness of maximizing sequences in section 4 by using the concentrated compactness method of Lions \cite{L84}. Finally, we show the orbital stability in section 5.

In what follows, the symbol $C$ denotes a general positive constant that may change from line to line. For $p\in [1,+\infty]$, we use $p'$ to denote its conjugate exponent, that is, $1/p'+1/p=1$. We denote by $B_R(x)$ the open ball in $\mathbb{R}^2$ of center $x$ and radius $R>0$. If $\Omega\subset\mathbb{R}^2$ is measurable, then $\text{meas}\,({\Omega})$ denotes the two-dimensional Lebesgue measure of $\Omega$. Let ${1}_\Omega$ denote the characteristic function of $\Omega\subset\mathbb{R}^2$.

\section{Existence of maximizers}
In this section, we show the existence of maximizers. We first give some basic estimates used frequently later.

 \begin{lemma}\label{lm1}
 Suppose that $0<s<1$ and ${1}/{s}<p\leq \infty$. Then there is a positive constant $C$, depending only on $s$ and $p$, such that if $0\le \omega\in L^1(\Pi)\cap L^p(\Pi)$, then
 	\begin{equation}\label{2-1}
 		\|\mathcal{G}_s\omega\|_\infty\leq C\left(\|\omega\|_1+\|\omega\|_p\right),
 	\end{equation}
 	\begin{equation}\label{2-2}
 		E(\omega)\leq C\|\omega\|_1\left(\|\omega\|_1+\|\omega\|_p\right).
 	\end{equation}

 	
 \end{lemma}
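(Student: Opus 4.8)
The plan is to estimate $\mathcal{G}_s\omega$ pointwise by splitting the convolution integral into a near-field piece and a far-field piece, and then to deduce the energy bound by pairing with $\omega$. First I would observe that $0\le G_\Pi(x,y)\le c_{2,s}|x-y|^{-(2-2s)}$, since $|x-\bar y|\ge|x-y|$ for $x,y\in\Pi$; hence it suffices to bound $\int_\Pi |x-y|^{-(2-2s)}\omega(y)\,dy$. Fix a radius $R>0$ to be chosen, and write
\begin{equation*}
\int_\Pi \frac{\omega(y)}{|x-y|^{2-2s}}\,dy
= \int_{B_R(x)\cap\Pi} \frac{\omega(y)}{|x-y|^{2-2s}}\,dy
+ \int_{\Pi\setminus B_R(x)} \frac{\omega(y)}{|x-y|^{2-2s}}\,dy =: \mathrm{I}+\mathrm{II}.
\end{equation*}
For $\mathrm{II}$, since $|x-y|\ge R$ and $2-2s>0$, we have $\mathrm{II}\le R^{-(2-2s)}\|\omega\|_1$. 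For $\mathrm{I}$, apply Hölder's inequality with exponents $p$ and $p'$: $\mathrm{I}\le \|\omega\|_p\,\big(\int_{B_R(0)}|z|^{-(2-2s)p'}\,dz\big)^{1/p'}$, and this last integral is finite precisely when $(2-2s)p'<2$, i.e. when $p'<\frac{1}{1-s}$, i.e. when $p>\frac1s$ — which is exactly the hypothesis. (When $p=\infty$ one takes $p'=1$ and the condition $2-2s<2$ is automatic.) Computing in polar coordinates, $\big(\int_{B_R(0)}|z|^{-(2-2s)p'}dz\big)^{1/p'} = C_{s,p}\,R^{\frac{2}{p'}-(2-2s)} = C_{s,p}\,R^{\frac{2}{p'}-2+2s}$, where the exponent is positive.

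Combining, $\|\mathcal{G}_s\omega\|_\infty \le C\big(R^{2s-2}\|\omega\|_1 + R^{\frac2{p'}-2+2s}\|\omega\|_p\big)$ for every $R>0$. To get \eqref{2-1} in the clean form stated, I would simply take $R=1$ (the constant is allowed to depend on $s$ and $p$), which gives $\|\mathcal{G}_s\omega\|_\infty\le C(\|\omega\|_1+\|\omega\|_p)$; alternatively one could optimize in $R$ to balance the two terms, but that is unnecessary for the stated inequality. Finally, for \eqref{2-2}, bound
\begin{equation*}
E(\omega)=\frac12\int_\Pi \omega\,\mathcal{G}_s\omega\,dx \le \frac12\|\mathcal{G}_s\omega\|_\infty\|\omega\|_1 \le C\|\omega\|_1\big(\|\omega\|_1+\|\omega\|_p\big),
\end{equation*}
using $\omega\ge0$ so that $\int \omega\,\mathcal{G}_s\omega \le \|\mathcal{G}_s\omega\|_\infty\|\omega\|_1$.

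The only genuinely delicate point is the choice of exponents in the Hölder step for the near-field integral $\mathrm{I}$: the local integrability of the Riesz-type kernel $|z|^{-(2-2s)}$ against an $L^p$ density requires $(2-2s)p'<2$, and tracking that this is equivalent to the hypothesis $p>1/s$ (with the endpoint $p=\infty$ handled separately) is the crux. Everything else — the pointwise domination of $G_\Pi$ by the free kernel, the tail estimate for $\mathrm{II}$, and the passage from the $L^\infty$ bound on $\mathcal{G}_s\omega$ to the energy bound — is routine. I would also note in passing that the same splitting shows $\mathcal{G}_s\omega$ is in fact continuous and decays at infinity, which will be convenient later, though it is not needed for the statement of Lemma~\ref{lm1}.
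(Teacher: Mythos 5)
Your proposal is correct and follows essentially the same route as the paper: dominate $G_\Pi(x,y)$ by the free kernel $c_{2,s}|x-y|^{-(2-2s)}$, split the integral at a fixed radius, use H\"older on the near-field piece (where $p>1/s$ is exactly what makes $(2-2s)p'<2$) and the $L^1$ norm on the far-field piece, and then deduce \eqref{2-2} directly from \eqref{2-1} using $\omega\ge 0$. The only cosmetic difference is that the paper fixes the splitting radius to $1$ from the start rather than carrying a general $R$.
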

\begin{proof}
	
    Since $\omega\geq 0$ and $p>{1}/{s}$, by H\"older inequality, we have
	\begin{align*}
		0\leq \mathcal{G}_s\omega(x)&\leq \int_\Pi \frac{c_{2,s}}{|x-y|^{2-2s}} \omega(y)dy\\
		&\leq \int_{|x-y|<1}\frac{c_{2,s}}{|x-y|^{2-2s}} \omega(y)dy+\int_{|x-y|\geq 1} \frac{c_{2,s}}{|x-y|^{2-2s}} \omega(y)dy \\
		&\leq c_{2,s}\left(  \int_{|x-y|<1}\frac{1}{|x-y|^{p'(2-2s)}}dy\right)^{\frac{1}{p'}}||\omega||_p+c_{2,s}||\omega||_1\\
		&\leq C(||\omega||_1+||\omega||_p).
	\end{align*}
 This proves \eqref{2-1}. Note that \eqref{2-2} follows from \eqref{2-1} easily by the definition of $E(\omega)$. The proof is thus completed.
\end{proof}

\begin{lemma}\label{lm2}
Suppose that $0<s<1$ and ${1}/{s}<p\leq \infty$. Then for every ${1}/{s}<q\leq p$, there is a positive constant $C$, depending only on $s$ and $q$, such that
if $0\le \omega\in L^1(\Pi)\cap L^p(\Pi)$, then
	\begin{equation}\label{2-3}
		\mathcal{G}_s\omega(x)\leq C\left(x_2^{2s-\frac{2}{q}}||\omega||_q+ x_2^{2s-3}||x_2\omega||_1\right),\ \ \forall\, x\in \Pi.
	\end{equation}
In addition,
	\begin{equation}\label{2-4}
		\mathcal{G}_s\omega(x)\rightarrow 0\, \ \ \text{as} \ \  |x|\rightarrow \infty.
	\end{equation}
\end{lemma}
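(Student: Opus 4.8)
The plan is to first prove a pointwise bound for the kernel $G_\Pi$ that refines the naive estimate $G_\Pi(x,y)\le c_{2,s}|x-y|^{-(2-2s)}$ by a factor measuring the distance of $x$ and $y$ to $\partial\Pi$, and then to split the defining integral for $\mathcal{G}_s\omega(x)$ according to the size of $|x-y|$ relative to $x_2$. For the kernel estimate, fix $x,y\in\Pi$ and write $a:=|x-y|^2$, $b:=|x-\bar y|^2$; an elementary computation gives $b-a=4x_2y_2>0$. Since $t\mapsto t^{-(1-s)}$ is decreasing this already shows $G_\Pi(x,y)\ge0$; applying the mean value theorem to $t\mapsto t^{-(1-s)}$ on $[a,b]$ and using $t\ge a=|x-y|^2$ together with $b-a=4x_2y_2$ yields
\[
0\le G_\Pi(x,y)\le C\,\min\!\left\{\frac{1}{|x-y|^{2-2s}},\ \frac{x_2\,y_2}{|x-y|^{4-2s}}\right\},\qquad x,y\in\Pi,
\]
with $C=C(s)$. (Finiteness of $\mathcal{G}_s\omega$ is already guaranteed by Lemma \ref{lm1}, since $p>1/s$; by interpolation $\omega\in L^q(\Pi)$ as well.)

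To prove \eqref{2-3} I would fix $x\in\Pi$ and write $\mathcal{G}_s\omega(x)=\int_{\{|x-y|<x_2\}}G_\Pi(x,y)\omega(y)\,dy+\int_{\{|x-y|\ge x_2\}}G_\Pi(x,y)\omega(y)\,dy$. On the inner region use $G_\Pi(x,y)\le c_{2,s}|x-y|^{-(2-2s)}$ and H\"older's inequality with exponents $q$ and $q'$: the hypothesis $q>1/s$ is precisely the condition $(2-2s)q'<2$ needed for $\int_{\{|z|<x_2\}}|z|^{-(2-2s)q'}\,dz=Cx_2^{\,2-(2-2s)q'}$ to converge, and raising this to the power $1/q'$ and simplifying via $2/q'=2-2/q$ gives the bound $Cx_2^{\,2s-2/q}\|\omega\|_q$ for the inner integral. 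On the outer region use the second kernel bound together with $|x-y|^{-(4-2s)}\le x_2^{-(4-2s)}$ (valid because $|x-y|\ge x_2$ and $4-2s>0$), which yields $\int_{\{|x-y|\ge x_2\}}G_\Pi(x,y)\omega(y)\,dy\le Cx_2^{\,1-(4-2s)}\int_\Pi y_2\omega(y)\,dy=Cx_2^{\,2s-3}\|x_2\omega\|_1$. Adding the two contributions gives \eqref{2-3}.

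For the decay \eqref{2-4}, the bound \eqref{2-3} does not suffice by itself (its right-hand side blows up as $x_2\to\infty$), so I would argue directly from $\mathcal{G}_s\omega(x)\le c_{2,s}\int_\Pi|x-y|^{-(2-2s)}\omega(y)\,dy$. Pick any finite $q_0\in(1/s,p]$; then $\omega\in L^{q_0}(\Pi)$ since $L^1\cap L^p\subset L^{q_0}$. Given $\varepsilon>0$, choose $R$ so large that $\|\omega\|_{L^1(\Pi\setminus B_R(0))}+\|\omega\|_{L^{q_0}(\Pi\setminus B_R(0))}<\varepsilon$, split $\Pi$ into $\Pi\cap B_R(0)$ and $\Pi\setminus B_R(0)$, and on the second piece split the kernel as $|z|^{-(2-2s)}=|z|^{-(2-2s)}\mathbf{1}_{\{|z|<1\}}+|z|^{-(2-2s)}\mathbf{1}_{\{|z|\ge1\}}$. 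For $|x|\ge 2R$ one has $|x-y|\ge|x|/2$ on $\Pi\cap B_R(0)$, so that part is at most $C|x|^{-(2-2s)}\|\omega\|_1\to0$; on $\Pi\setminus B_R(0)$ the $\mathbf{1}_{\{|z|\ge1\}}$ part is at most $\|\omega\|_{L^1(\Pi\setminus B_R(0))}<\varepsilon$, and the $\mathbf{1}_{\{|z|<1\}}$ part is at most $\big\||z|^{-(2-2s)}\mathbf{1}_{\{|z|<1\}}\big\|_{q_0'}\,\|\omega\|_{L^{q_0}(\Pi\setminus B_R(0))}\le C\varepsilon$ by H\"older (the first factor is finite since $(2-2s)q_0'<2$). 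Letting $|x|\to\infty$ and then $\varepsilon\to0$ gives \eqref{2-4}.

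The only genuinely non-routine point is the improved kernel estimate in the first step — extracting the boundary gain $x_2y_2$ from the difference of the two Riesz kernels via the mean value theorem — together with the exponent bookkeeping in the inner/outer splitting that makes the two powers come out exactly as $x_2^{2s-2/q}$ and $x_2^{2s-3}$. Everything else is H\"older's inequality and elementary integration in polar coordinates, with the constraint $q>1/s$ entering precisely through the local integrability of $|z|^{-(2-2s)q'}$ near the origin.
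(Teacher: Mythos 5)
Your proof is correct and follows essentially the same route as the paper: the mean-value-theorem kernel bound $G_\Pi(x,y)\le C\,x_2y_2|x-y|^{-(4-2s)}$, a splitting of the integral at scale $|x-y|\sim x_2$ with H\"older on the inner piece, and a ball/tail decomposition with H\"older for the decay \eqref{2-4}. The only cosmetic differences are the cutoff radius ($x_2$ versus the paper's $2x_2$) and that for \eqref{2-4} you work with a finite exponent $q_0\in(1/s,p]$ rather than $p$ itself, which in fact treats the case $p=\infty$ slightly more carefully than the paper's displayed estimate.
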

\begin{proof}
	Notice that, by the mean value theorem, there holds
\begin{equation*}
  G_\Pi(x,y)\leq \frac{4x_2y_2}{|x-y|^{4-2s}}, \ \ x, y\in \Pi.
\end{equation*}
Therefore, we have
	\begin{align*}
		\mathcal{G}_s\omega(x)&=\int_\Pi G_\Pi(x,y)\omega(y) dy\\
		&\leq \int_{|x-y|<2x_2} \frac{c_{2,s}}{|x-y|^{2-2s}} \omega(y) dy + \int_{\Pi\cap\{|x-y|>2x_2\}} \frac{4x_2y_2}{|x-y|^{4-2s}} \omega(y) dy\\
		&\leq C\left(x_2^{2s-\frac{2}{q}}\|\omega\|_q+ x_2^{2s-3}\|x_2\omega\|_1\right),
	\end{align*}
	which proves \eqref{2-3}.
	
	For $|x|$ large,  by \eqref{2-1}, we derive
	\begin{align*}
		0\leq \mathcal{G}_s\omega(x)&=\int_\Pi G_\Pi(x,y) \omega(y)dy\\
		&\leq \int_{|y|<\frac{|x|}{2}}\frac{c_{2,s}}{|x-y|^{2-2s}} \omega(y)dy+\int_{\Pi} G_\Pi(x,y) \omega(y)1_{\Pi\setminus B_{\frac{|x|}{2}}(0)}dy \\
		&\leq C\left(\frac{1}{|x|^{2-2s}} \|\omega\|_1 +\|\omega1_{\Pi\setminus B_{\frac{|x|}{2}}(0)}\|_1+\|\omega1_{\Pi\setminus B_{\frac{|x|}{2}}(0)}\|_p\right)\\
		&=o(1),
	\end{align*}
	which is \eqref{2-4} and completes the proof of Lemma \ref{lm2}.
\end{proof}

Since the energy $\mathcal{E}$ is invariant under translations in $x_1$-direction, to control maximizers, we need to take the Steiner symmetrization in $x_1$-variable. For a given nonnegative function $\omega$, we shall say that $\omega$ is Steiner symmetric if
\begin{equation}\label{2-5}
	\omega(x_1,x_2)=\omega(-x_1,x_2) \ \ \ \text{and}\ \ \ \omega \,\,\text{is non-increasing in }\,x_1\, \text{for}\,x_1>0.	
\end{equation}
We have the following result, which can be found in \cite{AC19, B88, T}.
\begin{lemma}[Steiner symmetrization]\label{lm3}
Suppose that $0<s<1$ and ${1}/{s}<p\leq \infty$. For $\omega\geq0$ satisfying $\omega\in L^1(\Pi)\cap L^p(\Pi)$ and $x_2\omega \in L^1(\Pi)$, there exists $\omega^*\geq 0$ satisfying \eqref{2-5} and
	\begin{align}\label{2-6}
		\begin{cases}
		\|\omega^*\|_q=||\omega||_q,\ \ \forall\,1\leq q\leq p,\\
		\|x_2\omega^*\|_1=\|x_2\omega\|_1,\\
		E(\omega^*)\geq E(\omega).
		\end{cases}
	\end{align}
\end{lemma}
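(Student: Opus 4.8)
The statement to prove is Lemma~\ref{lm3}, the Steiner symmetrization lemma. The plan is to build $\omega^*$ as the Steiner symmetrization of $\omega$ in the $x_1$-variable, i.e.\ for each fixed $x_2>0$ we replace the superlevel sets $\{x_1 : \omega(x_1,x_2)>t\}$ (slices at height $x_2$) by symmetric intervals of the same one-dimensional Lebesgue measure, centred at $x_1=0$. Concretely, $\omega^*(x_1,x_2) := \sup\{t\ge 0 : 2|x_1| < \text{meas}_1(\{y_1 : \omega(y_1,x_2)>t\})\}$. This is the standard layer-cake construction applied slicewise, and it immediately produces a function satisfying \eqref{2-5}: symmetry in $x_1$ and monotonicity in $|x_1|$ are built in.

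First I would record the elementary properties of slicewise rearrangement. Since rearrangement preserves the distribution function of each slice $x_1 \mapsto \omega(x_1,x_2)$, Cavalieri/Fubini gives $\int_{\mathbb{R}} \omega^*(x_1,x_2)^q\,dx_1 = \int_{\mathbb{R}} \omega(x_1,x_2)^q\,dx_1$ for every $x_2$ and every $1\le q\le p$ (and the $L^\infty$ bound is preserved slicewise when $p=\infty$); integrating in $x_2$ gives $\|\omega^*\|_q = \|\omega\|_q$. Likewise $\int_{\mathbb{R}}\omega^*(x_1,x_2)\,dx_1 = \int_{\mathbb{R}}\omega(x_1,x_2)\,dx_1$, so multiplying by $x_2$ and integrating gives $\|x_2\omega^*\|_1 = \|x_2\omega\|_1$. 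These are the first two lines of \eqref{2-6} and are routine once the slicewise-equimeasurability is noted.

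The substantive point is $E(\omega^*)\ge E(\omega)$. Here I would write
\[
E(\omega) = \frac{1}{2}\int_\Pi\int_\Pi G_\Pi(x,y)\,\omega(x)\,\omega(y)\,dx\,dy,
\]
and observe that $G_\Pi(x,y) = c_{2,s}\big(|x-y|^{2s-2} - |x-\bar y|^{2s-2}\big)$ with $\bar y = (-y_1,y_2)$, so that with $x=(x_1,x_2)$, $y=(y_1,y_2)$ the kernel depends on the $x_1,y_1$ variables only through $|x_1-y_1|$ and $|x_1+y_1|$ and is, for each fixed pair $(x_2,y_2)$, a function of the form $k(|x_1-y_1|;x_2,y_2) - k(|x_1+y_1|;x_2,y_2)$ where $k(r;x_2,y_2) = c_{2,s}\big(r^2+(x_2-y_2)^2\big)^{s-1}$ is a strictly decreasing function of $r^2$. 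A function of $x_1-y_1$ that is symmetric and strictly decreasing in $|x_1-y_1|$, minus a function of $x_1+y_1$ symmetric and decreasing in $|x_1+y_1|$ --- this is exactly the structure for which the Riesz rearrangement inequality (in its one-dimensional three-function form, applied slicewise, then the two-slice version) yields that the bilinear form increases under Steiner symmetrization. Formally: for fixed $x_2,y_2$, the two-dimensional Riesz inequality on $\mathbb{R}_{x_1}\times\mathbb{R}_{y_1}$ with the single kernel $k(|x_1-y_1|;x_2,y_2)$ gives $\int\int k(|x_1-y_1|)\omega(x_1,x_2)\omega(y_1,y_2)\,dx_1dy_1 \le \int\int k(|x_1-y_1|)\omega^*(x_1,x_2)\omega^*(y_1,y_2)\,dx_1dy_1$ because $k$ is its own symmetric-decreasing rearrangement; for the subtracted piece, the reflection $\bar y$ converts $k(|x_1+y_1|)$ into a convolution-type kernel after the change $y_1\mapsto -y_1$, which simply swaps $\omega(y_1,y_2)$ for $\omega(-y_1,y_2)$ whose rearrangement is the \emph{same} $\omega^*(\cdot,y_2)$, and the inequality goes the right way because this term enters with a minus sign and one uses that the negative kernel $-k(|x_1+y_1|)$ has symmetric-\emph{increasing} profile, so rearrangement of the two densities can only increase $\int\int(-k(|x_1+y_1|))\omega\,\omega$. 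Then integrate in $x_2,y_2$ over $\mathbb{R}_+^2$. The main obstacle is precisely this step: one must be careful that $0<s<1$ makes the exponent $s-1$ negative so $k$ is genuinely decreasing (good), and that the half-plane Green's function $G_\Pi\ge 0$ with the sign bookkeeping on the reflected term is handled correctly so that Riesz's inequality is applied to each piece with the correct monotonicity; integrability of the double integral is guaranteed by Lemma~\ref{lm1} (so $E(\omega)<\infty$) and by equimeasurability $E(\omega^*)$ is controlled the same way, legitimizing Fubini. I would cite \cite{AC19, B88, T} for the statement as the lemma text already does, and present the Riesz-rearrangement argument only in outline, since it is standard; the $x_1$-symmetry and monotonicity of $\omega^*$ together with the three displayed identities then complete the proof.
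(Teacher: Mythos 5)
The construction of $\omega^*$ by slicewise symmetric-decreasing rearrangement in $x_1$ and the verification of the first two lines of \eqref{2-6} are fine (and the paper itself gives no proof, citing \cite{AC19, B88, T}). The problem is in the only substantive step, $E(\omega^*)\geq E(\omega)$, and it begins with the kernel. You read $\bar y=(-y_1,y_2)$ literally from the line before \eqref{1-9}, but that formula is a typo: the verbal description (``reflection in the $x_1$-axis''), the requirement that $G_\Pi$ vanish on $\partial\Pi=\{x_2=0\}$, and the bound $G_\Pi(x,y)\leq 4x_2y_2|x-y|^{2s-4}$ used in Lemma \ref{lm2} and throughout all force $\bar y=(y_1,-y_2)$. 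Hence $G_\Pi$ depends on $x_1,y_1$ only through $|x_1-y_1|$; there is no $|x_1+y_1|$ term. This is not cosmetic: with your reading the kernel vanishes on $\{x_1=0\}$, so pushing mass toward that line is energetically unfavourable and Steiner symmetrization about $x_1=0$ would tend to \emph{decrease} the energy, i.e.\ the decomposition you work with cannot support the claimed inequality.

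Moreover, even granting your splitting $k(|x_1-y_1|)-k(|x_1+y_1|)$, the treatment of the subtracted piece is backwards. After $y_1\mapsto -y_1$, the term $\iint k(|x_1+y_1|)\,\omega\,\omega\,dx_1dy_1$ is a standard Riesz configuration with the symmetric-decreasing kernel $k(|x_1-\cdot|)$ and densities whose rearrangements are again the slices of $\omega^*$; Riesz therefore gives $\iint k(|x_1+y_1|)\omega\omega\leq\iint k(|x_1+y_1|)\omega^*\omega^*$, so rearrangement \emph{increases} this term too and decreases $\iint(-k(|x_1+y_1|))\omega\omega$. There is no ``reverse Riesz'' for symmetric-increasing kernels in the direction you assert: for a bounded symmetric-increasing $K=M-k$ the bilinear form can only decrease under rearrangement, since $M\|u\|_1\|v\|_1$ is conserved while the $k$-part increases. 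In your scheme the two pieces thus move in competing directions and nothing follows. The repair is simpler than your argument: with the correct image point, for each fixed $(x_2,y_2)$ the full kernel $c_{2,s}\big[\big((x_1-y_1)^2+(x_2-y_2)^2\big)^{s-1}-\big((x_1-y_1)^2+(x_2+y_2)^2\big)^{s-1}\big]$ is nonnegative and nonincreasing in $|x_1-y_1|$ (its $r$-derivative equals $2r(s-1)\big[(r^2+(x_2-y_2)^2)^{s-2}-(r^2+(x_2+y_2)^2)^{s-2}\big]\leq 0$ because $s<1$), so a single slicewise application of the one-dimensional Riesz rearrangement inequality, followed by integration in $(x_2,y_2)$ and the finiteness provided by Lemma \ref{lm1}, yields $E(\omega^*)\geq E(\omega)$ with no sign bookkeeping; this is the standard argument behind the references cited for the lemma.
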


For a Steiner symmetric function, we have the following estimate:
\begin{lemma}\label{lm4}
Suppose that $0<s<1$ and ${1}/{s}<p\leq \infty$. Then there is a constant $C>0$, depending only on $s$ and $p$, such that
\begin{equation}\label{2-7}	
		\mathcal{G}_s\omega(x)\leq C\left(|x_1|^{-\frac{1}{2}}\|\omega\|_1+ |x_1|^{-\frac{1}{2p}}\|\omega\|_p+ \frac{x_2}{|x_1|^{2-s}}\|x_2\omega\|_1\right),\ \ x\in \Pi,
	\end{equation}
for all nonnegative $\omega\in L^1(\Pi)\cap L^p(\Pi)$ that are Steiner symmetric.
\end{lemma}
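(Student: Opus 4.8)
The plan is to split the region of integration in the integral defining $\mathcal{G}_s\omega(x)$ according to the relative size of $|x_1|$, $|x_2|$ and $|x-y|$, and then exploit the Steiner symmetry \eqref{2-5} to convert the decay in $x_1$ into an integrable gain. Fix $x=(x_1,x_2)\in\Pi$ and write $R=|x_1|$; by the symmetry $\omega(x_1,x_2)=\omega(-x_1,x_2)$ we may assume $x_1>0$. Recall from Lemma \ref{lm2} that $G_\Pi(x,y)\le c_{2,s}|x-y|^{-(2-2s)}$ and also $G_\Pi(x,y)\le 4x_2y_2|x-y|^{-(4-2s)}$. The first key step is to handle the ``near'' region $\{y\in\Pi:|x-y|<R/2\}$. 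On this set $y_1>R/2$, so the Steiner monotonicity of $\omega$ in $x_1$ gives the pointwise bound $\omega(y)\le \omega(z)$ for any $z=(z_1,y_2)$ with $|z_1|\le y_1$; integrating this inequality in $z_1$ over $(-R/2,R/2)$ yields the crucial one-dimensional estimate
\begin{equation*}
	\omega(y)\le \frac{1}{R}\int_{-R/2}^{R/2}\omega(z_1,y_2)\,dz_1 \le \frac{1}{R}\int_{\mathbb{R}}\omega(z_1,y_2)\,dz_1 ,\qquad |x-y|<R/2 .
\end{equation*}
Using this together with $G_\Pi(x,y)\le c_{2,s}|x-y|^{-(2-2s)}$ and then integrating in $y$, and applying H\"older in the slice variable with exponents $p,p'$ (as in the proof of Lemma \ref{lm1}), one bounds the near contribution by $C\,R^{-1/2}\|\omega\|_1 + C\,R^{-1/(2p)}\|\omega\|_p$; the precise powers $-1/2$ and $-1/(2p)$ come from distributing the factor $R^{-1}$ from the one-dimensional average against the integrable singularity $|x-y|^{-(2-2s)}$ in two dimensions (one has $2-2s<2$, and $R^{-1}\cdot R^{\,2-(2-2s)} = R^{2s-1}$, which is then interpolated with the $\|\omega\|_1$ and $\|\omega\|_p$ pieces after a further H\"older step — this bookkeeping is where I expect the only real friction).

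For the ``far'' region $\{y\in\Pi:|x-y|\ge R/2\}$, I would instead use the stronger bound $G_\Pi(x,y)\le 4x_2y_2|x-y|^{-(4-2s)}\le C\,x_2 y_2 R^{-(2-s)}|x-y|^{-(2-s)}$ for a suitable split of the exponent (writing $|x-y|^{-(4-2s)}=|x-y|^{-(2-s)}|x-y|^{-(2-s)}$ and estimating one factor by $R^{-(2-s)}$). Since $\int_{|x-y|\ge R/2}|x-y|^{-(2-s)}y_2\,\omega(y)\,dy$ is controlled by $\|x_2\omega\|_1$ up to the local contribution near $y=x$, which is itself absorbed into the near estimate, this produces the third term $C\,x_2 R^{-(2-s)}\|x_2\omega\|_1$. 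Combining the near and far bounds gives \eqref{2-7}. The main obstacle is purely the exponent accounting in the near region: one must verify that after the one-dimensional averaging trick and the H\"older split, the surviving powers of $R=|x_1|$ are exactly $-1/2$ on $\|\omega\|_1$ and $-1/(2p)$ on $\|\omega\|_p$ uniformly in $x_2$, which forces a careful choice of how the leftover factor of $|x-y|$ is partitioned between the two norms; everything else is a routine application of H\"older's inequality and the two elementary bounds on $G_\Pi$ already recorded in Lemma \ref{lm2}.
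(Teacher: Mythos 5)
There is a genuine gap, and it sits exactly where you flagged it: the near-region bookkeeping is not just friction, it is the whole content of the lemma, and the mechanism you propose does not produce the stated powers. Your pointwise bound $\omega(y)\le \frac{1}{R}\int_{\mathbb{R}}\omega(z_1,y_2)\,dz_1=:\frac{1}{R}f(y_2)$ only gains a factor (horizontal extent)$/R$, so over a ball of radius $R/2$ it gains nothing globally; the gain must come from the kernel, and if you carry out your computation you find it is insufficient. Concretely, for $s>1/2$ one has $\int_{|y_1-x_1|<R/2}\big((y_1-x_1)^2+(y_2-x_2)^2\big)^{-(1-s)}dy_1\le CR^{2s-1}$ uniformly in $y_2$, so your near term is bounded by $\frac{C}{R}\,R^{2s-1}\|f\|_{L^1}=CR^{2s-2}\|\omega\|_1$, which is \emph{weaker} than the required $R^{-1/2}\|\omega\|_1$ whenever $s>3/4$; inserting an inner cutoff at radius $\rho$ and using H\"older there does not help, because the $R^{2s-1}$ contribution comes from $|y_1-x_1|\sim R$, not from the singularity. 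For $s\le 1/2$ the $y_1$-integral instead behaves like $|y_2-x_2|^{2s-1}$, and $\int f(y_2)|y_2-x_2|^{2s-1}dy_2$ is not controlled by $\|\omega\|_1$ at all (non-integrable singularity at $y_2=x_2$), so an unspecified further use of $\|\omega\|_p$ is needed and the exponents again do not come out as $-1/2$ and $-1/(2p)$. The far region in your split is harmless (for $|x_1|\ge 1$ it even gives $x_2|x_1|^{-(4-2s)}\|x_2\omega\|_1$, stronger than needed, and small $|x_1|$ is trivial by Lemma \ref{lm1}), so the failure is entirely in the near region.

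The missing idea, which is the paper's proof, is to decompose not at scale $|x_1|$ but by the \emph{strip} $\{|y_1-x_1|<\sqrt{|x_1|}\}$. Writing $\omega_1=\omega 1_{\{|y_1-x_1|<\sqrt{|x_1|}\}}$ and $\omega_2=\omega-\omega_1$, Steiner monotonicity (Burton's inequality $\int_a^{a+w}g\le \frac{w}{a}\int_0^a g$ for nonincreasing $g$, applied to each slice of $\omega$ and of $\omega^p$) gives $\|\omega_1\|_1\le C|x_1|^{-1/2}\|\omega\|_1$ and $\|\omega_1\|_p\le C|x_1|^{-1/(2p)}\|\omega\|_p$; then Lemma \ref{lm1} applied to $\omega_1$ yields the first two terms of \eqref{2-7}, while on the support of $\omega_2$ one has $|x-y|\ge\sqrt{|x_1|}$, so the bound $G_\Pi(x,y)\le 4x_2y_2|x-y|^{-(4-2s)}$ gives $\mathcal{G}_s\omega_2(x)\le Cx_2|x_1|^{-(2-s)}\|x_2\omega\|_1$. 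Note that the exponents $-1/2$, $-1/(2p)$ and $-(2-s)$ in the statement are precisely the signature of the width-$\sqrt{|x_1|}$ strip; your decomposition at scale $|x_1|/2$, together with the $L^1$-slice pointwise bound and H\"older, cannot recover them without in effect redoing this strip argument inside your near region (in particular you would need the $x_2y_2$-kernel bound there too, which your near-region treatment never invokes).
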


\begin{proof}
	For $x\in\Pi$ fixed, let
\begin{equation*}
	\omega_1(y)=\left\{
	\begin{array}{lll}
		\omega(y), \ \  & \text{if} \ \ |y_1-x_1|<\sqrt{|x_1|},\\
		0, & \text{if} \ \ |y_1-x_1|\geq\sqrt{|x_1|}.
	\end{array}
	\right.
\end{equation*}
	Using  equation (2.11) in \cite{B88}, it is easy to see that
\begin{equation*}
  \|\omega_1\|_p\leq \left(\frac{|x_1|^{\frac{1}{2}}}{|x_1|}\right)^{\frac{1}{p}}\|\omega\|_p=|x_1|^{-\frac{1}{2p}}\|\omega\|_p.
\end{equation*}
Hence, by \eqref{2-1}, we have
	\begin{equation}\label{2-8}
		\mathcal{G}_s\omega_1(x)\leq C\left(\|\omega_1\|_1+\|\omega_1\|_p\right)\leq C\left(|x_1|^{-\frac{1}{2}}\|\omega_1\|_1+ |x_1|^{-\frac{1}{2p}}\|\omega_1\|_p\right).
	\end{equation}
Letting $\omega_2=\omega-\omega_1$, we have
	\begin{align}\label{2-9}
		|\mathcal{G}_s\omega_2(x)|&= c_{2,s}\int_{|x-y|>\sqrt{|x_1|}} \left(\frac{1}{|x-y|^{2-2s}}-\frac{1}{|x-\bar{y}|^{2-2s}}\right)\omega(y)dy\nonumber\\
		& \leq C\int_{|x-y|>\sqrt{|x_1|}} \frac{x_2y_2}{|x-y|^{4-2s}}\omega(y)dy \\
		&\leq \frac{Cx_2}{|x_1|^{2-s}}\|x_2\omega\|_1,\nonumber
	\end{align}	
    which, together with \eqref{2-8}, gives \eqref{2-7}.

\end{proof}

Now, we prove the existence of maximizer for $\mathcal{E}_\lambda$ over $\mathcal{A}^{s}_{\mu, \nu}$. Set
\begin{equation}\label{2-10}
	\tilde{\omega}(x)=\frac{\lambda^{-\frac{1}{s}}}{\nu}\omega(\lambda^{-\frac{1}{2s}}x)
\end{equation}
 It is easy to see that if $\omega\in \mathcal{A}^s_{\mu,\nu}$, then $\tilde{\omega}(x)\in \mathcal{A}^s_{\mu\nu^{-1}\lambda^{{1}/{2s}},1}$ and $\mathcal{E}_1(\tilde{\omega})=\lambda^{1-\frac{1}{s}}\nu^{-2}\mathcal{E}_\lambda(\omega)$. Thus, without loss of generality, we may assume that $\lambda=\nu=1$.
For simplicity, in what follows, we denote $\mathcal{A}^s_\mu=\mathcal{A}^s_{\mu,1}$, $\Sigma^s_\mu=\Sigma^s_{\mu,1,1}$, $\mathcal{E}(\omega)=E(\omega)-\frac{1}{2}\int_\Pi \omega^2dx$ and  $S^s_{\mu}=\sup_{\omega\in \mathcal{A}^s_{\mu}}\mathcal{E}(\omega)$. In the following, we always assume that $\lambda=\nu=1$.

We consider the cases $0<s\leq {1}/{2}$ and  ${1}/{2}<s<1$ separately.
\subsection{$\mathbf{{1}/{2}<s<1}$} In this subsection, we consider the simpler case ${1}/{2}<s<1$. We need the following lemma concerning maximum value.
\begin{lemma}\label{lm5}
 If $0<\mu<\infty$, then $0<S^s_\mu<\infty$.
\end{lemma}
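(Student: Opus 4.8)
The plan is to establish the two inequalities $0 < S^s_\mu$ and $S^s_\mu < \infty$ separately, working in the normalized setting $\lambda = \nu = 1$ so that $\mathcal{A}^s_\mu = \{\omega \in L^2(\Pi) : \omega \geq 0,\ \int_\Pi x_2\omega\,dx = \mu,\ \int_\Pi \omega\,dx \leq 1\}$ and $\mathcal{E}(\omega) = E(\omega) - \tfrac12\|\omega\|_2^2$. For the upper bound, I would invoke Lemma \ref{lm1} with $p = 2$ (which is admissible since $1/s < 2$ for $s > 1/2$): for any $\omega \in \mathcal{A}^s_\mu$ we have $E(\omega) \leq C\|\omega\|_1(\|\omega\|_1 + \|\omega\|_2) \leq C(1 + \|\omega\|_2)$. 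Then $\mathcal{E}(\omega) \leq C + C\|\omega\|_2 - \tfrac12\|\omega\|_2^2$, and since the function $t \mapsto Ct - \tfrac12 t^2$ is bounded above on $[0,\infty)$, we conclude $\mathcal{E}(\omega) \leq C + C^2/2 =: \overline{C} < \infty$ uniformly over $\mathcal{A}^s_\mu$. Hence $S^s_\mu \leq \overline{C} < \infty$.

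For the lower bound $S^s_\mu > 0$, the point is to exhibit a single admissible function with strictly positive energy functional. I would take a test function of the form $\omega_t = t^{-1}\mathbf{1}_{D_t}$ where $D_t$ is a suitably chosen region in $\Pi$ (for instance a rectangle or disk) with $\operatorname{meas}(D_t)$ scaled so that $\int_\Pi \omega_t\,dx = t^{-1}\operatorname{meas}(D_t)$ is small (in particular $\leq 1$) while $\int_\Pi x_2\omega_t\,dx = \mu$ is fixed — this is arranged by translating $D_t$ far up the $x_2$-axis and choosing its measure appropriately, possibly after a further Steiner-type rescaling. One checks that $\|\omega_t\|_2^2 = t^{-2}\operatorname{meas}(D_t) = t^{-1}\cdot\big(t^{-1}\operatorname{meas}(D_t)\big)$, so the penalization term $\tfrac12\|\omega_t\|_2^2$ tends to $0$ as $t \to \infty$ (since $t^{-1}\operatorname{meas}(D_t)$ stays bounded while $t^{-1} \to 0$). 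Meanwhile $E(\omega_t) = \tfrac12\int_\Pi\int_\Pi G_\Pi(x,y)\omega_t(x)\omega_t(y)\,dx\,dy > 0$ because $G_\Pi$ is strictly positive on $\Pi \times \Pi$ (from \eqref{1-9}, as $|x - \bar y| > |x - y|$ for $x, y \in \Pi$) and $\omega_t \not\equiv 0$; one must verify $E(\omega_t)$ does not decay faster than $\|\omega_t\|_2^2$, which follows by keeping the support of fixed shape and using a lower bound $G_\Pi(x,y) \geq c > 0$ on the (bounded) support region, giving $E(\omega_t) \geq c\big(\int\omega_t\big)^2/2$, a quantity one arranges to stay bounded away from $0$. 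Choosing $t$ large then yields $\mathcal{E}(\omega_t) > 0$, so $S^s_\mu > 0$.

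The main technical obstacle is the bookkeeping in the lower-bound construction: one needs a family of admissible competitors for which the quadratic energy $E$ stays bounded below by a positive constant while the $L^2$ penalty vanishes, all the while respecting both constraints $\int x_2\omega = \mu$ (an equality) and $\int \omega \leq 1$ (an inequality). The cleanest way is to fix a reference profile $\omega_0 = \mathbf{1}_{D}$ supported in a fixed bounded set $D \subset \Pi$ bounded away from $\partial\Pi$, then consider $\omega_\epsilon(x) = \epsilon\,\omega_0\big((x - h_\epsilon\mathbf{e}_2)/\rho_\epsilon\big)$ or similar, tuning the amplitude $\epsilon$, the dilation $\rho_\epsilon$, and the vertical shift $h_\epsilon$ so that the mass constraint and the impulse constraint hold while $\|\omega_\epsilon\|_2^2 \to 0$; the scaling behavior of $E$ under dilation (which scales like $\rho^{4 + (2s - 2)} = \rho^{2 + 2s}$ times amplitude squared) must be compared against that of $\|\cdot\|_2^2$ (which scales like $\rho^2$ times amplitude squared), and one checks the exponents work out in our favor. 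I expect the authors handle this with an explicit simple choice; I would do the same rather than optimize.
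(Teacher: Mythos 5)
Your proposal is correct and follows essentially the same route as the paper: the upper bound is the paper's argument verbatim (Lemma \ref{lm1} with $p=2$ plus absorbing the linear term in $\|\omega\|_2$), and your amplitude--dilation family in the last paragraph, with the exponent comparison $\rho^{2+2s}$ for $E$ against $\rho^{2}$ for the $L^2$ penalty, is exactly the paper's choice $\omega_\tau(x)=\tau^3\omega_1(\tau x)$ with $\omega_1=1_{B_r(0)\cap\Pi}$, which preserves the impulse, shrinks the mass, and gives $\mathcal{E}(\omega_\tau)=\tau^{4-2s}\bigl(E(\omega_1)-\tfrac{\tau^{2s}}{2}\|\omega_1\|_2^2\bigr)>0$ for $\tau$ small. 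The only blemish is the intermediate claim that one can keep $E(\omega_t)\geq c\bigl(\int\omega_t\bigr)^2/2$ bounded away from zero via a uniform lower bound on $G_\Pi$ over the support: with amplitude tending to zero and $\int\omega\le 1$ this forces the support to spread, so no such uniform bound holds; but this claim is not needed, since the scaling comparison alone yields $\mathcal{E}>0$ for one competitor, which is all the lemma requires.
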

\begin{proof}
	By taking $p=2$ in \eqref{2-2} and Young's inequality, we have for $\omega\in L^1(\Pi)\cap L^2(\Pi)$,
\begin{equation*}
  \begin{split}
     \mathcal{E}(\omega) &=E(\omega)-\frac{1}{2}\int_\Pi \omega^2dx \\
       & \le C\|\omega\|_1\left(\|\omega\|_1+\|\omega\|_2\right)-\frac{1}{2}\int_\Pi \omega^2dx \\
       & \le C\|\omega\|_1^2.
  \end{split}
\end{equation*}
	Thus, $S^s_{\mu}<\infty$ due to $\|\omega\|_1\leq 1$ for all $\omega \in \mathcal{A}^s_\mu$. Let $\omega_1:=1_B$ with $B=\Pi\cap B_r(0)$ for some $r>0$  satisfying $\int_\Pi x_2 \omega_1dx=\mu$. Setting $\omega_\tau(x):=\tau^3\omega_1(\tau x)$ for $\tau>0$, then one can check that
	\begin{align*}
		&\int_\Pi x_2 \omega_\tau dx= \int_\Pi x_2 \omega_1 dx=\mu,\\
		&\int_\Pi \omega_\tau dx= \tau\int_\Pi \omega_1dx,\\
		&\mathcal{E}(\omega_\tau)=\tau^{4-2s}\left(E(\omega_1)-\frac{\tau^{2s}}{2}\int_\Pi \omega_1^2dx\right).
	\end{align*}
	Hence, we can choose $\tau$ sufficiently small such that $\omega_\tau\in \mathcal{A}^s_\mu$ and $\mathcal{E}(\omega_\tau)>0$, which achieves the proof.	
\end{proof}
    We are able to prove the existence of maximizers.
\begin{lemma}\label{lm6}
	For $\mu>0$, there exists $\omega_\mu^s \in \mathcal{A}^s_{\mu}$ such that
	$$\mathcal{E}(\omega^s_\mu)=\sup_{\omega\in \mathcal{A}^s_{\mu}} \mathcal{E}(\omega).$$	
\end{lemma}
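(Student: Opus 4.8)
The plan is to use the direct method in the calculus of variations. Let $\{\omega_n\}\subset\mathcal{A}^s_{\mu}$ be a maximizing sequence for $\mathcal{E}$, i.e. $\mathcal{E}(\omega_n)\to S^s_\mu$; by Lemma \ref{lm5} we know $0<S^s_\mu<\infty$, so we may assume $\mathcal{E}(\omega_n)\ge S^s_\mu/2>0$ for all $n$. Since $0\le\omega_n$, $\|\omega_n\|_1\le 1$ and $\int_\Pi x_2\omega_n\,dx=\mu$, and since $\mathcal{E}(\omega_n)=E(\omega_n)-\frac12\|\omega_n\|_2^2\le S^s_\mu$ forces $\frac12\|\omega_n\|_2^2\le E(\omega_n)\le C\|\omega_n\|_1(\|\omega_n\|_1+\|\omega_n\|_2)\le C(1+\|\omega_n\|_2)$ by \eqref{2-2} with $p=2$, a Young-inequality argument shows $\|\omega_n\|_2$ is uniformly bounded. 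Hence $\{\omega_n\}$ is bounded in $L^2(\Pi)$, and after passing to a subsequence $\omega_n\rightharpoonup\omega$ weakly in $L^2(\Pi)$ for some $\omega\ge 0$.

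The main obstacle is the loss of compactness: $\Pi$ is unbounded, so weak $L^2$ convergence alone does not preserve the impulse constraint $\int x_2\omega\,dx=\mu$ (mass can escape to infinity in $x_1$ or leak toward $x_2=\infty$), nor does it obviously pass the quadratic energy $E$ to the limit. To handle this I would first replace $\omega_n$ by its Steiner symmetrization $\omega_n^*$ in the $x_1$-variable: by Lemma \ref{lm3} this preserves $\|\cdot\|_q$ for $1\le q\le 2$ and $\|x_2\cdot\|_1$, and does not decrease $E$, hence does not decrease $\mathcal{E}$; so $\{\omega_n^*\}$ is still a maximizing sequence, lies in $\mathcal{A}^s_\mu$, and is Steiner symmetric. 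Now Lemma \ref{lm4} gives the pointwise decay $\mathcal{G}_s\omega_n^*(x)\le C(|x_1|^{-1/2}+|x_1|^{-1/4}+x_2|x_1|^{-(2-s)})$ with constants depending only on the bounds $\|\omega_n^*\|_1\le 1$, $\|\omega_n^*\|_2\le C$, $\|x_2\omega_n^*\|_1=\mu$, while Lemma \ref{lm2}, estimate \eqref{2-3}, gives $\mathcal{G}_s\omega_n^*(x)\le C(x_2^{2s-1}+x_2^{2s-3}\mu)$ (taking $q=2$), which is small when $x_2$ is small. Combining these two bounds shows that the contribution to $E(\omega_n^*)=\frac12\int\omega_n^*\,\mathcal{G}_s\omega_n^*$ coming from the region $\{|x_1|>R\}\cup\{x_2<\delta\}\cup\{x_2>R\}$ tends to $0$ uniformly in $n$ as $R\to\infty$, $\delta\to0$ (for the region $x_2$ large one uses $\int_{x_2>R}x_2\omega_n^*=\mu$ together with $\mathcal{G}_s\omega_n^*\le C(\|\omega_n^*\|_1+\|\omega_n^*\|_2)$ bounded). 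Thus the energy is concentrated, up to arbitrarily small error, in a fixed box $Q_{R,\delta}=\{|x_1|\le R,\ \delta\le x_2\le R\}$.

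On such a fixed bounded box, the operator $\omega\mapsto\mathcal{G}_s\omega$ restricted to $L^2(Q_{R,\delta})$ is compact (the kernel $G_\Pi(x,y)$ is in $L^2(Q_{R,\delta}\times Q_{R,\delta})$ when $s>1/2$ since $|x-y|^{-(2-2s)}\in L^2_{loc}$, so it is a Hilbert–Schmidt, hence compact, operator), so $\mathcal{G}_s(\omega_n^*\mathbf 1_{Q_{R,\delta}})\to\mathcal{G}_s(\omega\mathbf 1_{Q_{R,\delta}})$ strongly in $L^2$, and together with $\omega_n^*\rightharpoonup\omega$ weakly this yields $\int_{Q_{R,\delta}}\omega_n^*\,\mathcal{G}_s\omega_n^*\to\int_{Q_{R,\delta}}\omega\,\mathcal{G}_s\omega$ up to an error controlled by cross terms that are again uniformly small. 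Letting $R\to\infty$, $\delta\to0$ after $n\to\infty$ gives $E(\omega_n^*)\to E(\omega)$. For the constraints: weak $L^2$ convergence gives $\int_\Pi\omega\,dx\le\liminf\int_\Pi\omega_n^*\,dx\le\nu$; the uniform tightness in $x_2$ just established (no mass concentrating near $x_2=0$ or escaping to $x_2=\infty$ — here I would note separately that $\int_{x_2<\delta}x_2\omega_n^*\le\delta$ and $\int_{x_2>R}x_2\omega_n^*\to0$ uniformly, which one gets from the same energy-localization analysis combined with the fact that mass at small $x_2$ contributes negligibly to $\mathcal{E}$) together with weak convergence forces $\int_\Pi x_2\omega\,dx=\mu$. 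By weak lower semicontinuity of $\|\cdot\|_2^2$, $\int\omega^2\le\liminf\int(\omega_n^*)^2$, hence $\mathcal{E}(\omega)\ge\limsup\mathcal{E}(\omega_n^*)=S^s_\mu$. Since $\omega\in\mathcal{A}^s_\mu$ we get $\mathcal{E}(\omega)=S^s_\mu$, so $\omega^s_\mu:=\omega$ is the desired maximizer. The delicate point to get right is the $x_2$-tightness, i.e. ruling out mass concentrating on the boundary $\{x_2=0\}$ where $G_\Pi$ vanishes; I expect this to require carefully combining \eqref{2-3} with the strict positivity $S^s_\mu>0$, exactly as in the analogous Euler arguments of \cite{AC19, B88, T}.
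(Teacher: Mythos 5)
Your setup---the uniform $L^2$ bound via \eqref{2-2} and Young's inequality, Steiner symmetrization, weak $L^2$ convergence, and the convergence $E(\omega_n^*)\to E(\omega)$ obtained by splitting the quadratic form into a bounded box (where $G_\Pi\in L^{2}_{loc}(\overline{\Pi}\times\overline{\Pi})$ lets the bilinear term pass to the weak limit) plus tail regions controlled by Lemmas \ref{lm1} and \ref{lm4} and the impulse constraint---is essentially the paper's argument and is fine. The genuine gap is the final step, where you claim $\int_\Pi x_2\omega\,dx=\mu$ because of a ``uniform tightness in $x_2$'' derived from the energy localization. Smallness of the energy contribution of a region does not control the impulse carried by that region: mass of size $\epsilon$ placed at height of order $\mu/\epsilon$, or spread thinly over $|x_1|\le n$, carries a fixed amount of impulse while contributing negligibly to $E$ and to $\|\cdot\|_2^2$, precisely because $G_\Pi$ is small there. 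So the assertion that $\int_{x_2>R}x_2\omega_n^*\,dx\to0$ uniformly in $n$ (and, likewise, that no impulse vanishes horizontally) is unjustified; ruling out such impulse loss directly is essentially the concentration--compactness analysis the paper only performs later, in Section 4. A minor additional slip: \eqref{2-3} is not small for small $x_2$, since the term $x_2^{2s-3}\|x_2\omega\|_1$ blows up as $x_2\to0$; this is harmless only because the strip near $x_2=0$ need not be removed from the box at all.

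The paper closes this step differently and more cheaply. It records only that the weak limit satisfies $\int_\Pi x_2\omega\,dx\le\mu$ and $\int_\Pi\omega\,dx\le1$, deduces $\mathcal{E}(\omega)\ge S^s_\mu$ from the energy convergence and weak lower semicontinuity of $\|\cdot\|_2^2$, and then excludes $\int_\Pi x_2\omega\,dx<\mu$ by a translation argument: since $S^s_\mu>0$ forces $\omega\not\equiv0$, one may set $\omega_\tau(x)=\omega(x_1,x_2-\tau)$ for $x_2>\tau$ (and $0$ otherwise) with $\tau>0$ chosen so that $\int_\Pi x_2\omega_\tau\,dx=\mu$; this preserves the $L^1$ and $L^2$ norms and strictly increases $E$, because $G_\Pi(x,y)$ strictly increases when both points move away from $\partial\Pi$, giving $S^s_\mu=\mathcal{E}(\omega)<\mathcal{E}(\omega_\tau)\le S^s_\mu$, a contradiction. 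You need this device (or some substitute) to conclude; as written, the membership $\omega\in\mathcal{A}^s_\mu$ is not established.
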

\begin{proof}
Let $\{\omega_j\}_{j=1}^\infty\subset \mathcal{A}^s_\mu$ be a maximizing sequence. By the definition of $\mathcal{E}$ and \eqref{2-2}, we have $$\|\omega_j\|_2^2=2E(\omega_j)-2\mathcal{E}(\omega_j)\leq C\|\omega_j\|_1(||\omega_j\|_1+\|\omega_j\|_2)-\mathcal{E}(\omega_j),$$ which, by Young's inequality,  implies
	$$\|\omega_j\|_2^2\leq C-\mathcal{E}(\omega_j)\leq C-S^s_\mu\leq C.$$
	Thus, $\{\omega_j\}_{j=1}^\infty$ is uniformly bounded in $L^2(\Pi)$.
	
	In view of Lemma \ref{lm3}, we may assume that $\omega_j$ is Steiner symmetric by replacing $\omega_j$ with its Steiner symmetrization. Since $\{\omega_j\}_{j=1}^\infty$ is uniformly bounded in $L^2(\Pi)$, passing to a subsequence (still denoted by $\{\omega_j\}_{j=1}^\infty$), we may assume
	$\omega_j \rightarrow \omega$ weakly in $L^2(\Pi)$ as $j\to \infty$. It is easy to verify that $$\int_\Pi x_2\omega dx\leq \mu \ \ \text{and}\ \ \int_\Pi \omega dx\leq 1.$$ We are going to show the convergence of energy.
	
	Indeed, on the one hand, by Lemmas \ref{lm1} and \ref{lm4}, we have
	\begin{align*}
	 &2E(\omega_j)=\int_\Pi\int_\Pi \omega_j(x){G}_\Pi(x,y)\omega_j(y)dxdy\\
		&\leq \int_{|x_1|<R, 0<x_2<R} \int_{|y_1|<R, 0<y_2<R}\omega_j(x){G}_\Pi(x,y)\omega_j(y)dxdy\\
        &\ \ \ \ \ \ \ \ \ \ \ \ \ \ \  + 2\int_{x_2\geq R} \omega_j(x)\mathcal{G}_s\omega_j(x)dx+ 2\int_{|x_1|\geq R} \omega_j(x)\mathcal{G}_s\omega_j(x)dx\\
		&\leq  \int_{|x_1|<R, 0<x_2<R} \int_{|y_1|<R, 0<y_2<R}\omega_j(x){G}_\Pi(x,y)\omega_j(y)dxdy+2R^{-1}\|\mathcal{G}_s\omega_j\|_\infty\|x_2\omega_j\|_1\\
		&\ \ \ \ \ \ \ \ \ \ \ \ \ \ \    + C\left(R^{-\frac{1}{2}}\|\omega_j\|_1^2+R^{-\frac{1}{4}}\|\omega_j\|_2\|\omega_j\|_1+R^{s-2}\|x_2\omega_j\|_1^2\right)\\
        &\leq  \int_{|x_1|<R, 0<x_2<R} \int_{|y_1|<R, 0<y_2<R}\omega_j(x){G}_\Pi(x,y)\omega_j(y)dxdy+CR^{-1}\left(\|\omega_j\|_1+\|\omega_j\|_2\right)\|x_2\omega_j\|_1\\
        &\ \ \ \ \ \ \ \ \ \ \ \ \ \ \    + C\left(R^{-\frac{1}{2}}\|\omega_j\|_1^2+R^{-\frac{1}{4}}\|\omega_j\|_2\|\omega_j\|_1+R^{s-2}\|x_2\omega_j\|_1^2\right).\\
	\end{align*}
Observing that $G_\Pi(x,y)\in L_{loc}^{2}(\overline{\Pi}\times\overline{\Pi})$, we obtain
\begin{equation*}
  \limsup _{j\rightarrow \infty}E(\omega_j)\leq E(\omega)
\end{equation*}
by first letting $j\rightarrow \infty$ and then $R\rightarrow \infty$.
	
	On the other hand, one has
	$$	2E(\omega_j)=\int_\Pi \omega_j(x)\mathcal{G}_s\omega_j(x)dx\geq \int_{|x_1|<R, 0<x_2<R} \int_{|y_1|<R, 0<y_2<R}\omega_j(x)G_\Pi(x,y)\omega_j(y)dxdy,$$
	which implies
\begin{equation*}
  \liminf _{j\rightarrow \infty}E(\omega_j)\geq E(\omega)
\end{equation*}
by first letting $j\rightarrow \infty$ and then $R\rightarrow \infty$.

Therefore, we conclude that $$\lim _{j\rightarrow \infty }E(\omega_j)= E(\omega).$$ Hence, we have $$\mathcal{E}(\omega)= E(\omega)-\frac{1}{2}\int_\Pi \omega^2dx\geq \lim _{j\rightarrow \infty}E(\omega_j)-\liminf _{j\rightarrow \infty}\frac{1}{2}\int_\Pi \omega_j^2dx=S_\mu^s.$$

We now claim that $\int_\Pi x_2\omega dx=\mu$. Indeed, suppose not, then there exists some $\tau>0$ such that the function
\begin{equation*}
	\omega_\tau(x_1, x_2):=\left\{
	\begin{array}{lll}
		\omega(x_1,x_2-\tau), \ \  & \text{if} \ \ x_2>\tau,\\
		0, & \text{if} \ \ x_2\le\tau.
	\end{array}
	\right.
\end{equation*}
belongs to $\mathcal{A}_\mu^s$. A simple calculation then yields that
\begin{equation*}
  S^s_\mu=\mathcal{E}(\omega)<\mathcal{E}(\omega_\tau)\le S^s_\mu.
\end{equation*}
This is a contradiction. The proof is thus complete.	
\end{proof}

From the proof of Lemma \ref{lm6}, we can also obtain the following result.
\begin{lemma}\label{lmadd1}
If $0<\mu_1<\mu_2<\infty$, then $S^s_{\mu_1}<S^s_{\mu_2}$.
\end{lemma}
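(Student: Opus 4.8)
The plan is to derive the strict monotonicity $S^s_{\mu_1}<S^s_{\mu_2}$ for $0<\mu_1<\mu_2<\infty$ by a direct scaling/perturbation argument, in the spirit of the last paragraph of the proof of Lemma \ref{lm6}. First I would invoke Lemma \ref{lm6} to fix a maximizer $\omega_1\in\mathcal{A}^s_{\mu_1}$ with $\mathcal{E}(\omega_1)=S^s_{\mu_1}$, which we may assume (via Lemma \ref{lm3}) to be Steiner symmetric. Since $\mu_1<\mu_2$, I would produce a competitor in $\mathcal{A}^s_{\mu_2}$ by a vertical shift: for $\tau>0$ set
\begin{equation*}
	\omega_\tau(x_1,x_2):=\begin{cases}\omega_1(x_1,x_2-\tau), & x_2>\tau,\\ 0, & 0<x_2\le\tau.\end{cases}
\end{equation*}
Then $\|\omega_\tau\|_q=\|\omega_1\|_q$ for all relevant $q$ (so in particular $\int_\Pi\omega_\tau\,dx=\int_\Pi\omega_1\,dx\le 1$ and, when $1/2<s<1$, the $L^2$ membership is preserved), while $\int_\Pi x_2\omega_\tau\,dx=\int_\Pi x_2\omega_1\,dx+\tau\int_\Pi\omega_1\,dx=\mu_1+\tau\|\omega_1\|_1$.

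The next step is to choose $\tau$ so that $\omega_\tau\in\mathcal{A}^s_{\mu_2}$. If $\|\omega_1\|_1>0$ (which must hold, since otherwise $\omega_1\equiv 0$ and $S^s_{\mu_1}=0$, contradicting $S^s_{\mu_1}>0$ from Lemma \ref{lm5} — and in the case $0<s\le 1/2$ the analogous positivity should be recorded), then $\tau:=(\mu_2-\mu_1)/\|\omega_1\|_1>0$ gives exactly $\int_\Pi x_2\omega_\tau\,dx=\mu_2$, and all other constraints are untouched, so $\omega_\tau\in\mathcal{A}^s_{\mu_2}$. The key point is then that the shift strictly increases the kinetic energy: $E(\omega_\tau)>E(\omega_1)$. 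This follows because, writing the energy as a double integral against $G_\Pi(x,y)=c_{2,s}\big(|x-y|^{2s-2}-|x-\bar y|^{2s-2}\big)$, a vertical translation by $\tau$ of both arguments leaves $|x-y|$ unchanged but strictly increases $|x-\bar y|$ (the distance between a point and the reflection of another point across the $x_1$-axis grows as both points move away from that axis), hence strictly increases $G_\Pi$ on the support, and $\omega_1\ge 0$ is not identically zero. The quadratic term is invariant: $\int_\Pi\omega_\tau^2\,dx=\int_\Pi\omega_1^2\,dx$. Therefore
\begin{equation*}
	S^s_{\mu_2}\ge\mathcal{E}(\omega_\tau)=E(\omega_\tau)-\tfrac12\int_\Pi\omega_\tau^2\,dx>E(\omega_1)-\tfrac12\int_\Pi\omega_1^2\,dx=\mathcal{E}(\omega_1)=S^s_{\mu_1}.
\end{equation*}

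The main obstacle is making the strict inequality $E(\omega_\tau)>E(\omega_1)$ fully rigorous rather than merely plausible: one must verify that the set where $G_\Pi$ strictly increases has positive measure with respect to $\omega_1(x)\omega_1(y)\,dx\,dy$, i.e. that $\omega_1$ is supported on a set of positive two-dimensional measure (true since $\|\omega_1\|_1>0$ and $\omega_1\in L^\infty$ or $L^2$), and that for $x,y$ in that support $|x-\bar y|^{2s-2}$ strictly decreases under the shift — which is immediate from $2s-2<0$ and strict monotonicity of $t\mapsto t^{2s-2}$ together with $|x-\bar y|$ strictly increasing in $\tau$. I would also note that this argument is uniform across the two regimes $0<s\le 1/2$ and $1/2<s<1$, since only $\omega_1\ge 0$, the admissibility constraints, and the explicit form of $G_\Pi$ are used; the only case-dependent input is the strict positivity $S^s_{\mu_1}>0$, which guarantees $\omega_1\not\equiv 0$, and which holds in both cases (Lemma \ref{lm5} for $1/2<s<1$, and the analogous elementary construction with $\omega_\tau=\tau^3\omega_1(\tau x)$ for $0<s\le 1/2$). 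An alternative, slightly cleaner route is to use the scaling $\omega\mapsto\sigma^{-?}\omega(\sigma\cdot)$ to map $\mathcal{A}^s_{\mu_1}$ into functions with larger impulse while tracking how $\mathcal{E}$ transforms, but the vertical-shift argument is the most transparent and reuses exactly the computation already appearing in Lemma \ref{lm6}.
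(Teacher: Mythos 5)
Your proposal is correct and is essentially the paper's own argument: the paper derives this lemma from the final step of the proof of Lemma \ref{lm6}, namely the vertical-shift construction $\omega_\tau(x)=\omega(x_1,x_2-\tau)$, which raises the impulse from $\mu_1$ to $\mu_2$ while keeping $\|\cdot\|_1$, $\|\cdot\|_2$ fixed and strictly increasing $E$ because $|x-y|$ is unchanged and $|x-\bar y|$ strictly increases. Your verification of the strict inequality (positivity of $\|\omega_1\|_1$, strict monotonicity of $t\mapsto t^{2s-2}$) fills in exactly the "simple calculation" the paper leaves implicit.
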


\subsection{$\mathbf{0<s\leq {1}/{2}}$} In this subsection, we consider all the remaining cases $0<s\le{1}/{2}$. As mentioned above, the energy functional $\mathcal{E}$ is not well-defined in $L^1(\Pi)\cap L^2(\Pi)$ when $0<s\le{1}/{2}$. One can not obtain the uniformly boundedness in $L^2$ of a maximizing sequence as in the proof of Lemma \ref{lm6}. To overcome this difficulty, we first consider the maximization problem in the space
\begin{equation*}
  \mathcal{A}^{s,\Gamma}_{\mu}:=\Big{\{}\omega\in L^\infty(\Pi)\mid 0\leq \omega\leq \Gamma, \int_\Pi x_2\omega dx=\mu, \int_\Pi\omega dx\leq \nu\Big{\}},
\end{equation*}
where $\Gamma>1$ is a parameter.

Denote $S^{s,\Gamma}_{\mu}=\sup_{\omega\in \mathcal{A}^{s,\Gamma}_{\mu,}}\mathcal{E}(\omega)$. Arguing as in the proof of Lemma \ref{lm5}, we have the following result.
\begin{lemma}\label{lm7}
 If $0<\mu<\infty$, then $0<S^{s,\Gamma}_{\mu}<\infty$.
\end{lemma}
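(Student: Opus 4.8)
The plan is to imitate the proof of Lemma~\ref{lm5}. The only structural change is that here the hard constraint $\omega\le\Gamma$ takes over the role played, when $1/2<s<1$, by the concave term $-\tfrac12\int_\Pi\omega^2\,dx$: it is precisely what forces $S^{s,\Gamma}_\mu<\infty$, whereas the strict positivity $S^{s,\Gamma}_\mu>0$ comes, as before, from a scaling (concentration) trial family.

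\textbf{Upper bound.} Any $\omega\in\mathcal{A}^{s,\Gamma}_\mu$ belongs to $L^1(\Pi)\cap L^\infty(\Pi)$ with $\|\omega\|_1\le\nu$ and $\|\omega\|_\infty\le\Gamma$, so estimate \eqref{2-2} of Lemma~\ref{lm1} with $p=\infty$ gives
\begin{equation*}
	\mathcal{E}(\omega)\le E(\omega)\le C\|\omega\|_1\big(\|\omega\|_1+\|\omega\|_\infty\big)\le C\nu(\nu+\Gamma),
\end{equation*}
with $C=C(s)$. Taking the supremum yields $S^{s,\Gamma}_\mu\le C\nu(\nu+\Gamma)<\infty$.

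\textbf{Lower bound.} I would reuse the trial family of Lemma~\ref{lm5}: pick $r>0$ so that $\omega_1:=1_B$, $B:=\Pi\cap B_r(0)$, satisfies $\int_\Pi x_2\omega_1\,dx=\mu$, and for $\tau>0$ set $\omega_\tau(x):=\tau^3\omega_1(\tau x)$. Exactly the computation in the proof of Lemma~\ref{lm5} (change of variables together with the homogeneity $G_\Pi(\tau^{-1}z,\tau^{-1}w)=\tau^{2-2s}G_\Pi(z,w)$ of the half-plane Green's function) gives $\int_\Pi x_2\omega_\tau\,dx=\mu$, $\int_\Pi\omega_\tau\,dx=\tau\int_\Pi\omega_1\,dx$, $\|\omega_\tau\|_\infty=\tau^3$, and
\begin{equation*}
	\mathcal{E}(\omega_\tau)=\tau^{4-2s}\Big(E(\omega_1)-\frac{\tau^{2s}}{2}\int_\Pi\omega_1^2\,dx\Big).
\end{equation*}
Since $G_\Pi>0$ on $\Pi\times\Pi$ by \eqref{1-9} and $\omega_1$ is bounded with compact support, $0<E(\omega_1)<\infty$ (the exponent $2-2s<2$ causes no integrability trouble, even at $s=1/2$). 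Hence, choosing $\tau>0$ small enough, one has simultaneously $\tau\int_\Pi\omega_1\,dx\le\nu$, $\tau^3\le1<\Gamma$, and $E(\omega_1)>\tfrac{\tau^{2s}}{2}\int_\Pi\omega_1^2\,dx$; for such $\tau$, $\omega_\tau\in\mathcal{A}^{s,\Gamma}_\mu$ and $S^{s,\Gamma}_\mu\ge\mathcal{E}(\omega_\tau)>0$.

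No genuine difficulty is expected; the argument is routine once one observes that $L^\infty$-membership permits the choice $p=\infty$ in \eqref{2-2}. The one point worth recording is that the resulting bound $S^{s,\Gamma}_\mu\le C\nu(\nu+\Gamma)$ blows up as $\Gamma\to\infty$ — this is exactly why a separate, $\Gamma$-\emph{independent} a priori bound on the maximizers themselves must be established later, in order to remove the auxiliary constraint $\omega\le\Gamma$.
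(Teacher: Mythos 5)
Your proof is correct and follows essentially the same route the paper intends: the paper simply says "arguing as in the proof of Lemma \ref{lm5}", and your argument is exactly that adaptation, using \eqref{2-2} with $p=\infty$ (so $\|\omega\|_\infty\le\Gamma$ replaces the role of the quadratic term) for the upper bound and the same scaled trial family $\omega_\tau(x)=\tau^3\omega_1(\tau x)$ for positivity. Your closing remark that the bound degenerates as $\Gamma\to\infty$, necessitating the later $\Gamma$-independent $L^\infty$ estimate on maximizers (Lemma \ref{lm11}), is also consistent with the paper's strategy.
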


We now prove the existence of maximizers of $\mathcal{E}$ relative to $\mathcal{A}^{s,\Gamma}_{\mu}$.
\begin{lemma}\label{lm8}
	For $\mu>0$ and $\Gamma>1$, there exists $\omega^{s,\Gamma}_\mu \in \mathcal{A}^{s,\Gamma}_{\mu}$ such that
	$$\mathcal{E}(\omega^{s,\Gamma}_\mu)=\sup_{\omega\in \mathcal{A}^{s,\Gamma}_{\mu}} \mathcal{E}(\omega).$$	
\end{lemma}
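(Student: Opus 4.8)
The plan is to run the direct method of the calculus of variations, exactly mirroring the structure of the proof of Lemma \ref{lm6}, with the $L^\infty$ bound $\omega\le\Gamma$ replacing the role played by the $L^2$ a priori bound that came for free when $1/2<s<1$. First I would take a maximizing sequence $\{\omega_j\}\subset\mathcal{A}^{s,\Gamma}_{\mu}$ with $\mathcal{E}(\omega_j)\to S^{s,\Gamma}_\mu$, which is finite and positive by Lemma \ref{lm7}. By Lemma \ref{lm3} (Steiner symmetrization), applied here with any fixed exponent $p>1/s$ — note that $\omega_j\in L^\infty$ gives $\omega_j\in L^1\cap L^p$ for all such $p$, and that the symmetrization preserves the constraint $0\le\omega_j^*\le\Gamma$ since it is a rearrangement — I may assume each $\omega_j$ is Steiner symmetric without decreasing the energy. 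The sequence is bounded in $L^\infty(\Pi)$ by $\Gamma$ and, since $\|\omega_j\|_1\le\nu$ and $\|x_2\omega_j\|_1=\mu$, also in $L^1(\Pi)$; hence, passing to a subsequence, $\omega_j\rightharpoonup\omega$ weakly-$*$ in $L^\infty(\Pi)$ and weakly in every $L^q(\Pi)$, $1<q<\infty$. Weak-$*$ convergence immediately gives $0\le\omega\le\Gamma$ a.e., and lower semicontinuity of the $L^1$ and weighted-$L^1$ norms under Steiner symmetry (together with the tightness supplied by $\|x_2\omega_j\|_1=\mu$, which prevents mass escaping to $x_2=0$, and the decay in $|x_1|$ from Lemma \ref{lm4}, which prevents mass escaping along the $x_1$-axis) yields $\int_\Pi x_2\omega\,dx\le\mu$ and $\int_\Pi\omega\,dx\le\nu$.

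The heart of the argument is the continuity of the kinetic energy $E$ along the sequence, and this I would lift verbatim from the proof of Lemma \ref{lm6}: for fixed $R>0$ split
\begin{equation*}
2E(\omega_j)=\int_\Pi\int_\Pi \omega_j(x)G_\Pi(x,y)\omega_j(y)\,dx\,dy
\end{equation*}
into the contribution from $\{|x_1|<R,\,0<x_2<R\}\times\{|y_1|<R,\,0<y_2<R\}$ and the remainder. The remainder is controlled, using Lemma \ref{lm1} for the $x_2\ge R$ piece and the Steiner estimate \eqref{2-7} of Lemma \ref{lm4} for the $|x_1|\ge R$ piece, by $C R^{-1/2}(\text{norms of }\omega_j)$, and these norms are uniformly bounded by $\Gamma$, $\nu$, $\mu$; the main term converges as $j\to\infty$ because $G_\Pi\in L^2_{loc}(\overline\Pi\times\overline\Pi)$ and $\omega_j(x)\omega_j(y)\rightharpoonup\omega(x)\omega(y)$ weakly in $L^2_{loc}$ (weak convergence of a bounded sequence in $L^\infty$ on a set of finite measure). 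Letting $j\to\infty$ then $R\to\infty$ gives $\limsup_j E(\omega_j)\le E(\omega)$; the reverse inequality $\liminf_j E(\omega_j)\ge E(\omega)$ follows by discarding the nonnegative remainder and using the same weak-$L^2_{loc}$ convergence on the truncated square. Hence $E(\omega_j)\to E(\omega)$, and combining with the weak lower semicontinuity $\int_\Pi\omega^2\,dx\le\liminf_j\int_\Pi\omega_j^2\,dx$ we get $\mathcal{E}(\omega)\ge S^{s,\Gamma}_\mu$, so $\omega$ attains the supremum.

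Finally I would promote the inequality $\int_\Pi x_2\omega\,dx\le\mu$ to equality by the sliding trick already used in Lemma \ref{lm6}: if $\int_\Pi x_2\omega\,dx<\mu$, translate $\omega$ upward by a suitable $\tau>0$ to $\omega_\tau(x_1,x_2)=\omega(x_1,x_2-\tau)\mathbf{1}_{\{x_2>\tau\}}$, which stays in $\mathcal{A}^{s,\Gamma}_\mu$ (the $L^\infty$ bound is unaffected by translation), does not decrease $E$ (translation away from the axis increases $G_\Pi$), keeps $\int\omega^2$ and $\int\omega$ fixed, and increases $\int x_2\omega\,dx$ — contradicting maximality. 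Thus $\omega\in\mathcal{A}^{s,\Gamma}_\mu$ and $\mathcal{E}(\omega)=S^{s,\Gamma}_\mu$, and we set $\omega^{s,\Gamma}_\mu:=\omega$. The step I expect to require the most care is the energy continuity, specifically verifying that no kinetic energy leaks at spatial infinity; this is exactly where the Steiner symmetry and the decay estimate \eqref{2-7} are indispensable, and it is the reason the symmetrization must be performed before extracting the weak limit.
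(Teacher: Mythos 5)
Your overall strategy is exactly the paper's: take a maximizing sequence, Steiner symmetrize, use the uniform $L^1\cap L^\infty$ bound coming from the constraint $0\le\omega\le\Gamma$ to extract weak/weak-$*$ limits, pass to the limit in the kinetic energy by the same box-plus-remainder splitting as in Lemma \ref{lm6} (with Lemmas \ref{lm1} and \ref{lm4} controlling the remainder), use weak lower semicontinuity of $\int\omega^2$, and restore the impulse constraint by the upward-translation argument. The paper's proof is essentially a two-line reduction to Lemma \ref{lm6} along these lines, so in structure you match it.

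There is, however, one step whose justification as written would fail, and it is precisely the point that distinguishes the regime $0<s\le 1/2$ (where Lemma \ref{lm8} is needed) from the regime of Lemma \ref{lm6}. You pass to the limit in the main term by claiming $G_\Pi\in L^2_{loc}(\overline\Pi\times\overline\Pi)$ and pairing it with weak $L^2_{loc}$ convergence of $\omega_j\otimes\omega_j$. But near the diagonal $G_\Pi(x,y)\sim c_{2,s}|x-y|^{2s-2}$, and $\int_{|x-y|<1}|x-y|^{2(2s-2)}\,dy<\infty$ if and only if $s>1/2$; so for $0<s\le 1/2$ the kernel is \emph{not} locally square integrable, and this is exactly why the paper switches to the statement $G_\Pi\in L^{p'}_{loc}(\overline\Pi\times\overline\Pi)$ for a fixed $p>1/s$ (then $p'(2-2s)<2$) and pairs it with weak convergence of $\omega_j\otimes\omega_j$ in $L^p$ on bounded boxes, which is available because $\|\omega_j\|_\infty\le\Gamma$. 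You in fact set up all the needed ingredients — you fix $p>1/s$, and you record weak convergence in every $L^q$, $1<q<\infty$, and weak-$*$ in $L^\infty$ — so the repair is immediate: replace the $L^2_{loc}$/weak-$L^2_{loc}$ duality by the $L^{p'}_{loc}$/weak-$L^p_{loc}$ duality (and correspondingly use the exponent $p$, not $2$, in the Lemma \ref{lm4} tail estimate, which only changes the power of $R$ in the vanishing remainder). With that correction the argument is complete and coincides with the paper's.
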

\begin{proof}
Let $\{\omega_j\}_{j=1}^\infty\subset \mathcal{A}^{s,\Gamma}_{\mu}$ be a maximizing sequence. By the definition of $\mathcal{A}^{s,\Gamma}_{\mu}$, we know that $\{\omega_j\}_{j=1}^\infty$ is uniformly bounded in $L^1(\Pi)\cap L^\infty(\Pi)$. In view of Lemma \ref{lm3}, we may assume that $\omega_j$ is Steiner symmetric by replacing $\omega_j$ with its Steiner symmetrization. Let $p\in (1/s, +\infty)$. Then, by passing to a subsequence (still denoted by $\{\omega_j\}_{j=1}^\infty$), we may assume $\omega_j \rightarrow \omega$ weakly star in  $L^\infty$, weakly in $L^p$ and weakly in $L^2$ as $j\to \infty$. Note that $G_\Pi(x,y)\in L_{loc}^{p'}(\overline{\Pi}\times\overline{\Pi})$. With similar arguments as in the proof of Lemma \ref{lm6}, it is easy to show that $\omega$ is indeed a maximizer. The proof is thus complete.		
\end{proof}

Next, we prove some essential properties of the maximizers. Let $\Sigma_\mu^{s,\Gamma}$ be the set of maximizers of $\mathcal{E}$ over $\mathcal{A}^{s,\Gamma}_{\mu}$. We first show that each maximizer is not a patch type if $\Gamma$ is sufficiently large.
\begin{lemma}\label{lm9}
	There exists $\Gamma_1>0$ such that for $\Gamma>\Gamma_1$, there holds
	\begin{equation*}
		\text{meas}\left(\{0<\omega<\Gamma\}\right)>0, \quad \forall\, \omega\in \Sigma_\mu^{s,\Gamma}.
	\end{equation*}
\end{lemma}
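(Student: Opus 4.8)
The plan is to argue by contradiction: suppose that for arbitrarily large $\Gamma$ there is a maximizer $\omega \in \Sigma_\mu^{s,\Gamma}$ that is a patch, i.e. $\omega = \Gamma 1_A$ for some measurable set $A \subset \Pi$ with $\text{meas}(A) \cdot \Gamma \le \nu$ and $\Gamma\int_A x_2\,dx = \mu$. The key idea is that a pure patch of very large height occupies a very thin region, and redistributing a small amount of its mass downward (toward the $x_1$-axis, where $x_2$ is small, so the impulse constraint is cheap to satisfy) into a flatter profile strictly increases $\mathcal{E}$, because the quadratic penalty $-\tfrac12\int\omega^2$ is extremely costly at height $\Gamma$ while the energy $E$ changes only mildly. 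More precisely, I would construct an explicit competitor: peel off a sliver of the patch and spread it out at bounded height near $\{x_2 \text{ small}\}$, keeping $\int x_2\omega\,dx = \mu$ and $\int\omega\,dx \le \nu$, and estimate the change in $\mathcal{E}$.

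The concrete steps are as follows. First, normalize $\lambda=\nu=1$ as in the surrounding text, so $\omega$ lives in $\mathcal{A}^{s,\Gamma}_\mu$ with $\int_\Pi\omega\,dx\le 1$; if $\omega=\Gamma 1_A$ then $\text{meas}(A)\le \Gamma^{-1}$, which forces $A$ to shrink as $\Gamma\to\infty$. Second, using Lemma \ref{lm1} (with $p=\infty$) and the constraint $\|\omega\|_1\le1$, obtain $\|\mathcal{G}_s\omega\|_\infty \le C(1+\Gamma)$, and more usefully, by the layer-cake / rearrangement-type bound, control $E(\omega)$ for a patch of height $\Gamma$ and mass $\le 1$: one checks $E(\Gamma 1_A) = \tfrac12\int_A\int_A \Gamma^2 G_\Pi(x,y)\,dx\,dy$, and since $\text{meas}(A)\le\Gamma^{-1}$ and $G_\Pi(x,y)\le c_{2,s}|x-y|^{-(2-2s)}$ is locally integrable, $E(\Gamma 1_A)$ stays bounded as $\Gamma\to\infty$ (the self-interaction of a set of measure $\Gamma^{-1}$ at height $\Gamma$ scales like $\Gamma^2\cdot\Gamma^{-1}\cdot\Gamma^{-(1-s)}\to 0$ roughly). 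Meanwhile the penalty term satisfies $\tfrac1{2}\int_\Pi\omega^2\,dx = \tfrac12\Gamma^2\,\text{meas}(A)$, which, combined with $\Gamma\int_A x_2\,dx=\mu$, is bounded below by a positive constant independent of $\Gamma$ provided $A$ cannot escape to $x_2=\infty$ (which it cannot, since large $x_2$ would make the impulse too big relative to the available measure $\Gamma^{-1}$). Third, produce the competitor $\tilde\omega$: take a fixed reference bump $\eta = h\,1_{B}$ of bounded height $h$ supported in a fixed region near the $x_1$-axis with $\int\eta\,dx$ and $\int x_2\eta\,dx$ equal to a small fraction $\epsilon$ of $\omega$'s mass and impulse, remove an $\epsilon$-fraction of $\omega$ from the part of $A$ farthest from the axis, and add $\eta$; by Lemma \ref{lm5}/\ref{lm7}-type reasoning $\tilde\omega\in\mathcal{A}^{s,\Gamma}_\mu$ for $\Gamma$ large. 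Compute $\mathcal{E}(\tilde\omega)-\mathcal{E}(\omega)$: the energy difference is $O(\epsilon)$ uniformly (bounded $\mathcal{G}_s$), but the penalty term decreases by an amount of order $\epsilon\,\Gamma^2\,\text{meas of removed piece} - O(\epsilon^2) \gtrsim c\,\epsilon - O(\epsilon^2)$ once we use $\int_A\omega^2 = \Gamma^2\,\text{meas}(A)\ge c_0 > 0$; choosing $\epsilon$ small then $\Gamma$ large gives $\mathcal{E}(\tilde\omega) > \mathcal{E}(\omega) = S^{s,\Gamma}_\mu$, a contradiction.

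The main obstacle I anticipate is making the lower bound $\int_\Pi\omega^2\,dx = \Gamma^2\,\text{meas}(A) \ge c_0$ genuinely uniform in $\Gamma$, i.e. ruling out that the patch compensates by migrating to a moderate height where it can be wider. This requires using the impulse constraint $\Gamma\int_A x_2\,dx=\mu$ together with $\Gamma\,\text{meas}(A)\le 1$ to bound $\text{meas}(A)$ from below — e.g. if $A$ sits at height $\sim R$ then $\mu \le \Gamma\cdot(\sup_A x_2)\cdot\text{meas}(A)$ forces $\text{meas}(A)\ge \mu/(\Gamma\sup_A x_2)$, while a simultaneous upper bound on $E(\omega)$ and on $\mathcal{E}(\omega)\ge S^{s,\Gamma}_\mu \ge S^s_\mu>0$ (the maximizer must have positive energy, by Lemma \ref{lm7}) pins down the admissible range of $R$ and hence gives $\text{meas}(A)$ bounded below, so $\int\omega^2 = \Gamma\cdot\Gamma\,\text{meas}(A)\to\infty$ — in fact $\int\omega^2\to\infty$, which already contradicts $\mathcal{E}(\omega)\ge S^s_\mu>0$ together with the boundedness of $E(\omega)$. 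So in fact the cleanest route may avoid constructing a competitor altogether: show directly that a patch of height $\Gamma$ and mass $\le 1$ satisfying the impulse constraint has $E(\Gamma 1_A)$ bounded and $\tfrac12\int(\Gamma 1_A)^2\,dx \to \infty$ as $\Gamma\to\infty$, so $\mathcal{E}(\Gamma 1_A)\to -\infty$, contradicting $\mathcal{E}(\Gamma 1_A) = S^{s,\Gamma}_\mu \ge S^{s,2}_\mu > 0$ for all $\Gamma>2$. I would present this direct argument as the proof, with the competitor construction as backup in case the energy bound $E(\Gamma 1_A) = O(1)$ needs the Steiner-symmetrized geometry of $A$ to be pinned down.
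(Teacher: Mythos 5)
Your eventual ``direct route'' is in the same spirit as the paper's proof (a patch maximizer would contradict the positivity of $S^{s,\Gamma}_\mu$ from Lemma \ref{lm7}), but the two quantitative claims it rests on are not true in general, so as written there is a genuine gap. First, $E(\Gamma 1_A)$ need \emph{not} stay bounded as $\Gamma\to\infty$: your own scaling heuristic $\Gamma^{2}\cdot\Gamma^{-1}\cdot\Gamma^{-(1-s)}$ equals $\Gamma^{s}\to\infty$, and the correct bound for a patch of measure $m\le\Gamma^{-1}$ is $E(\Gamma 1_A)\le\frac{\Gamma^{2}}{2}A\,m^{1+s}$, which is of order $\Gamma^{1-s}\to\infty$ when $m\sim\Gamma^{-1}$ (e.g.\ a disk of measure $\Gamma^{-1}$ centred at height $\mu$ is admissible and has $E\sim\Gamma^{1-s}$). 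Second, the lower bound $\frac{\Gamma^{2}}{2}\,\mathrm{meas}(A)\to\infty$ fails without height control: a patch of measure $\varepsilon\Gamma^{-2}$ sitting at height of order $\Gamma$ satisfies both the mass and the impulse constraints yet has bounded penalty; your parenthetical reason that the patch ``cannot escape to $x_2=\infty$'' is wrong because the mass is only constrained from \emph{above}, so tiny mass at great height is allowed. The proposed repair (pinning down the height via an upper bound on $E$) is then circular, since no $\Gamma$-independent bound on $E$ is available (Lemma \ref{lm1} only gives $E\le C(1+\Gamma)$, and for $0<s\le 1/2$ even $S^{s,\Gamma}_\mu$ has no obvious uniform bound).

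The gap closes once you stop trying to bound $E$ and the penalty separately and compare them instead. Writing $\Omega=\{\omega=\Gamma\}$, $m=\mathrm{meas}(\Omega)\le\Gamma^{-1}$, discard the image term in $G_\Pi$ and use the Riesz rearrangement inequality to replace $\Omega$ by the disk $B_r(0)$ with $\pi r^{2}=m$:
\[
E(\Gamma 1_\Omega)\le\frac{\Gamma^{2}}{2}\int_{B_r(0)}\int_{B_r(0)}\frac{c_{2,s}}{|x-y|^{2-2s}}\,dx\,dy=\frac{\Gamma^{2}}{2}A\,m^{1+s},
\qquad A:=\frac{1}{\pi^{1+s}}\int_{B_1(0)}\int_{B_1(0)}\frac{c_{2,s}}{|x-y|^{2-2s}}\,dx\,dy,
\]
so that $\mathcal{E}(\Gamma 1_\Omega)\le\frac{\Gamma^{2}m}{2}\bigl(Am^{s}-1\bigr)\le\frac{\Gamma^{2}m}{2}\bigl(A\Gamma^{-s}-1\bigr)<0$ as soon as $\Gamma>A^{1/s}$, using only $m\le\Gamma^{-1}$. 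This contradicts $\mathcal{E}(\Gamma 1_\Omega)=S^{s,\Gamma}_\mu>0$, with the explicit threshold $\Gamma_1=\max\{A^{1/s},1\}$, and requires no asymptotics, no height control, and no competitor; this ratio comparison is exactly the paper's proof. Your backup competitor construction is likewise not needed, and as sketched it would run into the same missing uniform lower bound on $\Gamma^{2}\,\mathrm{meas}(A)$ discussed above.
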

\begin{proof}
	We take  a maximizer  $\omega\in \Sigma_\mu^{s,\Gamma}$. Since $S_\mu^{s,\Gamma}>0$ for $\mu>0$, $\omega$ is not trivial. Assume that $\omega=\Gamma 1_\Omega$ for some measurable set $\Omega\subset \Pi$ with $\Gamma \text{meas}(\Omega) \leq 1$. We choose $r>0$ such that $\pi r^2=\text{meas}(\Omega)$. Then, by a simple rearrangement inequality, we have
	\begin{align*}
		\mathcal{E}(\omega)&=\frac{\Gamma^2}{2}\int_{\Omega}\int_{\Omega} G_\Pi(x,y)dxdy-\frac{\Gamma^2}{2}\text{meas}(\Omega) \\
		&\leq \frac{\Gamma^2}{2}\int_{\Omega}\int_{\Omega} \frac{c_{2,s}}{|x-y|^{2-2s}} dxdy -\frac{\Gamma^2}{2}\text{meas}(\Omega) \\
		&\leq \frac{\Gamma^2}{2}\int_{B_r(0)}\int_{B_r(0)} \frac{c_{2,s}}{|x-y|^{2-2s}}dxdy -\frac{\Gamma^2}{2}\text{meas}(\Omega) \\
		&=\frac{\Gamma^2\text{meas}(\Omega)}{2}\left(A[\text{meas}(\Omega)]^s-1 \right)\\
		&\leq \frac{\Gamma^2\text{meas}(\Omega)}{2}\left(\frac{A}{\Gamma^s}-1 \right),
	\end{align*}
	where $$A=\frac{1}{\pi^{1+s}}\int_{B_1(0)}\int_{B_1(0)} \frac{c_{2,s}}{|x-y|^{2-2s}}dxdy.$$ Taking  $\Gamma_1=\max\{A^{\frac{1}{s}},1\} $, we infer from the above inequality that $\mathcal{E}(\omega)<0$ for $\Gamma>\Gamma_1$. This is a contradiction. The proof is thus complete.
	
\end{proof}

By standard arguments, we can deduce the following relation between a maximizer $\omega$ and  its corresponding stream function.
\begin{lemma}\label{lm10}
	Suppose $\Gamma>\Gamma_1$, then for each $\omega\in \Sigma_\mu^{s,\Gamma}$, there exist constants $W,\gamma\geq 0$ such that
	\begin{equation}\label{2-12}
		\begin{cases}
			\mathcal{G}_s\omega-Wx_2-\gamma\leq 0,\quad &\text{on}\ \ \{\omega=0\},\\
			\mathcal{G}_s\omega-Wx_2-\gamma=\omega,\quad &\text{on}\ \ \{0<\omega<\gamma\},\\
			\mathcal{G}_s\omega-Wx_2-\gamma\geq \Gamma,\quad &\text{on}\ \ \{\omega=\Gamma\}.
		\end{cases}	
	\end{equation}
Moreover, $W$ and $\gamma$ are uniquely determined by $\omega$.
\end{lemma}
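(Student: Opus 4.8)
The plan is to read \eqref{2-12} as the Karush--Kuhn--Tucker optimality condition for the constrained maximization and to produce the two Lagrange multipliers explicitly.

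\textbf{Linearization.} Fix $\omega\in\Sigma_\mu^{s,\Gamma}$ and set $\phi:=\mathcal{G}_s\omega-\omega$; by Lemma \ref{lm1} (with $p$ large) $\phi\in L^\infty(\Pi)$. Since $E$ is the quadratic form associated with the symmetric kernel $G_\Pi$, for any $\zeta\in\mathcal{A}^{s,\Gamma}_{\mu}$ and $t\in[0,1]$ one has the elementary expansion
\[
\mathcal{E}\big((1-t)\omega+t\zeta\big)=\mathcal{E}(\omega)+t\int_\Pi\phi\,(\zeta-\omega)\,dx+t^2\Big(E(\zeta-\omega)-\tfrac12\|\zeta-\omega\|_2^2\Big),
\]
all terms being finite by Lemma \ref{lm1}. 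As $\mathcal{A}^{s,\Gamma}_{\mu}$ is convex, $(1-t)\omega+t\zeta\in\mathcal{A}^{s,\Gamma}_{\mu}$, and maximality of $\omega$ forces the coefficient of $t$ to be nonpositive; thus
\[
\int_\Pi\phi\,\zeta\,dx\le\int_\Pi\phi\,\omega\,dx\qquad\text{for all }\zeta\in\mathcal{A}^{s,\Gamma}_{\mu}.
\]
In other words $\omega$ maximizes the \emph{linear} functional $\zeta\mapsto\int_\Pi\phi\,\zeta\,dx$ over the convex set $\mathcal{A}^{s,\Gamma}_{\mu}=\{0\le\zeta\le\Gamma\}\cap\{\int_\Pi x_2\zeta=\mu\}\cap\{\int_\Pi\zeta\le\nu\}$.

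\textbf{Extracting the multipliers.} To this (essentially linear) program I apply a bathtub-type Lagrange argument with the two scalar constraints. A feasible $\zeta$ with $0<\zeta<\Gamma$, $\int_\Pi x_2\zeta=\mu$ and $\int_\Pi\zeta<\nu$ exists (a small smooth bump), so a Slater condition holds and there are a multiplier $W\in\mathbb{R}$ for the impulse equality and $\gamma\ge0$ for the mass inequality (with $\gamma(\int_\Pi\omega-\nu)=0$) such that $\omega$ maximizes $\zeta\mapsto\int_\Pi(\phi-Wx_2-\gamma)\zeta\,dx$ over the box $\{0\le\zeta\le\Gamma\}$ alone. The pointwise bathtub principle then forces $\omega=\Gamma$ a.e.\ on $\{\phi-Wx_2-\gamma>0\}$ and $\omega=0$ a.e.\ on $\{\phi-Wx_2-\gamma<0\}$; equivalently $\phi-Wx_2-\gamma\le0$ a.e.\ on $\{\omega<\Gamma\}$ and $\phi-Wx_2-\gamma\ge0$ a.e.\ on $\{\omega>0\}$. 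Writing $\phi=\mathcal{G}_s\omega-\omega$ and separating the three sets $\{\omega=0\}$, $\{0<\omega<\Gamma\}$, $\{\omega=\Gamma\}$ gives precisely \eqref{2-12}. Alternatively one may avoid abstract duality and produce $W,\gamma$ directly from the displayed inequality by an exchange-of-mass argument: perturbing $\omega$ by $t(\alpha\mathbf{1}_A-\beta\mathbf{1}_B)$ with $A\subset\{\omega\le\Gamma-\delta\}$, $B\subset\{\omega\ge\delta\}$ and $\alpha,\beta>0$ chosen to keep the impulse fixed and not increase the mass is admissible for small $t$, and comparing the resulting weighted averages of $\phi$ over $A$ and $B$ yields the multipliers. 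I expect this passage from the variational inequality to the explicit multipliers --- in particular keeping track of the coupling between the equality (impulse) and inequality (mass) constraints --- to be the only genuinely delicate point.

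\textbf{Sign of $W$ and uniqueness.} For $W\ge0$ I use the decay of the stream function: by Lemma \ref{lm2}, $\mathcal{G}_s\omega(x)\to0$ as $|x|\to\infty$. On $\{\omega=0\}$ we have $0\le\mathcal{G}_s\omega\le Wx_2+\gamma$; if $\{\omega=0\}$ contains points with $x_2\to\infty$ this forces $W\ge0$, while if $\{\omega=0\}$ is bounded in the $x_2$-direction then, since $\omega\in L^1(\Pi)$, a.e.\ vertical line eventually lies in $\{0<\omega<\Gamma\}$ and the identity $\mathcal{G}_s\omega-Wx_2-\gamma=\omega$ together with $\mathcal{G}_s\omega\to0$, $\omega\ge0$ and $\gamma\ge0$ again gives $W\ge0$ (indeed $W=\gamma=0$ in that degenerate situation). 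For uniqueness, by Lemma \ref{lm9} the set $\{0<\omega<\Gamma\}$ has positive two-dimensional measure, hence its projection onto the $x_2$-axis has positive one-dimensional measure; since $\mathcal{G}_s\omega-\omega-Wx_2-\gamma=0$ a.e.\ on that set, a second admissible pair $(W',\gamma')$ would satisfy $(W-W')x_2+(\gamma-\gamma')=0$ for a positive-measure set of values $x_2$, whence $W=W'$ and $\gamma=\gamma'$.
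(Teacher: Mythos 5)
Your proposal is correct in substance and arrives at the same optimality system, but the multiplier-extraction step is organized differently from the paper. The paper follows Turkington's device: using Lemma \ref{lm9} (this is where $\Gamma>\Gamma_1$ enters) it fixes correctors $h_1,h_2$ supported in $\{\omega\ge\delta_0\}$ with $\int h_1=1$, $\int x_2h_1=0$, $\int h_2=0$, $\int x_2h_2=1$, perturbs by $\omega+\tau\big(h-(\int h)h_1-(\int x_2h)h_2\big)$ for test functions $h$ of the appropriate sign near the obstacle levels, and reads off the multipliers explicitly as $\gamma=\int(\mathcal{G}_s\omega-\omega)h_1\,dx$ and $W=\int(\mathcal{G}_s\omega-\omega)h_2\,dx$; this is exactly your ``exchange-of-mass'' alternative made concrete and self-contained. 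Your primary route --- using convexity of $\mathcal{A}^{s,\Gamma}_{\mu}$ and the quadratic expansion to show that $\omega$ maximizes the linear functional $\zeta\mapsto\int_\Pi\phi\,\zeta\,dx$, and then invoking convex duality with a Slater point plus the bathtub principle --- is legitimate because the constraint map $\zeta\mapsto(\int x_2\zeta,\int\zeta)$ has finite-dimensional range, but the existence of the multipliers (separation in $\mathbb{R}^3$, nondegeneracy of the multiplier on the objective, treatment of the affine equality) should be written out; as you yourself note, that is where the real content lies, and the paper's $h_1,h_2$ construction is precisely what replaces it. What each approach buys: the paper's is elementary and keeps the dependence on Lemma \ref{lm9} transparent; yours gets $\gamma\ge0$ for free from the sign convention and isolates the bathtub structure cleanly. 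The uniqueness argument (positive measure of $\{0<\omega<\Gamma\}$, hence more than one value of $x_2$) coincides with the paper's.

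One caveat in the sign of $W$: in your degenerate branch (when $\{\omega=0\}$ is bounded in the $x_2$-direction), the facts you cite --- $\omega\ge0$, $\mathcal{G}_s\omega\to0$, $\gamma\ge0$ --- only give $Wx_2+\gamma\le\mathcal{G}_s\omega\to0$ along points of $\{0<\omega<\Gamma\}$ with $x_2\to\infty$, i.e.\ $W\le0$, which is the wrong direction. To reach your stated conclusion $W\ge0$ (indeed $W=\gamma=0$ there) you must also use the upper bound $\omega<\Gamma$ on that set, which gives $-Wx_2<\Gamma+\gamma-\mathcal{G}_s\omega\le\Gamma+\gamma$ for arbitrarily large $x_2$. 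The paper sidesteps the case split by taking a sequence $\bar{x}^j$ with $\bar{x}^j_2\to\infty$ and $\omega(\bar{x}^j)\to0$ and using the inequality $(\mathcal{G}_s\omega-Wx_2-\gamma)_+\le\omega$ valid on $\{\omega<\Gamma\}$, which forces $(-W\bar{x}^j_2-\gamma)_+\to0$ and hence $W\ge0$; the same device gives $\gamma\ge0$ along a sequence with $\bar{x}^j_2\to0$. This is a small, fixable gap, not a flaw in the overall strategy.
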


\begin{proof}
We will follow idea of Turkington \cite{T2} to prove Lemma \ref{lm10}.
	By the previous lemma, there exists $0<\delta_0<\Gamma$ such that $\text{meas}\left(\{\delta_0<\omega<\Gamma\}\right)>0$. We take functions $h_1,h_2\in L^\infty(\Pi)$ with compact support and satisfying
	\begin{align*}
		\begin{cases}
		\text{supp}(h_1), \text{supp}(h_2) \subset \{\omega\geq \delta_0\},\\
		\int_\Pi h_1(x)dx=1,\quad \int_\Pi x_2h_1(x)dx=0,\\
		\int_\Pi h_2(x)dx=0,\quad \int_\Pi x_2h_2(x)dx=1.
		\end{cases}
	\end{align*}
	For any $0<\delta<\delta_0$, we take a function $h \in L^\infty(\Pi)$ with compact support, $h\geq 0$ on $\{0\leq \omega\leq \delta\}$ and $h\leq 0$ on $\{\Gamma-\delta\leq \omega\leq \Gamma\}$.
	We set the test function $$\omega_\tau=\omega+\tau \eta,\ \ \tau>0,$$
	with $$\eta=\left(h-\left(\int_\Pi hdx\right)h_1-\left(\int_\Pi x_2hdx\right)h_2\right).$$
	One verify that $\omega_\tau\in \mathcal{A}^{s,\Gamma}_\mu$ if $\tau>0$ is sufficiently small. Since $\omega$ is a maximizer, there holds
	$$0\geq \frac{d\mathcal{E}(\omega_\tau)}{d\tau} \Bigg{|}_{\tau=0_+}=\int_\Pi (\mathcal{G}_s\omega-\omega)\eta dx. $$
 Set
 \begin{equation*}
   \gamma=\int_\Pi (\mathcal{G}_s\omega-\omega)h_1 dx,\ \ \ \ \  W=\int_\Pi (\mathcal{G}_s\omega-\omega)h_2 dx.
 \end{equation*}
We deduce that
\begin{equation*}
  \begin{split}
     0 & \geq \int_\Pi (\mathcal{G}_s\omega-\omega)\eta dx \\
       & = \int_\Pi (\mathcal{G}_s\omega-\omega)h dx -W\int_\Pi x_2hdx-\gamma\int_\pi hdx\\
       & =\int_\Pi \left(\mathcal{G}_s\omega-Wx_2-\gamma-\omega\right)hdx.
  \end{split}
\end{equation*}
By the arbitrariness of $h$, we must have
	\begin{equation*}
		\begin{cases}
			\mathcal{G}_s\omega-Wx_2-\gamma-\omega\leq 0,\quad\text{on}\,\,\{0\leq \omega< \delta\},\\
			\mathcal{G}_s\omega-Wx_2-\gamma-\omega=0,\quad\text{on}\,\,\{\delta\leq \omega\leq \Gamma-\delta\},\\
			\mathcal{G}_s\omega-Wx_2-\gamma-\omega\geq 0,\quad\text{on}\,\,\{\Gamma-\delta<\omega\leq\Gamma\},
		\end{cases}
	\end{equation*}
	which, by letting $\delta\rightarrow 0$, implies \eqref{2-12}.
	
	Since $\int_\Pi \omega dx\leq 1$, we can take a sequence $\{x^j\}_{j=1}^\infty$ with $x^j=(x^j_1, x^j_2)$, such that $x^j_1\rightarrow \infty$, $x^j_2\rightarrow 0$ and $\omega(x^j)\rightarrow 0$ as $j\to +\infty$. Thus, by \eqref{2-4} in Lemma \ref{lm2}, we have
	$$0=\lim_{j\rightarrow \infty} (\mathcal{G}_s\omega(x^j)-Wx^j_2-\gamma)_+=(-\gamma)_+,$$
	which implies that $\gamma\geq 0$.  Similarly, we can also take another sequence $\{\bar{x}^j\}_{j=1}^\infty$ with $\bar{x}^j=(\bar{x}^j_1, \bar{x}^j_2)$, such that $\bar{x}^j_1\rightarrow 0$, $\bar{x}^j_2\rightarrow \infty$ and $\omega(\bar{x}^j)\rightarrow 0$ as $j\to +\infty$. In this case, we have
	$$0=\lim_{j\rightarrow \infty} (\mathcal{G}_s\omega(\bar{x}^j)-W\bar{x}^j_2-\gamma)_+=\lim_{j\rightarrow \infty}(-W\bar{x}^j_2-\gamma)_+,$$
	which gives $W\geq 0$.
	
	Now, we show the uniqueness of $W$ and $\gamma$. Indeed, if there are $\hat{W}, \hat{\gamma}\geq 0$ such that \eqref{2-12} holds. Then, $$\mathcal{G}_s\omega(x)-\hat{W}x_2-\hat{\gamma}=\mathcal{G}_s\omega(x)-Wx_2-\gamma,$$ for all $x\in\Pi$ such that $0<\omega(x)<\Gamma$. This is, $$(\hat{W}-W)x_2=\gamma-\hat{\gamma},$$ for all $x\in\Pi$ such that $0<\omega(x)<\Gamma$, which leads to  $\hat{W}=W$ and $\hat{\gamma}=\gamma$, since $\{0<\omega<\Gamma\}$ has positive measure. The proof of Lemma \ref{lm10} is therefore finished.
\end{proof}

Thanks to Lemma \ref{lm10}, we show that all maximizers are uniformly bounded in $L^\infty$ with respect to $\Gamma$. This fact shows that $\Sigma_\mu^{s}=\Sigma_\mu^{s,\Gamma}$ if $\Gamma$ is large enough. In other words, we obtain the existence of maximizers for $\mathcal{E}$ over $\mathcal{A}^s_{\mu}$.
\begin{lemma}\label{lm11}
	There exists a constant $M_0>0$ such that, for all $\Gamma>\Gamma_1$, there holds
	\begin{equation}\label{2-13}
		0\leq \omega\leq M_0, \quad \forall\, \omega\in 	 \Sigma_\mu^{s, \Gamma}.
	\end{equation}
\end{lemma}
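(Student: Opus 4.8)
The plan is to exploit the Euler--Lagrange relation \eqref{2-12} from Lemma \ref{lm10} to derive an a priori $L^\infty$ bound on any maximizer $\omega\in\Sigma_\mu^{s,\Gamma}$ that does not depend on $\Gamma$. Fix $\Gamma>\Gamma_1$ and $\omega\in\Sigma_\mu^{s,\Gamma}$, with its associated constants $W,\gamma\geq 0$ from Lemma \ref{lm10}. On the set $\{0<\omega<\Gamma\}$ we have $\omega=\mathcal{G}_s\omega-Wx_2-\gamma\leq \mathcal{G}_s\omega$, and on $\{\omega=\Gamma\}$ we have $\Gamma\leq \mathcal{G}_s\omega-Wx_2-\gamma\leq\mathcal{G}_s\omega$; combining with the trivial bound on $\{\omega=0\}$, we get the pointwise inequality $0\leq\omega(x)\leq\mathcal{G}_s\omega(x)$ for a.e.\ $x\in\Pi$. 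Hence it suffices to bound $\|\mathcal{G}_s\omega\|_\infty$ by a constant depending only on $s,\mu$ (recall $\nu=1$ is fixed).

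The key difficulty is that Lemma \ref{lm1} bounds $\|\mathcal{G}_s\omega\|_\infty$ by $C(\|\omega\|_1+\|\omega\|_p)$ for $p>1/s$, and $\|\omega\|_p\leq \Gamma^{1-1/p}\|\omega\|_1^{1/p}$ still carries a $\Gamma$-dependence when $1/2<1/s<\infty$, i.e.\ it is not directly uniform. To get around this I would run a Moser-type iteration / De Giorgi truncation argument driven by the inequality $\omega\leq\mathcal{G}_s\omega$. Concretely, for a level $t>0$ let $A_t=\{\omega>t\}$; from $\omega\leq\mathcal{G}_s\omega$ and the explicit kernel bound $G_\Pi(x,y)\leq c_{2,s}|x-y|^{2-2s}$, estimate on $A_t$
\begin{equation*}
(\omega-t)_+ \leq \int_\Pi \frac{c_{2,s}}{|x-y|^{2-2s}}\,(\omega-t)_+(y)\,dy + \int_\Pi \frac{c_{2,s}}{|x-y|^{2-2s}}\, t\,1_{A_t}(y)\,dy,
\end{equation*}
then use the weak-type / Hardy--Littlewood--Sobolev mapping properties of the Riesz potential $I_{2s}$ of order $2s$ on $\mathbb{R}^2$ together with $\|\omega\|_1\leq 1$, $\|x_2\omega\|_1=\mu$ to show $\mathrm{meas}(A_t)$ decays fast enough (say geometrically along a dyadic sequence of levels $t_k=M_0(1-2^{-k})$ for a suitably chosen $M_0=M_0(s,\mu)$) that it must vanish beyond some level. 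The input bounds $\|\omega\|_1\leq 1$ and $\mathrm{meas}(\{\omega>0\})<\infty$ — the latter because $\mathcal{G}_s\omega-Wx_2-\gamma>0$ on $\{\omega>0\}$ and $\mathcal{G}_s\omega\to 0$ at infinity by Lemma \ref{lm2}, forcing the support to be bounded — give the starting estimate for the iteration, and crucially none of these constants sees $\Gamma$.

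I expect the main obstacle to be controlling the support and the first truncation level uniformly in $\Gamma$: one must rule out the possibility that the ``free boundary'' set $\{0<\omega<\Gamma\}$ shrinks or runs off to infinity as $\Gamma\to\infty$. This is handled by noting $S_\mu^{s,\Gamma}$ is monotone in $\Gamma$ and bounded above (Lemma \ref{lm7}, plus the argument of Lemma \ref{lm5}/\ref{lm1} which gives $\mathcal{E}(\omega)\leq C\|\omega\|_1^2\leq C$ independent of $\Gamma$), so $E(\omega)$ and $\int_\Pi\omega^2\,dx$ are bounded uniformly; this and the relation \eqref{2-12} pin down $W,\gamma$ and the support in terms of $s,\mu$ only. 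Once the uniform bound $\|\omega\|_\infty\leq M_0$ is established, any $\Gamma>\max\{\Gamma_1,M_0\}$ makes the constraint $\omega\leq\Gamma$ inactive, so $\Sigma_\mu^{s}=\Sigma_\mu^{s,\Gamma}$ and the existence of a maximizer of $\mathcal{E}$ over $\mathcal{A}_\mu^s$ follows, completing the proof.
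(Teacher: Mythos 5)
Your reduction to the pointwise bound $0\le\omega\le\mathcal{G}_s\omega$ via Lemma \ref{lm10} is exactly the paper's starting point, but from there you miss the short argument that finishes the proof and you replace it by a truncation scheme whose supporting claims do not hold at this stage. The paper simply splits the Riesz potential at a radius $r$,
\[
\omega(x)\le\mathcal{G}_s\omega(x)\le \|\omega\|_\infty\int_{|x-y|<r}\frac{c_{2,s}}{|x-y|^{2-2s}}\,dy+\frac{c_{2,s}}{r^{2-2s}}\,\|\omega\|_1\le C_1r^{2s}\|\omega\|_\infty+\frac{c_{2,s}}{r^{2-2s}},
\]
chooses $r$ so that $C_1r^{2s}=1/2$, and absorbs the term $\tfrac12\|\omega\|_\infty$ into the left-hand side; this is legitimate because $\|\omega\|_\infty\le\Gamma<\infty$ a priori, and it yields $\|\omega\|_\infty\le 2c_{2,s}r^{2s-2}=:M_0(s)$ with no $\Gamma$-dependence. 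The constraint $\omega\le\Gamma$ is used only qualitatively (finiteness of $\|\omega\|_\infty$), which is precisely the observation you passed over when you concluded that the $L^p$-interpolation route ``is not directly uniform'' and turned to heavier machinery.

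Concerning your De Giorgi/Moser plan, several of its inputs are not available here. First, the boundedness of $\mathrm{supp}(\omega)$ cannot be inferred from ``$\mathcal{G}_s\omega-Wx_2-\gamma>0$ on $\{\omega>0\}$ and $\mathcal{G}_s\omega\to0$ at infinity'': at this point one only knows $W,\gamma\ge0$, the decay argument needs $\gamma>0$ (and for $\gamma=0$, $W>0$ one needs decay of $\mathcal{G}_s\omega/x_2$), the sign information $W>0$, $\gamma=0$ is only proved later for $\mu\le\mu_0$ (Lemma \ref{lm13}), and the bounded-support statement (Lemma \ref{lm14}) itself uses the $L^\infty$ bound you are trying to establish together with regularity theory — so this part of your plan is circular. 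Second, the claim that $E(\omega)$ and $\int_\Pi\omega^2\,dx$ are bounded uniformly in $\Gamma$ (and ``pin down $W,\gamma$ and the support in terms of $s,\mu$ only'') is unsubstantiated in the relevant regime $0<s\le1/2$: the bound $\mathcal{E}(\omega)\le C\|\omega\|_1^2$ from Lemma \ref{lm5} uses \eqref{2-2} with $p=2$, which requires $s>1/2$, while for $0<s\le1/2$ the estimate involves $\|\omega\|_p$ with $p>1/s$, exactly the quantity that may grow with $\Gamma$. Third, your displayed truncation inequality is not correct as written: $\omega\le(\omega-t)_++t\,1_{\{\omega>0\}}$, not $(\omega-t)_++t\,1_{A_t}$, so the contribution of $\omega$ on $\{0<\omega\le t\}$ must be kept (it is controlled, after its own near/far splitting, by $Ct^{1-s}$ — but that splitting is again the absorption estimate you avoided). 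An iteration based solely on $\omega\le\mathcal{G}_s\omega$, $\|\omega\|_1\le1$ and Chebyshev could likely be completed, but as proposed it is neither carried out nor needed, and the auxiliary claims you lean on would have to be removed or proved independently.
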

\begin{proof}
	Let $\omega\in 	 \Sigma_\mu^{s,\Gamma}$. By Lemma \ref{lm10}, we have
\begin{equation*}
  \omega(x)\leq (\mathcal{G}_s\omega(x)-Wx_2-\gamma)_+,\ \ \ \forall\,x\in\Pi,
\end{equation*}
for some $W,\gamma\geq 0$. Hence, by direct calculations, we derive
	\begin{align*}
		\omega(x)&\leq \left(\mathcal{G}_s\omega(x)-Wx_2-\gamma\right)_+\leq \mathcal{G}_s\omega(x)\\
		&\leq \int_\Pi \frac{c_{2,s}}{|x-y|^{2-2s}} \omega(y)dy\\
		&\leq \|\omega\|_\infty \int_{|x-y|<r} \frac{c_{2,s}}{|x-y|^{2-2s}} dy +\frac{c_{2,s}}{r^{2-2s}}\\
		&\leq C_1r^{2s}\|\omega\|_\infty +\frac{c_{2,s}}{r^{2-2s}}
	\end{align*}
for all $x\in\Pi$, where $C_1$ is a positive constant depending only on $s$.
Choosing $r$ such that $C_1r^{2s}={1}/{2}$, we infer from the above inequality that $\|\omega\|_\infty\leq M_0$ for some constant $M_0>0$ depending only on $s$. This completes the proof of Lemma \ref{lm11}.	
\end{proof}

As a consequence of Lemma \ref{lm11}, we have the following lemma.
\begin{lemma}\label{lmadd2}
  If $\Gamma>\Gamma_0:=\max\{\Gamma_1,M_0\}$, then $\Sigma_\mu^{s}=\Sigma_\mu^{s,\Gamma}$.
\end{lemma}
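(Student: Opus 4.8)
The plan is to exploit the $\Gamma$-independence of the $L^\infty$ bound furnished by Lemma \ref{lm11}. First I would record the elementary inclusions. Since $0\le\omega\le\Gamma$ forces $\omega\in L^\infty(\Pi)$, we have $\mathcal{A}^{s,\Gamma}_\mu\subset\mathcal{A}^{s,\Gamma'}_\mu\subset\mathcal{A}^s_\mu$ whenever $\Gamma\le\Gamma'$, and conversely every $\omega\in\mathcal{A}^s_\mu$ lies in $\mathcal{A}^{s,\Gamma'}_\mu$ as soon as $\Gamma'\ge\|\omega\|_\infty$. Moreover $\mathcal{E}(\omega)$ is finite for each $\omega\in\mathcal{A}^s_\mu$ by Lemma \ref{lm1}, since such $\omega$ belongs to $L^1(\Pi)\cap L^\infty(\Pi)$; this is precisely the reason the admissible class for $0<s\le 1/2$ was defined with an $L^\infty$ constraint. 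From the inclusions it is immediate that $\Gamma\mapsto S^{s,\Gamma}_\mu$ is nondecreasing and that $S^{s,\Gamma}_\mu\le S^s_\mu$.

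Next I would show that $S^{s,\Gamma}_\mu$ is constant for $\Gamma>\Gamma_0$ and equal to $S^s_\mu$. Fix $\Gamma_0<\Gamma<\Gamma'$ and pick a maximizer $\omega'\in\Sigma^{s,\Gamma'}_\mu$ (which exists by Lemma \ref{lm8}). Applying Lemma \ref{lm11} with the parameter $\Gamma'$ (valid since $\Gamma'>\Gamma_0\ge\Gamma_1$) gives $\|\omega'\|_\infty\le M_0\le\Gamma_0<\Gamma$, so $\omega'\in\mathcal{A}^{s,\Gamma}_\mu$ and hence $S^{s,\Gamma'}_\mu=\mathcal{E}(\omega')\le S^{s,\Gamma}_\mu$. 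Together with monotonicity this yields $S^{s,\Gamma}_\mu=S^{s,\Gamma'}_\mu=:S$ for all $\Gamma,\Gamma'>\Gamma_0$. To identify $S$ with $S^s_\mu$, take any $\omega\in\mathcal{A}^s_\mu$ and choose $\Gamma'>\max\{\Gamma_0,\|\omega\|_\infty\}$; then $\omega\in\mathcal{A}^{s,\Gamma'}_\mu$, so $\mathcal{E}(\omega)\le S^{s,\Gamma'}_\mu=S$, whence $S^s_\mu\le S$. The reverse inequality is the already noted $S^{s,\Gamma}_\mu\le S^s_\mu$, so $S^s_\mu=S$.

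Finally I would deduce the equality of the maximizer sets. If $\omega\in\Sigma^{s,\Gamma}_\mu$ then $\omega\in\mathcal{A}^{s,\Gamma}_\mu\subset\mathcal{A}^s_\mu$ and $\mathcal{E}(\omega)=S^{s,\Gamma}_\mu=S=S^s_\mu$, so $\omega\in\Sigma^s_\mu$. Conversely, let $\omega\in\Sigma^s_\mu$, so $\omega\in L^\infty(\Pi)$ with $\mathcal{E}(\omega)=S^s_\mu=S$; choosing $\Gamma'>\max\{\Gamma,\|\omega\|_\infty\}$ we have $\omega\in\mathcal{A}^{s,\Gamma'}_\mu$ and $\mathcal{E}(\omega)=S=S^{s,\Gamma'}_\mu$, hence $\omega\in\Sigma^{s,\Gamma'}_\mu$, and Lemma \ref{lm11} forces $\|\omega\|_\infty\le M_0\le\Gamma_0<\Gamma$. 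Therefore $\omega\in\mathcal{A}^{s,\Gamma}_\mu$ with $\mathcal{E}(\omega)=S=S^{s,\Gamma}_\mu$, i.e. $\omega\in\Sigma^{s,\Gamma}_\mu$. This establishes $\Sigma^s_\mu=\Sigma^{s,\Gamma}_\mu$.

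I do not expect any real obstacle here: the whole argument is bookkeeping resting on the single nontrivial input, namely that the constant $M_0$ in Lemma \ref{lm11} is independent of $\Gamma$. The only point demanding a touch of care is to make sure $\mathcal{E}$ is meaningful on the entire class $\mathcal{A}^s_\mu$ before comparing suprema, which is guaranteed by Lemma \ref{lm1}.
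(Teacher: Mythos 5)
Your proof is correct and follows exactly the route the paper intends: the paper states this lemma as an immediate consequence of Lemma \ref{lm11}, and your argument is simply the careful bookkeeping (inclusions, monotonicity of $\Gamma\mapsto S^{s,\Gamma}_\mu$, and the $\Gamma$-independent bound $\|\omega\|_\infty\le M_0$) that makes that consequence explicit. No gaps.
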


Similar to Lemma \ref{lmadd1}, we also have the following result.
\begin{lemma}\label{lmadd3}
  If $0<\mu_1<\mu_2<\infty$, then $S^s_{\mu_1}<S^s_{\mu_2}$.
\end{lemma}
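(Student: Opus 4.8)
The plan is to run the same argument that yields Lemma~\ref{lmadd1} (the one implicit in the proof of Lemma~\ref{lm6}); the only new point is that, in the range $0<s\le 1/2$, one must first produce a genuine maximizer of $\mathcal{E}$ over $\mathcal{A}^s_{\mu_1}$ by passing through the truncated problem. Fix $\Gamma>\Gamma_0$. By Lemma~\ref{lm8} there is a maximizer $\omega_1\in\Sigma_{\mu_1}^{s,\Gamma}$, and by Lemma~\ref{lmadd2} in fact $\omega_1\in\Sigma_{\mu_1}^{s}$, so that $\omega_1\in\mathcal{A}^s_{\mu_1}$, $\mathcal{E}(\omega_1)=S^s_{\mu_1}$, and $\|\omega_1\|_\infty\le M_0$ by Lemma~\ref{lm11}. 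Since $S^s_{\mu_1}>0$ (by Lemma~\ref{lm7} together with Lemma~\ref{lmadd2}), $\omega_1$ is not identically zero, hence $m:=\int_\Pi\omega_1\,dx\in(0,1]$.

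Next I would translate $\omega_1$ upward in the $x_2$-direction so as to raise its impulse from $\mu_1$ to $\mu_2$. Put $\tau:=(\mu_2-\mu_1)/m>0$ and
\begin{equation*}
	\omega_1^\tau(x_1,x_2):=\begin{cases}\omega_1(x_1,x_2-\tau),& x_2>\tau,\\ 0,& 0<x_2\le\tau.\end{cases}
\end{equation*}
A change of variables gives $\|\omega_1^\tau\|_q=\|\omega_1\|_q$ for all $1\le q\le\infty$, $\int_\Pi\omega_1^\tau\,dx=m\le 1$, and $\int_\Pi x_2\omega_1^\tau\,dx=\int_\Pi(x_2+\tau)\omega_1\,dx=\mu_1+\tau m=\mu_2$; hence $\omega_1^\tau\in\mathcal{A}^s_{\mu_2}$, and $E(\omega_1^\tau)$ is finite by Lemma~\ref{lm1}. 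The crucial observation is that this translation strictly increases the kinetic energy. Indeed, writing $x^\tau:=(x_1,x_2+\tau)$ and $y^\tau:=(y_1,y_2+\tau)$ for $x,y\in\Pi$, one has $|x^\tau-y^\tau|=|x-y|$, while $|x^\tau-\overline{y^\tau}|^2=(x_1-y_1)^2+(x_2+y_2+2\tau)^2>|x-\bar y|^2$; since $t\mapsto t^{-(2-2s)}$ is positive and decreasing on $(0,\infty)$, this yields $G_\Pi(x^\tau,y^\tau)>G_\Pi(x,y)$ for every $x,y\in\Pi$. Combining this with $\omega_1\ge0$, $\omega_1\not\equiv0$ and a change of variables in the double integral defining $E$, we get
\begin{equation*}
	E(\omega_1^\tau)=\frac{1}{2}\int_\Pi\int_\Pi\omega_1(x)\,G_\Pi(x^\tau,y^\tau)\,\omega_1(y)\,dx\,dy>E(\omega_1).
\end{equation*}

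Since also $\int_\Pi(\omega_1^\tau)^2\,dx=\int_\Pi\omega_1^2\,dx$, it follows that $\mathcal{E}(\omega_1^\tau)=E(\omega_1^\tau)-\tfrac12\int_\Pi(\omega_1^\tau)^2\,dx>\mathcal{E}(\omega_1)=S^s_{\mu_1}$, and since $\omega_1^\tau\in\mathcal{A}^s_{\mu_2}$ this gives $S^s_{\mu_2}\ge\mathcal{E}(\omega_1^\tau)>S^s_{\mu_1}$, which is the assertion. I do not expect any real obstacle here: apart from the bookkeeping of the three constraints, the substantive points are the pointwise monotonicity $G_\Pi(x^\tau,y^\tau)>G_\Pi(x,y)$ of the half-plane Green's function under a simultaneous upward shift of both arguments — which is elementary — and the strictness in $E(\omega_1^\tau)>E(\omega_1)$, which is immediate from $\omega_1\not\equiv0$ and the finiteness of $E(\omega_1)$ afforded by Lemma~\ref{lm1}.
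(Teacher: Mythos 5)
Your proposal is correct and follows essentially the same route the paper intends: Lemma \ref{lmadd3} is asserted "similar to Lemma \ref{lmadd1}", which in turn comes from the $x_2$-translation trick at the end of the proof of Lemma \ref{lm6} — translate a maximizer at impulse $\mu_1$ upward, note the direct part of $G_\Pi$ is unchanged while the image-term distance strictly increases, and conclude $S^s_{\mu_2}\ge\mathcal{E}(\omega_1^\tau)>S^s_{\mu_1}$. Your additional care in the range $0<s\le 1/2$ (producing the maximizer via Lemmas \ref{lm8}, \ref{lm11}, \ref{lmadd2}) is exactly what the paper's framework supplies, so there is no substantive difference.
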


\section{Uniqueness of maximizers}
In the previous section, we proved the existence of maximizers for $\mathcal{E}$ over $\mathcal{A}_\mu^s$. In this section, we show the uniqueness of maximizers. Arguing as in the proof of Lemma \ref{lm10}, we can establish the following result.
\begin{lemma}\label{lm12}
Let $0<s<1$. Suppose $\omega\in \Sigma^s_\mu$, then there exist two nonnegative constants $W$ and $\gamma$ such that
	\begin{equation}\label{3-1}
		\omega=\left(\mathcal{G}_s\omega-Wx_2-\gamma\right)_+.
	\end{equation}
	Moreover, $W$ and $\gamma$ are uniquely determined by $\omega$.
\end{lemma}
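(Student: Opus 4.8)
The plan is to reduce the problem to the already-established Lemma \ref{lm10} by exploiting Lemma \ref{lmadd2}. Fix $\omega\in\Sigma^s_\mu$. When $1/2<s<1$, $\omega$ lies in $L^1(\Pi)\cap L^2(\Pi)$ and in fact, by the argument in Lemma \ref{lm6} together with the bootstrap in Lemma \ref{lm11}, $\omega\in L^\infty(\Pi)$ with $\|\omega\|_\infty\le M_0$ for the same constant $M_0$ appearing there (one runs the same calculation $\omega(x)\le\mathcal G_s\omega(x)\le C_1 r^{2s}\|\omega\|_\infty + c_{2,s}r^{2s-2}$, which is legitimate once we know $\omega$ is bounded; to get boundedness in the first place one uses $\mathcal G_s\omega\in L^\infty$ from Lemma \ref{lm1} and the Euler--Lagrange inequality). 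When $0<s\le1/2$ we have by definition $\omega\in L^\infty(\Pi)$. In either case, pick any $\Gamma>\Gamma_0:=\max\{\Gamma_1,M_0\}$ large enough that also $\Gamma>\|\omega\|_\infty$; then $\omega\in\mathcal A^{s,\Gamma}_\mu$, and since $\mathcal E(\omega)=S^s_\mu=S^{s,\Gamma}_\mu$ (the last equality by Lemma \ref{lmadd2}), $\omega$ is a maximizer of $\mathcal E$ over $\mathcal A^{s,\Gamma}_\mu$, i.e. $\omega\in\Sigma^{s,\Gamma}_\mu$.

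Now apply Lemma \ref{lm10}: there are constants $W,\gamma\ge0$, uniquely determined by $\omega$, satisfying the three-line system \eqref{2-12}. Since $\|\omega\|_\infty\le M_0<\Gamma$, the set $\{\omega=\Gamma\}$ is empty (up to measure zero), so the third line of \eqref{2-12} is vacuous. The first two lines then say exactly that $\mathcal G_s\omega-Wx_2-\gamma-\omega\le0$ on $\{\omega=0\}$ and $\mathcal G_s\omega-Wx_2-\gamma-\omega=0$ on $\{0<\omega\}$, and since $\omega\ge0$ everywhere these combine into the single pointwise identity
\begin{equation*}
	\omega=\bigl(\mathcal G_s\omega-Wx_2-\gamma\bigr)_+\quad\text{a.e. in }\Pi,
\end{equation*}
which is \eqref{3-1}. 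For uniqueness of $W,\gamma$: if $\hat W,\hat\gamma\ge0$ also satisfy \eqref{3-1}, then on the set $\{\omega>0\}$ one gets $(\hat W-W)x_2=\gamma-\hat\gamma$; this set has positive measure (indeed positive measure with $x_2$ taking at least two distinct values there, since $\omega\not\equiv0$ and $\mathcal G_s\omega$ is continuous and not identically affine in $x_2$), forcing $\hat W=W$ and $\hat\gamma=\gamma$. Alternatively one invokes directly the uniqueness clause of Lemma \ref{lm10}.

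The one point requiring genuine care — the main obstacle — is establishing, in the range $1/2<s<1$, that an arbitrary maximizer $\omega\in\Sigma^s_\mu$ is actually bounded and not merely $L^2$, so that the passage to $\mathcal A^{s,\Gamma}_\mu$ is legitimate. This is a standard elliptic bootstrap: from $\omega\in L^2$ and Lemma \ref{lm1} (with $p=2>1/s$) we get $\mathcal G_s\omega\in L^\infty$; the Euler--Lagrange inequality for the $L^2$-functional $\mathcal E$ over $\mathcal A^s_\mu$ (derived by the same variational perturbation $\omega_\tau=\omega+\tau\eta$ as in Lemma \ref{lm10}, now without the upper constraint) yields $0\le\omega=(\mathcal G_s\omega-Wx_2-\gamma)_+\le\mathcal G_s\omega\in L^\infty$, hence $\|\omega\|_\infty\le C(\|\omega\|_1+\|\omega\|_2)<\infty$, and then the $r$-splitting estimate of Lemma \ref{lm11} improves this to the $\Gamma$-independent bound $\|\omega\|_\infty\le M_0$. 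Once this is in hand the rest of the proof is the purely formal reduction described above.
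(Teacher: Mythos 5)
Your proof is correct in substance, and its decisive step is in fact the same as the paper's: the paper proves Lemma \ref{lm12} simply by ``arguing as in the proof of Lemma \ref{lm10}'', i.e.\ by running the Turkington perturbation $\omega_\tau=\omega+\tau\eta$ directly over $\mathcal{A}^s_\mu$ (no upper constraint, hence no third line as in \eqref{2-12}), then obtaining $W,\gamma\ge 0$ from the decay of $\mathcal{G}_s\omega$ (Lemma \ref{lm2}) along sequences where $\omega\to 0$, and uniqueness from the positive measure of $\{\omega>0\}$ --- which is precisely what your final paragraph does for $1/2<s<1$. Your extra packaging --- reducing to $\Sigma^{s,\Gamma}_\mu$ and quoting Lemma \ref{lm10} with the line on $\{\omega=\Gamma\}$ vacuous --- is a legitimate alternative for $0<s\le 1/2$, where $\mathcal{A}^s_\mu=\bigcup_{\Gamma}\mathcal{A}^{s,\Gamma}_\mu$ and Lemma \ref{lmadd2} applies; but note that Lemma \ref{lmadd2} belongs to the subsection treating $0<s\le 1/2$ and is not available as stated for $1/2<s<1$, where $\mathcal{A}^s_\mu$ is the $L^2$-based class rather than a union of the truncated classes. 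This does not damage your argument, because once your bootstrap yields $\|\omega\|_\infty<\infty$ the facts $S^s_\mu=S^{s,\Gamma}_\mu$ and $\omega\in\Sigma^{s,\Gamma}_\mu$ follow from the inclusion $\mathcal{A}^{s,\Gamma}_\mu\subset\mathcal{A}^s_\mu$ alone --- though at that point the unconstrained Euler--Lagrange relation you derived is already the statement of the lemma, so the detour through Lemma \ref{lm10} is redundant in that range. One ordering point to make explicit in the bootstrap: the inequality $\omega=\left(\mathcal{G}_s\omega-Wx_2-\gamma\right)_+\le\mathcal{G}_s\omega$ uses $W,\gamma\ge 0$, so the signs must be established first; this is done exactly as in Lemma \ref{lm10}, using only $\int_\Pi\omega\,dx\le 1$ and \eqref{2-4} (valid here with $p=2>1/s$), and it requires no boundedness of $\omega$, so there is no circularity.
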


Note that if $\omega\in \Sigma^s_\mu$ and $\gamma>0$, then $\text{supp}(\omega)$ and $x_1$-axis have a positive distance. The following lemma shows that if
$\mu$ is sufficiently small, then $W>0$ and $\gamma=0$.
\begin{lemma}\label{lm13}
There exists a constant $\mu_0>0$ such that if $0<\mu\leq \mu_0$, then the constants $W, \gamma$ in Lemma \ref{lm12} satisfy $W>0$ and $\gamma=0$.
\end{lemma}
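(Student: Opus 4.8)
\textit{Plan of proof.} The argument I would give combines a Pohozaev-type identity with an improvement-by-rescaling device. By Lemma~\ref{lm12}, $\mathcal{G}_s\omega=\omega+Wx_2+\gamma$ on $\{\omega>0\}$, so multiplying by $\omega$ and integrating (recall $\int_\Pi x_2\omega\,dx=\mu$ and $\int_\Pi\omega\,dx=\|\omega\|_1$) gives
\[
 2E(\omega)=\|\omega\|_2^2+W\mu+\gamma\|\omega\|_1,\qquad\text{hence}\qquad
 S^s_\mu=\mathcal{E}(\omega)=\tfrac12\bigl(W\mu+\gamma\|\omega\|_1\bigr).
\]
Since $S^s_\mu>0$ by Lemma~\ref{lm5} (resp.\ Lemma~\ref{lm7}), once $\gamma=0$ is known we obtain $W=2S^s_\mu/\mu>0$, so the whole statement reduces to proving $\gamma=0$ for $\mu$ small. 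I would also record two preliminary facts. First, testing with the admissible dilations $\omega_\sigma:=\sigma^3\omega(\sigma\cdot)$, which lie in $\mathcal{A}^s_\mu$ for $0<\sigma\le\|\omega\|_1^{-1}$ and satisfy $\mathcal{E}(\omega_\sigma)=\sigma^{4-2s}E(\omega)-\tfrac12\sigma^4\|\omega\|_2^2$, maximality at $\sigma=1$ forces $(4-2s)E(\omega)\ge 2\|\omega\|_2^2$, i.e.\ $E(\omega)\ge\frac1{2-s}\|\omega\|_2^2$, with equality whenever $\|\omega\|_1<1$ (then $\sigma=1$ is an interior point); in that case $E(\omega)=\frac1{2-s}\|\omega\|_2^2<\frac1{2(1-s)}\|\omega\|_2^2$ because $s>0$. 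Second, optimizing over $\tau$ the test function $\tau^3{1}_{\Pi\cap B_r(0)}(\tau\cdot)$ from the proof of Lemma~\ref{lm5} ($r$ fixed by the impulse; the optimal $\tau$ yields mass $\lesssim\mu<1$, hence admissibility) gives $S^s_\mu\ge c_1\mu^2$ for a dimensional constant $c_1>0$.

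\textit{Obtaining $\gamma=0$.} Suppose $\gamma>0$. From the kernel bounds $G_\Pi(x,y)\le c_{2,s}|x-y|^{2s-2}$ and $G_\Pi(x,y)\le 4x_2y_2|x-y|^{2s-4}$ of Lemma~\ref{lm2}, together with the uniform bounds $\|\omega\|_1\le1$, $\|\omega\|_\infty\le M_0$ (Lemma~\ref{lm11}), a careful splitting of the integral yields $\mathcal{G}_s\omega(x)\le Cx_2^{\beta}$ for $x_2\le1$ with $\beta\in(0,1]$ depending only on $s$; hence $\omega=(\mathcal{G}_s\omega-Wx_2-\gamma)_+$ vanishes near $\partial\Pi$, $\text{supp}(\omega)\subset\{x_2\ge d\}$ for some $d>0$, and $Wd+\gamma\le Cd^{\beta}$, which with $\mu\ge d\|\omega\|_1$ gives $S^s_\mu\le Cd^{\beta-1}\mu$. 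I would then slide the vortex down to the boundary and restore the impulse by rescaling: set $\omega^\flat(x):=\omega(x_1,x_2+d)$, so $\text{supp}(\omega^\flat)\subset\overline\Pi$, $\int_\Pi x_2\omega^\flat\,dx=\mu-d\|\omega\|_1\in(0,\mu)$, $\|\omega^\flat\|_q=\|\omega\|_q$, and $E(\omega^\flat)\le E(\omega)$ (moving the mass toward $\partial\Pi$ increases the subtracted image-charge term in $G_\Pi$, hence decreases $E$); then put $\widetilde\omega:=\rho^2\omega^\flat(\rho\cdot)$ with $\rho:=1-\tfrac{d\|\omega\|_1}{\mu}\in(0,1)$, which belongs to $\mathcal{A}^s_\mu$ (impulse $\mu$, mass unchanged) and satisfies $E(\widetilde\omega)=\rho^{2-2s}E(\omega^\flat)$, $\|\widetilde\omega\|_2^2=\rho^2\|\omega\|_2^2$. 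Expanding to first order in $1-\rho=d\|\omega\|_1/\mu$, the inequality $\mathcal{E}(\widetilde\omega)>\mathcal{E}(\omega)$ reduces to
\[
 E(\omega)-E(\omega^\flat)\;<\;(1-\rho)\bigl(\|\omega\|_2^2-(2-2s)E(\omega)\bigr)+o(1-\rho),
\]
which holds provided $E(\omega)<\frac1{2(1-s)}\|\omega\|_2^2$ and the energy defect $E(\omega)-E(\omega^\flat)$ of the downward shift by $d$ — estimated using $\text{supp}(\omega)\subset\{x_2\ge d\}$ and $\mathcal{G}_s\omega\lesssim x_2^{\beta}$ — is negligible compared with $1-\rho$. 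The condition $E(\omega)<\frac1{2(1-s)}\|\omega\|_2^2$ is automatic when $\|\omega\|_1<1$; the degenerate case $\|\omega\|_1=1$ is treated separately, by a variational perturbation that both adds and removes a small amount of mass on $\{\omega\ge\delta_0\}$, corrected by a test function preserving $\int_\Pi x_2\omega\,dx$ — this pins $\gamma=0$ as soon as the mass constraint is inactive — while a comparison of $S^s_\mu\ge c_1\mu^2$ with sharper bounds on $S^s_\mu,W,\gamma$ in the regime $\gamma>0$ shows the mass constraint cannot be active for $\mu\le\mu_0$. In each case we contradict the maximality of $\omega$; hence $\gamma=0$, and therefore $W=2S^s_\mu/\mu>0$.

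\textit{Where the difficulty lies.} The heart of the matter is the quantitative version of the displayed inequality: one must show that the kinetic energy lost when the support is slid down to $\partial\Pi$ (controlled by $d=\text{dist}(\text{supp}\,\omega,\partial\Pi)$, which the energy comparison forces to be small relative to the vortex scale) is strictly dominated by the gain $(1-\rho)\bigl(\|\omega\|_2^2-(2-2s)E(\omega)\bigr)$ of the subsequent rescaling. Making this precise requires the fine kernel estimate $G_\Pi(x,y)\le 4x_2y_2|x-y|^{2s-4}$, the boundary decay $\mathcal{G}_s\omega(x)\lesssim x_2^{\beta}$, and a careful treatment of the borderline case $\int_\Pi\omega\,dx=1$; I expect this case distinction together with the matching of the two competing quantities to be the main technical obstacle.
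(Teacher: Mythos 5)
Your opening reductions are sound and in fact coincide with the paper's: the identity $2E(\omega)=\|\omega\|_2^2+W\mu+\gamma\|\omega\|_1$ (hence $S^s_\mu=\tfrac12(W\mu+\gamma\|\omega\|_1)$) follows from Lemma \ref{lm12}, $W>0$ is immediate once $\gamma=0$ since $W\mu=2\mathcal{E}(\omega)=2S^s_\mu>0$, and $\gamma=0$ does follow from the first-variation argument as soon as the mass constraint is inactive, i.e.\ $\int_\Pi\omega\,dx<1$. But this is exactly where your proposal has a genuine gap: the whole content of the lemma is to prove that $\mu\le\mu_0$ forces $\int_\Pi\omega\,dx<1$, and your argument never establishes this. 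Your slide-down-and-rescale contradiction requires $\|\omega\|_2^2>(2-2s)E(\omega)$; you obtain this only via the interior dilation identity $E(\omega)=\frac1{2-s}\|\omega\|_2^2$, which is available precisely when $\|\omega\|_1<1$ --- the case in which $\gamma=0$ is already immediate and the machinery is superfluous. In the only case where your construction is needed, $\|\omega\|_1=1$, the one-sided dilation gives merely $E(\omega)\ge\frac1{2-s}\|\omega\|_2^2$, which does not rule out $E(\omega)\ge\frac1{2-2s}\|\omega\|_2^2$; then the ``gain'' $(1-\rho)\bigl(\|\omega\|_2^2-(2-2s)E(\omega)\bigr)$ may be zero or negative and the contradiction evaporates. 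The fallback sentence (``a comparison of $S^s_\mu\ge c_1\mu^2$ with sharper bounds on $S^s_\mu,W,\gamma$ in the regime $\gamma>0$'') is not an argument, and even where the scheme could apply, the domination of the shift defect $E(\omega)-E(\omega^\flat)$ by the rescaling gain is left open --- you yourself flag it as the main obstacle. So the proof is incomplete at its decisive step.

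For comparison, the paper closes this step by a short direct estimate rather than any deformation argument: the impulse constraint gives $\int_{\{x_2\ge2\mu\}}\omega\,dx\le\tfrac12$, while on the strip $\{0<x_2<2\mu\}$ one uses $\omega\le\mathcal{G}_s\omega$ pointwise together with the two kernel bounds ($G_\Pi\le c_{2,s}|x-y|^{2s-2}$ and $G_\Pi\le 4x_2y_2|x-y|^{2s-4}$), split according to $x_2\lessgtr4\mu$, to get $\int_{\{0<x_2<2\mu\}}\omega\,dx\le C\mu^{2s}$. Hence $\|\omega\|_1\le\tfrac12+C\mu^{2s}<1$ for $\mu\le\mu_0$, the mass multiplier drops out of the Euler--Lagrange relation (test with $\eta=h-(\int_\Pi x_2h\,dx)h_2$ only), giving $\omega=(\mathcal{G}_s\omega-Wx_2)_+$, and then $W\mu=2\mathcal{E}(\omega)>0$. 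If you want to salvage your route, you should replace the vague ``comparison'' step by an estimate of this type showing the constraint $\int_\Pi\omega\,dx\le1$ is slack for small $\mu$; once that is done, the rest of your scaffolding (sliding, rescaling, the boundary decay $\mathcal{G}_s\omega\lesssim x_2^\beta$) is no longer needed.
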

\begin{proof}
Let $\omega\in \Sigma^s_\mu$. We first prove $\gamma=0$ for small $\mu$. Since
\begin{equation*}
  \mu=\int_\Pi x_2\omega dx \geq 2\mu\int_{x_2\geq 2\mu} \omega dx,
\end{equation*}
we have
	\begin{equation}\label{3-2}
		\int_{x_2\geq2\mu} \omega dx\leq \frac{1}{2}.
	\end{equation}
	Now, we estimate $\int_{0<x_2<2\mu} \omega dx$.
	\begin{align}\label{3-3}
		\int_{0<x_2<2\mu} \omega dx&=\int_{0<x_2<2\mu} \left(\mathcal{G}_s\omega-Wx_2-\gamma\right)_+dx\leq \int_{0<x_2<2\mu} \mathcal{G}_s\omega dx\nonumber\\
		& \leq \int_{0<x_2<4\mu}\omega(x)\int_{0<y_2<2\mu} G_\Pi(x,y)dydx+ \int_{x_2\geq 4\mu}\omega(x)\int_{0<y_2<2\mu} G_\Pi(x,y)dydx.
	\end{align}
	On the one hand, for $x_2\geq 4\mu$ ,
	\begin{align*}
		\int_{0<y_2<2\mu} G_\Pi(x,y)dy
		&=\int_{0<y_2<2\mu} c_{2,s}\left(\frac{1}{|x-y|^{2-2s}}-\frac{1}{|x-\bar{y}|^{2-2s}}\right)dy\\
		&\leq \int_{0<y_2<2\mu} \frac{4c_{2,s}x_2y_2}{|x-y|^{4-2s}}dy\\
		&\leq Cx_2 \mu^{2s-1}.
	\end{align*}
	Hence, we have
	\begin{equation}\label{3-4}
		\int_{x_2\geq 4\mu}\omega(x)\int_{0<y_2<2\mu} G_\Pi(x,y)dydx\leq C\mu^{2s-1}\int_\Pi x_2\omega dx=C\mu^{2s}.
	\end{equation}
	On the other hand, for $0<x_2<4\mu$, we estimate
	\begin{align*}
		\int_{0<y_2<2\mu} G_\Pi(x,y)dy&=\int_{0<y_2<2\mu} c_{2,s}\left(\frac{1}{|x-y|^{2-2s}}-\frac{1}{|x-\bar{y}|^{2-2s}}\right)dy\\
		&\leq \int_{0<y_2<2\mu,|x-y|<8\mu} \frac{c_{2,s}}{|x-y|^{2-2s}}dy+\int_{0<y_2<2\mu,|x-y|\geq 8\mu} \frac{4c_{2,s} x_2y_2}{|x-y|^{4-2s}}dy\\
		&\leq \int_{|x-y|<8\mu} \frac{c_{2,s}}{|x-y|^{2-2s}}dy+C\int_{|x_1-y_1|\geq 2\mu}\int_{0<y_2<2\mu}\frac{x_2y_2}{|x_1-y_1|^{4-2s}}dy_1dy_2\\
		&\leq C( \mu^{2s}+x_2 \mu^{2s-1}),
	\end{align*}
	which implies
	\begin{equation}\label{3-5}
		\int_{0<x_2< 4\mu}\omega(x)\int_{0<y_2<2\mu} G_\Pi(x,y)dydx\leq C\mu^{2s}\int_\Pi \omega dx+C\mu^{2s-1}\int_\Pi x_2\omega dx\leq C\mu^{2s}.
	\end{equation}
	Combining \eqref{3-2}, \eqref{3-3}, \eqref{3-4} and \eqref{3-5}, we derive
	\begin{equation}\label{3-6}
		\int_\Pi \omega dx\leq \frac{1}{2}+C\mu^{2s}.
	\end{equation}	
	Hence, there exists $\mu_0>0$ small such that for $0<\mu\leq \mu_0$, $$\int_\Pi \omega dx<1.$$
	If $\int_\Pi \omega dx<1$, we can take $$\eta=h-\left(\int_\Pi x_2hdx\right)h_2.$$ And taking $\omega+\tau \eta$ for sufficiently small $\tau>0$ as the test function in the proof of Lemma \ref{lm10}, we conclude $$\omega=(\mathcal{G}_s\omega-Wx_2)_+, $$ which means $\gamma=0$.
	
	Next, we prove $W>0$. Indeed, by \eqref{3-1}, we have
	\begin{align*}
		0&<\int_\Pi \omega \mathcal{G}_s\omega dx -\int_\Pi \omega^2 dx\\
		&=\int_\Pi \omega \mathcal{G}_s\omega dx -\int_\Pi \omega(\mathcal{G}_s\omega -Wx_2)_+ dx\\
		&=\int_\Pi \omega \mathcal{G}_s\omega dx -\int_\Pi \omega(\mathcal{G}_s\omega  -Wx_2) dx\\
		&=W\mu.
	\end{align*}
Thus, we conclude $W>0$ and the proof of Lemma \ref{lm13} is completed.
\end{proof}

The property $W>0$ or $\gamma>0$ implies that the support of a maximizer is a bounded set.
\begin{lemma}\label{lm14}
Suppose $\omega\in \Sigma^s_\mu$, then $\text{supp}(\omega)$ is a bounded set in $\Pi$.
\end{lemma}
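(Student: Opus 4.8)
The plan is to use the variational characterization from Lemma \ref{lm12} together with the decay estimates of Lemma \ref{lm2} and Lemma \ref{lm4} to show that $\mathcal{G}_s\omega - W x_2 - \gamma$ cannot stay positive far from the origin, so that $\omega$, being equal to $(\mathcal{G}_s\omega - W x_2 - \gamma)_+$, must vanish outside a bounded region. Concretely, fix $\omega\in\Sigma^s_\mu$ and let $W,\gamma\ge 0$ be the constants given by Lemma \ref{lm12}, so that $\omega = (\mathcal{G}_s\omega - W x_2 - \gamma)_+$ pointwise on $\Pi$. By Lemma \ref{lm13} we may as well distinguish the two cases $\gamma>0$ and $W>0$ (and at least one of these holds once, say, $\mu\le\mu_0$; but the argument below only uses $W>0$ \emph{or} $\gamma>0$, which is automatic since $\mathcal{E}(\omega)=S^s_\mu>0$ forces $\omega\not\equiv 0$, hence the last displayed computation in the proof of Lemma \ref{lm13} gives $W\mu = \int_\Pi \omega\,\mathcal{G}_s\omega - \int_\Pi\omega^2 > 0$ whenever $\gamma = 0$).

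\textbf{Step 1: the part of the support with $x_2$ large is empty.} Using Lemma \ref{lm11} (or the uniform $L^\infty$ bound already available for maximizers over $\mathcal{A}^s_\mu$), $\omega$ is bounded, hence $\omega\in L^1(\Pi)\cap L^p(\Pi)$ for every $p$, and in particular $x_2\omega\in L^1(\Pi)$ with $\|x_2\omega\|_1 = \mu$. If $\gamma>0$: on the set $\{\omega>0\}$ one has $\mathcal{G}_s\omega(x) > W x_2 + \gamma \ge \gamma$, while Lemma \ref{lm2}, estimate \eqref{2-4}, gives $\mathcal{G}_s\omega(x)\to 0$ as $|x|\to\infty$; hence $\{\omega>0\}$ is contained in a bounded set and we are done. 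If instead $W>0$ (and possibly $\gamma=0$): on $\{\omega>0\}$ we have $\mathcal{G}_s\omega(x) > W x_2$, so by estimate \eqref{2-3} of Lemma \ref{lm2},
\begin{equation*}
	W x_2 < \mathcal{G}_s\omega(x) \le C\left( x_2^{2s-\frac{2}{q}}\|\omega\|_q + x_2^{2s-3}\|x_2\omega\|_1\right)
\end{equation*}
for a fixed $q\in(1/s,\infty)$ with $2s - 2/q < 1$ (possible since $2s-2/q \to 2s - 2/p_0$ and we may take $q$ close to $1/s$ so that $2s - 2/q$ is close to $2s - 2 < 0$, certainly $<1$; also $2s - 3 < 1$). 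Dividing by $x_2$, the right-hand side is $C(x_2^{2s - 2/q - 1} + x_2^{2s-4})$, which tends to $0$ as $x_2\to\infty$; therefore $x_2$ is bounded on $\{\omega>0\}$, say $x_2 \le R_1$ there.

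\textbf{Step 2: the part of the support with $|x_1|$ large is empty.} After a translation in $x_1$ we may assume (by Lemma \ref{lm3} / the Steiner symmetry available for maximizers, which is preserved by translation) that $\omega$ is Steiner symmetric; then Lemma \ref{lm4} applies and gives, on $\{\omega>0\}$ in the case $W>0$,
\begin{equation*}
	W x_2 < \mathcal{G}_s\omega(x) \le C\left(|x_1|^{-\frac12}\|\omega\|_1 + |x_1|^{-\frac{1}{2p}}\|\omega\|_p + \frac{x_2}{|x_1|^{2-s}}\|x_2\omega\|_1\right),
\end{equation*}
and in the case $\gamma>0$ the left side can be replaced by $\gamma$. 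Since on the support $x_2$ is already bounded (Step 1) and positive on the relevant part, while the right-hand side $\to 0$ as $|x_1|\to\infty$, we conclude that $|x_1|$ is bounded on $\{\omega>0\}$ as well; combined with Step 1 this shows $\mathrm{supp}(\omega)$ is contained in a bounded subset of $\overline{\Pi}$. Finally, to get that the support is bounded \emph{in $\Pi$}, i.e. stays away from $\partial\Pi$ only when $\gamma>0$: in the case $\gamma>0$ the bound $\mathcal{G}_s\omega>\gamma$ on $\{\omega>0\}$ together with the estimate $\mathcal{G}_s\omega(x)\le C x_2^{2s - 2/q}\|\omega\|_q + C x_2^{2s-3}\|x_2\omega\|_1$ from \eqref{2-3} — more precisely the first-term estimate $\mathcal{G}_s\omega(x) \le C(x_2^{2s - 2/q}\|\omega\|_q + \dots)$ — does not force $x_2$ away from $0$; but the claim here is only \emph{bounded}, not compactly contained, so boundedness suffices and the lemma follows. (If $\gamma=0$ the support may touch $\partial\Pi$; this is consistent with Theorem \ref{Uq}, where the support is a half-disk centered at the origin.)

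\textbf{Main obstacle.} The delicate point is the interplay of exponents in Step 1: one must be sure that for an admissible choice of $q$ the exponent $2s - 2/q - 1$ appearing after dividing the estimate \eqref{2-3} by $x_2$ is genuinely negative (so that large $x_2$ is excluded) and simultaneously that the other exponent $2s - 4$ is negative — the latter is trivial, but the former requires $q < \frac{2}{2s-1}$ when $s>\tfrac12$, which is compatible with $q>1/s$ precisely because $\tfrac1s < \tfrac{2}{2s-1}$ for $s<1$; in the range $0<s\le\tfrac12$ the exponent $2s-2/q-1$ is automatically negative for any $q>1/s$. One should also be a little careful that Lemma \ref{lm4} is stated for Steiner-symmetric $\omega$, so the reduction to that case (harmless, since translating does not change the support's boundedness) must be recorded explicitly. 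Everything else is a direct application of the already-established pointwise estimates on $\mathcal{G}_s\omega$.
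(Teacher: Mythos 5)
Your reduction to the two cases $\gamma>0$ and ($\gamma=0$, $W>0$) matches the paper, and your Step 1 (boundedness of $x_2$ on the support, by dividing \eqref{2-3} by $x_2$ with a choice of $q\in(1/s,\,2/(2s-1))$ when $s>1/2$) is correct, as is the case $\gamma>0$, which the paper also dispatches in one line via \eqref{2-4}. The genuine gap is in Step 2, precisely in the hard case $\gamma=0$, $W>0$. There the only information on the support is $Wx_2<\mathcal{G}_s\omega(x)$, and this inequality carries no content when $x_2$ is small: the bounds \eqref{2-3} and \eqref{2-7} contain the terms $|x_1|^{-1/2}\|\omega\|_1$ and $|x_1|^{-1/(2p)}\|\omega\|_p$ (and $x_2^{2s-2/q}\|\omega\|_q$) which do \emph{not} carry a factor of $x_2$, so for any large $|x_1|$ the inequality $Wx_2<C(|x_1|^{-1/2}+|x_1|^{-1/(2p)}+x_2|x_1|^{s-2})$ is perfectly consistent with points of the support having $x_2\lesssim |x_1|^{-1/2}$. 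Your sentence ``$x_2$ is bounded and positive on the relevant part, while the right-hand side tends to $0$'' does not close this: there is no uniform positive lower bound for $x_2$ on $\{\omega>0\}$ (indeed the support touches $\partial\Pi$; by Theorem \ref{Uq} it is a half-disk), so a thin unbounded strip near $x_2=0$ is not excluded by these pointwise estimates. This boundary strip is exactly the difficulty the paper's proof is designed to handle: it extends $\omega$ and $\psi=\mathcal{G}_s\omega$ oddly to $\mathbb{R}^2$, uses Riesz-potential estimates, Sobolev embedding and fractional regularity with a bootstrap to get $\bar\psi\in C^{1,\alpha}(\mathbb{R}^2)$, writes $\psi(x)/x_2=\int_0^1\partial_2\psi(x_1,\tau x_2)\,d\tau$ to obtain a uniform $C^\alpha(\overline\Pi)$ bound on $\psi/x_2$, and combines this with Hardy's inequality ($\|\psi/x_2\|_q\le\|\nabla\psi\|_q<\infty$) to conclude $\psi(x)/x_2\to0$ as $|x|\to\infty$; since $\psi/x_2>W>0$ on the support, boundedness follows. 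Some argument of this quantitative type, giving decay of $\mathcal{G}_s\omega(x)/x_2$ uniformly up to the boundary, is what your proposal is missing.

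A secondary point: Lemma \ref{lm4} is stated only for Steiner symmetric functions, and a maximizer is not known a priori to coincide (up to an $x_1$-translation) with its Steiner symmetrization — Lemma \ref{lm3} only gives $E(\omega^*)\ge E(\omega)$ — so invoking it for $\omega\in\Sigma^s_\mu$ would itself need justification. But even granting the symmetry, the main obstruction above remains.
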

\begin{proof}
	Let $\omega\in \Sigma^s_\mu$. Then, by \eqref{3-1}, we have $\omega=(\mathcal{G}_s\omega-Wx_2-\gamma)_+$. We first consider the case $\gamma>0$. In this case, the conclusion follows easily from \eqref{2-4}. If $\gamma=0$, we can conclude from the proof of Lemma \ref{lm13} that $W>0$. By \eqref{2-1} and \eqref{3-1}, we have $\omega \in L^1(\Pi)\cap L^\infty(\Pi)$. We extend $\omega=(\mathcal{G}\omega-Wx_2)_+$ and the stream function $\psi=\mathcal{G}\omega$ to the whole space.
	Let $$\bar{\omega}(x)=\begin{cases} \omega(x),\quad &\text{if}\ \  x_2>0,\\ 0,\quad &\text{if}\ \ x_2=0, \\ -\omega(x_1,-x_2),\quad &\text{if}\ \ x_2<0,\end{cases}$$
	and
	$$\bar{\psi}(x)=\begin{cases} \psi(x),\quad &\text{if}\ \ x_2>0,\\ 0,\quad &\text{if}\ \ x_2=0, \\ -\psi(x_1,-x_2),\quad &\text{if}\ \ x_2<0.\end{cases}$$
	Then,
\begin{equation*}
  \bar{\psi}(x)=\int_{\mathbb{R}^2} \frac{c_{2,s}}{|x-y|^{2-2s}}\bar{\omega}(y)dy.
\end{equation*}
Since $ \bar{\omega}\in L^1(\Pi)\cap L^\infty(\mathbb{R}^2)$, we deduce $\bar{\psi}\in W^{2s,p}(\mathbb{R}^2)$ for all $2\leq p<\infty$ by the property of Riesz transformation (see Chapter 5 of  \cite{S70}).  Then, we have $||\bar{\psi}||_{C^\alpha(\mathbb{R}^2)}\leq C$ for some $\alpha>0$ small by Sobolev embedding theorem. By $\omega=(\mathcal{G}\omega-Wx_2)_+$ and definitions of $\bar{\psi}(x)$ and $\bar{\omega}$, we have $\|\bar{\omega}\|_{C^\alpha(\mathbb{R}^2)}\leq C$. We further infer from the regularity theory for fractional equations (see, e.g., Propositions 2.8 and 2.9 in \cite{S07}) and bootstrap argument that $\|\bar{\psi}\|_{C^{1,\alpha}(\mathbb{R}^2)}\leq C$ for some small $\alpha>0$. Thus, $\|\nabla \bar{\psi}\|_{C^{\alpha}(\mathbb{R}^2)}\leq C$.
To prove that the support of $\omega=(\psi-Wx_2)_+$ is bounded, we estimate ${\psi(x)}/{x_2}$. Observing that $$\frac{\psi(x)}{x_2}=\int_0^1 \partial_2 \psi (x_1, \tau x_2)d\tau,$$ we have $\|{\psi(x)}/{x_2}\|_{C^{\alpha}(\overline{\Pi})}\leq C$. By Hardy's inequality, we deduce $\|{\psi(x)}/{x_2}\|_q\leq \|\nabla\psi\|_q<\infty$ for $2\leq q<\infty$. Thus, we must have $$\frac{\psi(x)}{x_2}\rightarrow 0,\quad\text{as}\ \ |x|\rightarrow \infty,$$ which implies the boundedness of $\text{supp}(\omega)$ and completes the proof of this Lemma.
\end{proof}

The uniqueness of Hill's vortex and the Chaplygin-Lamb dipole was proved by \cite{AF86} and \cite{B96}, respectively. Their method essentially relies on a transform of the differential equations of stream function and the method of moving planes. In our case, similar transforms are hard to be established for differential equation because of nonlocal nature of fractional Laplacians. Instead, we deal with the integral equation directly. Fortunately, the integral equation $\omega=(\mathcal{G}_s\omega-Wx_2)_+$ in the half plane $\Pi$ can be reduced to an integral equation in $\mathbb{R}^4$ by a suitable transform. And hence, we can apply the method of moving planes and Theorem 1.1 in \cite{C20} to prove the uniqueness of maximizers, which completes the proof of Theorem \ref{Uq}.
\begin{lemma}\label{lm15}
	Let $0<s<1$ and $0<\mu\leq \mu_0$. Then there exists a $\omega_\mu^s
\in \mathcal{A}_\mu^s$ such that $$\Sigma^s_\mu=\{\omega^s_\mu(\cdot+c\mathbf{e}_1)\mid c\in\mathbb{R}\}.$$
Moreover, $\text{supp}(\omega_\mu^s)=\overline{B_{r_\mu^s}(0)}\cap \Pi$ for some $r_\mu^s>0$.
\end{lemma}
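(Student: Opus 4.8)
The plan is to reduce the integral equation $\omega = (\mathcal{G}_s\omega - Wx_2)_+$ satisfied by a maximizer (Lemma \ref{lm12}, together with $\gamma = 0$ and $W>0$ from Lemma \ref{lm13}) to an equation on $\mathbb{R}^4$ amenable to the method of moving planes. First I would record, using Lemma \ref{lm14}, that for $\omega \in \Sigma^s_\mu$ the support is bounded, so $\omega \in L^1 \cap L^\infty(\Pi)$ with compact support, and set $\psi := \mathcal{G}_s\omega$. The key algebraic observation is that the half-space Green function $G_\Pi(x,y) = c_{2,s}(|x-y|^{2-2s} )^{-1} - c_{2,s}(|x-\bar y|^{2-2s})^{-1}$ is, up to a factor involving $x_2 y_2$, the Green function of $(-\Delta)^s$ on $\mathbb{R}^4$ evaluated at points lying on a two-dimensional slice; more precisely, writing points of $\mathbb{R}^4$ as $(x_1, x_2', x_3', x_4')$ with $x_2 = |(x_2',x_3',x_4')|$, a function on $\Pi$ depending only on $(x_1, x_2)$ lifts to an axially symmetric function on $\mathbb{R}^4$, and the operator $\mathcal{G}_s$ in the plane corresponds (after multiplying by appropriate powers of $x_2$) to convolution with $|z|^{-(4-2s)}$ in $\mathbb{R}^4$. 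This is the same device that underlies the correspondence between the 2D half-plane and axisymmetric 4D problems; the factor $x_2 y_2$ in the mean value estimate $G_\Pi(x,y) \le 4 x_2 y_2 / |x-y|^{4-2s}$ is precisely the Jacobian-type weight that makes this work.

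Second, I would perform this lift explicitly: set $v(z) := \psi(x)/x_2$ for $z \in \mathbb{R}^4$ with $(x_1, x_2)$ the cylindrical coordinates of $z$, and check that $v$ solves an integral equation of the form $v(z) = \int_{\mathbb{R}^4} \frac{C_s}{|z-y|^{4-2s}} f(y, v(y))\, dy$ on all of $\mathbb{R}^4$, where the nonlinearity $f$ arises from $\omega = (\psi - W x_2)_+ = x_2 (v - W)_+$ and is nondecreasing in the $v$-variable on the relevant range. The regularity estimates from the proof of Lemma \ref{lm14} ($\bar\psi \in C^{1,\alpha}$, $\psi(x)/x_2 \in C^\alpha(\overline\Pi)$ with decay at infinity) transfer to give $v$ bounded, Hölder continuous, and vanishing at infinity on $\mathbb{R}^4$. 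At this point the hypotheses of the moving-planes classification theorem (Theorem 1.1 in \cite{C20}) are met: a bounded positive solution of such an integral equation on $\mathbb{R}^4$ that decays at infinity, with a monotone nonlinearity, must be radially symmetric and strictly decreasing about some center. Radial symmetry of $v$ about a point $z_0 \in \mathbb{R}^4$ translates back, via the axial symmetry built into the lift, to radial symmetry of $\omega$ in $\Pi$ about a point on the boundary axis; since $\omega$ is odd in $x_2$ by construction and the center must lie on $\partial\Pi$, after an $x_1$-translation the center is the origin, and monotonicity forces $\mathrm{supp}(\omega) = \overline{B_{r^s_\mu}(0)} \cap \Pi$ for $r^s_\mu$ determined by the constraint $\int_\Pi x_2 \omega\, dx = \mu$. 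Uniqueness up to $x_1$-translation then follows because the profile is completely pinned down: given the center and the fact that $\omega = x_2(v-W)_+$ with $v$ the unique radial decreasing solution of an equation whose only free parameters $W$ and $r^s_\mu$ are fixed by the two constraints $\int \omega = $ (its value, $\le 1$) and $\int x_2 \omega = \mu$ together with the Euler–Lagrange relation, any two maximizers agree after translation.

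The main obstacle I anticipate is \textbf{setting up the $\mathbb{R}^4$ reduction rigorously} — verifying that the lifted function $v$ genuinely satisfies a clean integral equation on all of $\mathbb{R}^4$ (not merely on the image of the lift), with the correct constant and a nonlinearity of the precise structural form required by \cite{C20}, and confirming that the decay and continuity needed to run moving planes hold globally. The dimensional identity $N - 2s \mapsto 4 - 2s$ and the emergence of the weight $x_2 y_2$ are the crux; once the integral equation on $\mathbb{R}^4$ is correctly identified, invoking Theorem 1.1 of \cite{C20} is essentially a black box. A secondary technical point is that \cite{C20}'s theorem may require the nonlinearity to be monotone and perhaps Lipschitz in a suitable sense on the range of $v$; here $(v-W)_+$ times the weight is only Lipschitz, not smooth, at the free boundary $\{v = W\}$, so I would need to check that the hypothesis set of \cite{C20} accommodates this (it should, as moving planes is robust to such mild nonsmoothness), or else approximate and pass to the limit. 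With these in hand, Theorem \ref{Uq} follows by undoing the rescaling \eqref{2-10}.
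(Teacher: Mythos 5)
Your proposal follows essentially the same route as the paper: lift $\psi/x_2$ to an axially symmetric function on $\mathbb{R}^4$, show it solves the integral equation $\phi(x)=\int_{\mathbb{R}^4} c_{4,s}|x-y|^{-(4-2s)}(\phi(y)-W)_+\,dy$, and invoke the moving-plane classification of Theorem 1.1 in \cite{C20} to get radial symmetry about a point on the axis and uniqueness up to $x_1$-translation, with the half-disk support following at once. The "clean equation" issue you flag as the main obstacle is exactly what the paper settles by an explicit angular integration (the $x_2y_2$ weight cancels, leaving the unweighted nonlinearity $(\phi-W)_+$), so your outline matches the published argument.
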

\begin{proof}
Let $\omega\in \Sigma^s_\mu$ and $\psi=\mathcal{G}_s\omega$. For $y=(y',y_4)\in \mathbb{R}^4$, we set
\begin{equation*}
  \phi(y)=\frac{\psi(y_4, |y'|)}{|y'|}.
\end{equation*}
Since $\omega$ has bounded support by Lemma \ref{lm14}. For $|x|$ large, it is easy to see $$\frac{\psi(x)}{x_2}=O\left(\frac{1}{|x|^{4-2s}}\right).$$ Hence $$\phi(y)=O\left(\frac{1}{|y|^{4-2s}}\right).$$
	By a direction calculation, we have
	\begin{align*}
		&\quad \int_{\mathbb{R}^4} \frac{c_{4,s}}{|x-y|^{4-2s}} (\phi(y)-W)_+dy\\
		&=\int_{\mathbb{R}}\int_{\mathbb{R}^3} \frac{c_{4,s}}{|x-y|^{4-2s}|y'|} (\psi(y_4,|y'|)-W|y'|)_+dy'dy_4\\
		&=2\pi c_{4,s}\int_{\mathbb{R}}\int_0^\infty \int_0^\pi \frac{\rho\sin\theta(\psi(y_4,\rho)-W\rho)_+}{(\rho_x^2+\rho^2-2\rho_x\rho\cos\theta+(x_4-y_4)^2)^{2-s}}d\theta d\rho dy_4\ \ \ (\text{with}\ \rho_x:=|x|)\\
		&=\frac{\pi c_{4,s}}{(1-s)\rho_x}\int_{\mathbb{R}}\int_0^\infty  \left(\frac{1}{((\rho_x-\rho)^2+(x_4-y_4)^2)^{1-s}}-\frac{1}{((\rho_x+\rho)^2+(x_4-y_4)^2)^{1-s}}\right) \\
		&\quad\cdot (\psi(y_4,\rho)-W\rho)_+ d\rho dy_4\\
		&=\frac{\psi(x_4, |x'|)}{|x'|}\\
		&=\phi(x).
	\end{align*}
	Thus, $\phi$ satisfies the integral equation
	\begin{equation}\label{3-7}
		\phi(x)=\int_{\mathbb{R}^4} \frac{c_{4,s}}{|x-y|^{4-2s}} (\phi(y)-W)_+dy.	
	\end{equation}
	Since $\phi$ is continuous and the support of $(\phi(y)-W)_+$ is compact, one can apply the method of moving planes to deduce that $\phi$ is radially symmetric with respect to some point $y^0=(0,y^0_4)$ and hence unique up to translations in $y_4$ by Theorem 1.1 of \cite{C20}. Therefore, there exists a unique function $\omega^s_\mu\in \Sigma^s_\mu$, whose support is a half disk centered at the origin. The proof of Lemma \ref{lm15} is hence completed.
\end{proof}

We are now in a position to prove Theorem \ref{Ee}.
\begin{proof}[Proof of Theorem \ref{Ee}]
Let $0<s<1$ and $0<\mu\leq \mu_0$. Then Theorem \ref{Ee} follows from the above lemmas by letting
$$\omega_L(x)=\begin{cases}~~~\omega^s_\mu(x),&\text{if}\ \,x_2>0,\\ -\omega^s_\mu(x_1,-x_2),&\text{if}\  \,x_2\leq 0. \end{cases}$$
\end{proof}

\section{compactness of maximizing sequences}
In this section, we show the compactness of a general maximizing sequence up to translation for the $x_1$-variable by using a concentration compactness principle. Recall that $p_s=\infty$ if $0<s\leq {1}/{2}$ and $p_s=2$ if ${1}/{2}<s<1$.
\begin{theorem}\label{cp}
Let $0<s<1$ and $0<\mu\leq \mu_0$. Suppose that $\{\omega_n\}_{n=1}^\infty$ is a maximizing sequence in the sense that
	\begin{equation}\label{4-1}
		\omega_n\geq 0,\  \omega_n\in L^1(\Pi)\cap L^{p_s}(\Pi),\ \int_\Pi \omega_n dx\leq 1,\ \|\omega_n\|_{p_s}\leq C,\ \ \forall \, n\geq 1,
	\end{equation}

	\begin{equation}\label{4-2}
		\mu_n=\int_\Pi x_2\omega_n dx\rightarrow \mu, \quad \text{as}\ \ n\rightarrow \infty,
	\end{equation}
	and
	\begin{equation}\label{4-3}
		\mathcal{E}(\omega_n)\rightarrow S^s_{\mu},\quad \text{as}\ \ n\rightarrow \infty.
	\end{equation}
	Then, there exist $\omega\in \Sigma^s_{\mu}$, a subsequence $\{\omega_{n_k}\}_{k=1}^\infty$ and a sequence of real number $\{c_k\}_{k=1}^\infty$ such that
	\begin{equation}\label{4-4}
		\omega_{n_k}(\cdot+c_k\mathbf{e}_1)\rightarrow \omega\quad\text{strongly in}\,\, L^2(\Pi),
	\end{equation}
	and
	\begin{equation}\label{4-5}
		x_2\omega_{n_k}(\cdot+c_k\mathbf{e}_1)\rightarrow x_2\omega\quad\text{strongly in}\,\, L^1(\Pi),
	\end{equation}
	as $k\rightarrow \infty$.
\end{theorem}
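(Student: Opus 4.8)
The plan is to run Lions' concentration-compactness dichotomy on the measures $\mu_n\,dx$ associated to the maximizing sequence (after Steiner symmetrization in $x_1$, which by Lemma \ref{lm3} preserves all constraints and does not decrease $\mathcal{E}$, so we may assume each $\omega_n$ satisfies \eqref{2-5}). First I would normalize: since $\int_\Pi x_2\omega_n\,dx=\mu_n\to\mu>0$ and the $L^1$- and $L^{p_s}$-norms are bounded, the sequence of finite measures $d\rho_n:=x_2\omega_n(x)\,dx$ has bounded mass; applying the concentration function and Lions' lemma, up to a subsequence one of three alternatives holds — \emph{compactness} (after $x_1$-translation the mass of $\rho_n$ stays uniformly tight), \emph{vanishing} ($\sup_{y}\int_{B_R(y)}d\rho_n\to 0$ for every fixed $R$), or \emph{dichotomy} ($\rho_n$ splits into two pieces carrying mass $\alpha$ and $\mu-\alpha$, $0<\alpha<\mu$, that drift infinitely far apart in the $x_1$-direction, using Steiner symmetry to rule out vertical separation).

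The heart of the argument is excluding vanishing and dichotomy. To exclude \textbf{vanishing}: if $\sup_y\int_{B_R(y)}x_2\omega_n\,dx\to 0$, I would show $E(\omega_n)\to 0$, contradicting $S^s_\mu>0$ (Lemma \ref{lm5}/\ref{lm7}) together with $\mathcal{E}(\omega_n)\to S^s_\mu$ and the nonnegativity of $-\frac1{2}\int\omega_n^2$. The energy estimate is the quantitative input: splitting the double integral in $E(\omega_n)$ according to whether $|x-y|$ is small or large and using $G_\Pi(x,y)\le \min\{c_{2,s}|x-y|^{2s-2},\,4x_2y_2|x-y|^{2s-4}\}$ from Lemma \ref{lm2}, the near-diagonal part is controlled by the local $L^{p_s}$-bound times a small local mass of $x_2\omega_n$, and the far part decays in $R$ using $\|x_2\omega_n\|_1=\mu_n$ bounded — so vanishing forces $E(\omega_n)\to0$. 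To exclude \textbf{dichotomy}: write $\omega_n=\omega_n^{(1)}+\omega_n^{(2)}+$(small remainder) with the supports of the first two pieces separated by a distance $d_n\to\infty$ in $x_1$. Then $E(\omega_n)=E(\omega_n^{(1)})+E(\omega_n^{(2)})+(\text{cross term})$, and the cross term tends to $0$ because $G_\Pi(x,y)\le c_{2,s}|x-y|^{2s-2}\le c_{2,s}d_n^{2s-2}\to0$ on the product of the two far-apart regions (with the $\|\omega_n\|_1$ bound absorbing the integration). Writing $\mu_n^{(i)}=\int x_2\omega_n^{(i)}\,dx$ with $\mu_n^{(1)}\to\alpha$, $\mu_n^{(2)}\to\mu-\alpha$, I would get in the limit $S^s_\mu \le S^s_\alpha + S^s_{\mu-\alpha}$; but the scaling identity from \eqref{2-10} (or a direct computation as in Lemma \ref{lm5}) shows $S^s_\mu$ is strictly superadditive in $\mu$ — concretely, $S^s_\mu$ grows faster than linearly, so $S^s_\alpha+S^s_{\mu-\alpha}<S^s_\mu$ for $0<\alpha<\mu$ — a contradiction. (This strict superadditivity, a companion of Lemmas \ref{lmadd1}/\ref{lmadd3}, is the key structural fact and I would isolate it as a sublemma.)

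Once compactness of $d\rho_n$ is established, after translating $\omega_{n_k}(\cdot+c_k\mathbf{e}_1)$ the family is tight and bounded in $L^1\cap L^{p_s}$, hence (along a further subsequence) $\omega_{n_k}(\cdot+c_k\mathbf{e}_1)\rightharpoonup\omega$ weakly in $L^2(\Pi)$ (weak-$\ast$ in $L^\infty$ when $0<s\le1/2$) with $\int_\Pi x_2\omega\,dx=\mu$ by tightness and $\int_\Pi\omega\,dx\le1$, so $\omega\in\mathcal{A}^s_\mu$. The energy-convergence argument of Lemma \ref{lm6}/\ref{lm8} — localizing to $\{|x_1|<R,\,0<x_2<R\}$ using $G_\Pi\in L^{p_s'}_{loc}$, and controlling the tails via Lemmas \ref{lm1}, \ref{lm2}, \ref{lm4} and tightness — gives $E(\omega_{n_k}(\cdot+c_k\mathbf e_1))\to E(\omega)$; combined with weak lower semicontinuity of $\int\omega^2$ and $\mathcal E(\omega_{n_k})\to S^s_\mu$, this forces $\mathcal E(\omega)=S^s_\mu$, i.e. $\omega\in\Sigma^s_\mu$, and moreover $\frac12\int\omega^2=\lim\frac12\int\omega_{n_k}^2$, upgrading weak $L^2$ convergence plus norm convergence to strong $L^2$ convergence, which is \eqref{4-4}. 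Finally \eqref{4-5}: strong $L^2$ convergence gives $\omega_{n_k}(\cdot+c_k\mathbf e_1)\to\omega$ in $L^1$ on every bounded set, and tightness of $x_2\omega_{n_k}(\cdot+c_k\mathbf e_1)$ together with the uniform bound $\|x_2\omega_{n_k}\|_1=\mu_{n_k}\to\mu=\|x_2\omega\|_1$ upgrades this to strong convergence of $x_2\omega_{n_k}(\cdot+c_k\mathbf e_1)$ in $L^1(\Pi)$.

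The main obstacle I anticipate is making the dichotomy exclusion fully rigorous: one must (i) perform the splitting $\omega_n=\omega_n^{(1)}+\omega_n^{(2)}+r_n$ so that the pieces separate in $x_1$ while keeping $\|r_n\|_{1}$, $\|r_n\|_{p_s}$ and $\|x_2 r_n\|_1$ small and the impulses of the two main pieces converging to $\alpha$ and $\mu-\alpha$, (ii) verify that each truncated/translated piece (suitably renormalized to lie in some $\mathcal A^s_{\mu^{(i)}}$) is an admissible competitor so that $\limsup E(\omega_n^{(i)})\le$ the relevant sup, and (iii) prove the strict superadditivity $S^s_\alpha+S^s_{\mu-\alpha}<S^s_\mu$ cleanly — this last point is where the $\tau$-scaling computation of Lemma \ref{lm5} does the real work and must be stated with the correct power of $\tau$ (the factor $\tau^{4-2s}$ there shows $S$ is superlinear). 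Everything else is a routine adaptation of the localization estimates already proved in Section 2.
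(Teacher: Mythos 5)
Your overall architecture (Steiner symmetrize, apply Lions' lemma to $x_2\omega_n\,dx$, kill vanishing via the kernel bound $G_\Pi\le\min\{c|x-y|^{2s-2},\,4x_2y_2|x-y|^{2s-4}\}$, then pass to the limit with the localization estimates of Section 2 and upgrade weak to strong convergence by energy convergence) matches the paper up to and including the vanishing and compactness steps. The genuine gap is in your dichotomy exclusion. You hang everything on the claim that $S^s_\mu$ is strictly superadditive in $\mu$, ``because the scaling identity from \eqref{2-10} or the computation in Lemma \ref{lm5} shows $S^s_\mu$ grows faster than linearly.'' Neither source gives this: the scaling \eqref{2-10} trades $\lambda$ and $\nu$ against $\mu$ and was already used to normalize $\lambda=\nu=1$, and the dilation $\omega_\tau(x)=\tau^3\omega_1(\tau x)$ in Lemma \ref{lm5} \emph{preserves} the impulse while shrinking the mass and is only used to show $S^s_\mu>0$; it says nothing about how $S^s_\mu$ varies with $\mu$ at fixed mass cap. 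Worse, plain superadditivity of $S^s_\mu$ is not even the right statement here, because the admissible class carries the constraint $\int_\Pi\omega\,dx\le 1$: the two dichotomy pieces share this mass budget ($\nu_1+\nu_2\le 1$), whereas $S^s_\alpha$ and $S^s_{\mu-\alpha}$ each allow mass up to $1$, so the inequality $S^s_\mu\le S^s_\alpha+S^s_{\mu-\alpha}$ you would derive is against the wrong benchmark and cannot be contradicted by a one-parameter superlinearity statement. The paper avoids this entirely: it takes weak limits $\omega_i^*$ of the symmetrized pieces, shows their impulses $\alpha^*,\beta^*$ are strictly positive using only the \emph{strict monotonicity} $\mu_1<\mu_2\Rightarrow S^s_{\mu_1}<S^s_{\mu_2}$ (Lemmas \ref{lmadd1}, \ref{lmadd3}), replaces them by maximizers $\hat\omega_1\in\Sigma^s_{\alpha^*,\nu_1}$, $\hat\omega_2\in\Sigma^s_{\beta^*,\nu_2}$, which have \emph{bounded supports} by Lemma \ref{lm14}, translates them in $x_1$ to be disjoint, and contradicts $S^s_\mu$ via the strictly positive cross term $\iint\hat\omega_1G_\Pi\hat\omega_2>0$ in $\mathcal E(\hat\omega_1+\hat\omega_2)\le S^s_{\alpha^*+\beta^*}\le S^s_\mu$. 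If you want to keep your route you must either prove strict superadditivity of the two-parameter quantity $S^s_{\mu,\nu}$ jointly in $(\mu,\nu)$ or switch to this monotonicity-plus-interaction argument; as written, the key sublemma is unproven and the proposed proof of it does not work.

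A smaller but real omission: in the compactness alternative, Lions' lemma gives concentration points $y_n\in\overline\Pi$ that could escape vertically, and translations are only allowed in $x_1$. You must rule out $y_{n,2}\to\infty$ (the paper does this with the decay estimate \eqref{2-3}, showing the energy would then vanish, contradicting $S^s_\mu>0$) before you can assert tightness after an $x_1$-translation and conclude $\int_\Pi x_2\omega\,dx=\mu$ for the weak limit.
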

To prove Theorem \ref{cp}, we need the concentration compactness lemma, which is due to Lions \cite{L84}.
\begin{lemma}\label{ccp}
	Let $\{\xi_n\}_{n=1}^\infty$ be a sequence of nonnegative functions in $L^1(\Pi)$ satisfying
	$$\limsup_{n\rightarrow \infty} \int_\Pi \xi_n dx\rightarrow \mu,$$ for some $0< \mu<\infty$.
	Then, after passing to a subsequence, one of the following holds:\\
	(i) (Compactness) There exists a sequence $\{y_n\}_{n=1}^\infty$ in $\overline{\Pi}$ such that for arbitrary $\varepsilon>0$, there exists $R>0$ satisfying
	\begin{equation*}
		\int_{\Pi\cap B_R(y_n)}\xi_n dx\geq \mu-\varepsilon, \quad \forall n\geq 1.
	\end{equation*}\\
	(ii) (Vanishing) For each $R>0$,
	\begin{equation*}
		\lim_{n\rightarrow \infty}\sup_{y\in \Pi}  \int_{B_R(y)\cap\Pi} \xi_n dx =0.
	\end{equation*} \\
	(iii) (Dichotomy) There exists a constant $0<\alpha<\mu$ such that for any $\varepsilon>0$, there exist $N=N(\varepsilon)\geq 1$ and $0\leq \xi_{i,n}\leq \xi_n, \,i=1,2$ satisfying
	\begin{equation*}
		\begin{cases}
			\|\xi_n-\xi_{1,n}-\xi_{2,n}\|_1+|\alpha-\int_\Pi \xi_{1,n} dx|+|\mu-\alpha-\int_\Pi \xi_{2,n} dx|<\varepsilon,\quad \text{for}\,\,n\geq N,\\
			d_n:=dist(supp(\xi_{1,n}), supp(\xi_{2,n}))\rightarrow \infty, \quad \text{as}\,\,n\rightarrow \infty.
		\end{cases}	
	\end{equation*}
\end{lemma}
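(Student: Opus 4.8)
The plan is to follow Lions' original argument based on the L\'evy concentration functions. For each $n$ I would introduce
$$Q_n(t)=\sup_{y\in\overline{\Pi}}\int_{\Pi\cap B_t(y)}\xi_n\,dx,\qquad t\ge 0.$$
Each $Q_n$ is nonnegative, nondecreasing in $t$, and bounded above by $\int_\Pi\xi_n\,dx$, which tends to $\mu$. By Helly's selection theorem I would pass to a subsequence (not relabelled) so that $Q_n(t)\to Q(t)$ for every $t\ge 0$, with $Q$ nondecreasing; then I set $\alpha=\lim_{t\to\infty}Q(t)$, so that $0\le\alpha\le\mu$. The entire statement then reduces to the trichotomy $\alpha=0$, $\alpha=\mu$, or $0<\alpha<\mu$, which I expect to produce alternatives (ii), (i), and (iii), respectively.

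If $\alpha=0$, then $Q(t)=0$ for every $t$, and since $\sup_{y\in\Pi}\int_{B_R(y)\cap\Pi}\xi_n\,dx=Q_n(R)\to Q(R)=0$ for each fixed $R$, this is precisely the vanishing alternative. Suppose instead $\alpha=\mu$. For each $\varepsilon>0$ I would choose $R_\varepsilon$ with $Q(R_\varepsilon)>\mu-\varepsilon$, so that $Q_n(R_\varepsilon)>\mu-\varepsilon$ for large $n$, and pick a near-maximizing center. The delicate point is to replace the $\varepsilon$-dependent centers by one sequence $\{y_n\}$: after fixing a radius in which more than $\mu/2$ of the mass is captured, I would note that two balls each carrying more than $\mu-\varepsilon$ of the total mass $\mu$ cannot be disjoint once $\varepsilon<\mu/2$, since otherwise their masses would sum to more than $\mu$; hence all near-maximizing centers for a given $n$ stay within a bounded distance of one fixed choice $y_n$, and enlarging the radius to absorb this bounded shift yields the compactness alternative.

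The genuine work lies in the dichotomy case $0<\alpha<\mu$, which I expect to be the main obstacle. Fix $\varepsilon>0$. I would first select a fixed inner radius $R_0$ with $Q(R_0)>\alpha-\varepsilon$ and, for $n$ large, a center $y_n$ with $\int_{\Pi\cap B_{R_0}(y_n)}\xi_n\,dx>\alpha-\varepsilon$; since $Q$ is nondecreasing with limit $\alpha$, one has $Q(t)\le\alpha$ for all $t$, so this integral is also at most $Q_n(R_0)\le\alpha+\varepsilon$ eventually. The key construction is a diagonal choice of outer radii $R_n'\to\infty$ with $R_n'-R_0\to\infty$ for which $\int_{\Pi\cap B_{R_n'}(y_n)}\xi_n\,dx\le\alpha+\varepsilon$ persists; this is possible because $\limsup_n Q_n(T)=Q(T)\le\alpha$ for every fixed $T$, so letting $R_n'$ grow slowly enough keeps the ball mass below $\alpha+\varepsilon$. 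I would then set $\xi_{1,n}=\xi_n 1_{B_{R_0}(y_n)}$ and $\xi_{2,n}=\xi_n 1_{\Pi\setminus B_{R_n'}(y_n)}$.

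With this splitting the four requirements follow by bookkeeping: the remainder $\|\xi_n-\xi_{1,n}-\xi_{2,n}\|_1$ equals the mass of $\xi_n$ in the shell $B_{R_n'}(y_n)\setminus B_{R_0}(y_n)$, bounded by $(\alpha+\varepsilon)-(\alpha-\varepsilon)=2\varepsilon$; the deviations $|\alpha-\int_\Pi\xi_{1,n}\,dx|$ and $|\mu-\alpha-\int_\Pi\xi_{2,n}\,dx|$ are controlled by $\varepsilon$ up to an $o(1)$ coming from $\int_\Pi\xi_n\,dx\to\mu$; and $\text{dist}(\text{supp}\,\xi_{1,n},\text{supp}\,\xi_{2,n})\ge R_n'-R_0\to\infty$. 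The crux throughout is the calibration of the growth rate of $R_n'$: it must be slow enough that the inner ball still retains almost $\alpha$ of the mass while the shell $B_{R_n'}(y_n)\setminus B_{R_0}(y_n)$ carries a negligible amount, and it is exactly the pointwise convergence $Q_n(T)\to Q(T)\le\alpha$ that makes such a diagonal extraction possible.
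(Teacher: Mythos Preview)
Your proposal is correct and follows exactly the L\'evy concentration-function argument of Lions that the paper invokes; in fact the paper does not prove this lemma at all but simply states ``This lemma is a slight reformulation of Lemma~1.1 in \cite{L84}, so we omit the proof.'' Your sketch of the trichotomy via $\alpha=\lim_{t\to\infty}Q(t)$ is the standard route and is precisely what the citation points to.
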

\begin{proof}
	This lemma is a slight reformulation of Lemma 1.1 \cite{L84}, so we omit the proof.
\end{proof}

\begin{proof}[Proof of Theorem \ref{cp}]
Let $\xi_n=x_2\omega_n$ and apply Lemma \ref{ccp}. Then, for a certain subsequence, still denoted by $\{\omega_n\}_{n=1}^\infty$, one of the three cases in Lemma \ref{ccp} should occur. We divide the proof into three steps.

\emph{Step 1. Vanishing excluded:}
Suppose for each fixed $R>0$,
\begin{equation}\label{4-6}
	\lim_{n\rightarrow \infty}\sup_{y\in \Pi}  \int_{B_R(y)\cap\Pi} x_2\omega_n dx =0.
\end{equation}
We will show $\lim_{n\rightarrow \infty} E(\omega_n)=0$, which contradicts to the fact $S^s_\mu>0$. Recall that
\begin{equation*}
  G_\Pi(x,y)\leq \frac{4x_2y_2}{|x-y|^{4-2s}}.
\end{equation*}
Hence $G_\Pi(x,y)\geq Rx_2y_2$ implies $|x-y|\leq CR^{-\frac{1}{4-2s}}$. By $G_\Pi(x,y)\in L_{loc}^{p_s'}(\overline{\Pi}\times\overline{\Pi})$, we estimate
\begin{align*}
	&\quad2E(\omega_n)\\
	&=\int_\Pi\int_\Pi \omega_n(x)G_\Pi(x,y)\omega_n(y)dxdy\\
	&=\iint_{|x-y|\geq R}\omega_n(x)G_\Pi(x,y)\omega_n(y)dxdy  +\iint_{|x-y|< R}\omega_n(x)G_\Pi(x,y)\omega_n(y)dxdy\\
	&\leq \frac{C\mu^2}{R^{4-2s}}+ \iint_{|x-y|< R,\atop G_\Pi(x,y)<Rx_2y_2} \omega_n(x)G_\Pi(x,y)\omega_n(y)dxdy \\
&\ \ \ \ \ \ \ \ \ \ \ \ \ \ \ \ \ \ \ \ \ \ \ \ +\iint_{|x-y|< R, \atop G_\Pi(x,y)\geq Rx_2y_2} \omega_n(x)G_\Pi(x,y)\omega_n(y)dxdy\\
	&\leq  \frac{C\mu^2}{R^{4-2s}}+R\mu\left(\sup_{y\in \Pi}  \int_{B_R(y)\cap\Pi} x_2\omega_n dx\right) +\iint_{|x-y|< CR^{-\frac{1}{4-2s}}} \omega_n(x)G_\Pi(x,y)\omega_n(y)dxdy\\
	&\leq  \frac{C\mu^2}{R^{4-2s}}+R\mu\left(\sup_{y\in \Pi}  \int_{B_R(y)\cap\Pi} x_2\omega_n dx\right) +\|\omega_n\|_{p_s}\int_\Pi \omega_n\left(\int_{|x-y|< CR^{-\frac{1}{4-2s}}} G_\Pi(x,y)^{p'_s}dy\right)^{\frac{1}{p'_s}}dx\\
& \leq  \frac{C\mu^2}{R^{4-2s}}+R\mu\left(\sup_{y\in \Pi}  \int_{B_R(y)\cap\Pi} x_2\omega_n dx\right) +CR^{-\frac{s-{1}/{p_s}}{2-s}}.
\end{align*}
We infer from the above calculations by first letting $n\rightarrow \infty$, then $R\rightarrow \infty$ that
$$\lim_{n\rightarrow \infty} E(\omega_n)=0.$$
However,  $0<S_\mu=\lim_{n\rightarrow \infty} \mathcal{E}(\omega_n) \leq \lim_{n\rightarrow \infty} E(\omega_n)$. This is a contradiction.

\emph{Step 2. Dichotomy excluded:}
Suppose there exists a constant $0<\alpha<\mu$ such that for any $\varepsilon>0$, there exist $N(\varepsilon)\geq 1$ and $0\leq \omega_{i,n}\leq \omega_n, \,i=1,2,3$ satisfying
\begin{equation*}
	\begin{cases}
		\omega_n=\omega_{1,n}+\omega_{2,n}+\omega_{3,n},\\
		||x_2\omega_{3,n}||_1+|\alpha-\alpha_n|+|\mu-\alpha-\beta_n|<\varepsilon,\quad \text{for}\,\,n\geq N(\varepsilon),\\
		d_n:=dist(supp(\omega_{1,n}), supp(\omega_{2,n}))\rightarrow \infty, \quad \text{as}\,\,n\rightarrow \infty,
	\end{cases}	
\end{equation*}
where $\alpha_n=\|x_2\omega_{1,n}\|_1$ and $ \beta_n=\|x_2\omega_{2,n}\|_1$. Using the diagonal argument, we obtain that there exists a subsequence, still denoted by $\{\omega_n\}_{n=1}^\infty$, such that
\begin{equation}\label{4-7}
	\begin{cases}
		\omega_n=\omega_{1,n}+\omega_{2,n}+\omega_{3,n}, \quad 0\leq \omega_{i,n}\leq \omega_n, \,i=1,2,3\\
		\|x_2\omega_{3,n}\|_1+|\alpha-\alpha_n|+|\mu-\alpha-\beta_n|\rightarrow 0,\quad \text{as}\,\,n\rightarrow \infty,\\
		d_n:=dist(supp(\omega_{1,n}), supp(\omega_{2,n}))\rightarrow \infty, \quad \text{as}\,\,n\rightarrow \infty.
	\end{cases}	
\end{equation}
By the symmetry of $E$, we have
\begin{align*}
	2E(&\omega_n)=E(\omega_{1,n}+\omega_{2,n}+\omega_{3,n})\\
	&=\int_\Pi\int_\Pi \omega_{1,n}(x)G_\Pi(x,y)\omega_{1,n}(y)dxdy+\int_\Pi\int_\Pi \omega_{2,n}(x)G_\Pi(x,y)\omega_{2,n}(y)dxdy\\
	&\quad +2\int_\Pi\int_\Pi \omega_{1,n}(x)G_\Pi(x,y)\omega_{2,n}(y)dxdy+\int_\Pi\int_\Pi (2\omega_n-\omega_{3,n}(x))G_\Pi(x,y)\omega_{3,n}(y)dxdy
\end{align*}
For fixed $M>0$,
\begin{align*}
	 &\int_\Pi\int_\Pi (2\omega_n-\omega_{3,n}(x))G_\Pi(x,y)\omega_{3,n}(y)dxdy\\
	&\leq \iint_{G_\Pi(x,y)< Mx_2y_2}2\omega_n(x)G_\Pi(x,y)\omega_{3,n}(y)dxdy +\iint_{G_\Pi(x,y)\geq  Mx_2y_2}2\omega_n(x)G_\Pi(x,y)\omega_{3,n}(y)dxdy\\
	&\leq 2M\mu_n \|x_2\omega_{3,n}\|_1+2\|\omega_{3,n}\|_{p_s}\int_\Pi \omega_n(x)\left(\int_{|x-y|< CM^{-\frac{1}{4-2s}}} G_\Pi(x,y)^{p'_s}dy\right)^{\frac{1}{p'_s}}dx\\
	&\leq 2Mo_n(1)+CM^{-\frac{s-{1}/{p_s}}{2-s}},
\end{align*}
It is obvious that
\begin{align*}
	\int_\Pi\int_\Pi \omega_{1,n}(x)G_\Pi(x,y)\omega_{2,n}(y)dxdy\leq \frac{C\mu_n^2}{d_n^{4-2s}}.
\end{align*}
Hence, we arrive at
$$\mathcal{E}(\omega_n)=E(\omega_n)-\frac{1}{2}\int_\Pi \omega^2_n dx \leq \mathcal{E}(\omega_{1,n})+\mathcal{E}(\omega_{2,n})+\frac{C\mu_n^2}{d_n^{4-2s}}+2Mo_n(1)+CM^{-\frac{s-{1}/{p_s}}{2-s}}.$$
Taking Steiner symmetrization $\omega^*_{i,n}$ of $\omega_{i,n}$ for $i=1,2$, we have
\begin{equation*}
	\begin{cases}
		\mathcal{E}(\omega_n)\leq \mathcal{E}(\omega^*_{1,n})+\mathcal{E}(\omega^*_{2,n})+\frac{C\mu_n^2}{d_n^{4-2s}}+2Mo_n(1)+CM^{-\frac{s-{1}/{p_s}}{2-s}},\\
		\|\omega^*_{1,n}\|_1+\|\omega^*_{2,n}\|_1\leq 1, \|\omega^*_{1,n}\|_{p_s}+\|\omega^*_{2,n}\|_{p_s}\leq C,\\
		\|x_2\omega^*_{1,n}\|_1=\alpha_n,   \|x_2\omega^*_{2,n}\|_1=\beta_n.
	\end{cases}	
\end{equation*}
We may assume that $\omega^*_{i,n}\rightarrow \omega^*_{i}$   weakly in $L^2(\Pi)$ for $i=1,2$ when ${1}/{2}<s<1$ and weakly star in $L^\infty$ when $0<s\leq {1}/{2}$. Then, arguing as in the proof of Lemma \ref{lm6}, we have the convergence of the kinetic energy
$$\lim_{n\rightarrow \infty} E(\omega^*_{i,n})=E(\omega^*_{i}),$$
for $i=1,2$.
First letting $n\rightarrow \infty$, then $M\rightarrow \infty$, we obtain
\begin{equation*}
	\begin{cases}
		S^s_\mu\leq \mathcal{E}(\omega^*_{1})+\mathcal{E}(\omega^*_{2}),\\
		\|\omega^*_{1}\|_1+\|\omega^*_{2}\|_1\leq 1, \|\omega^*_{1}\|_2+\|\omega^*_{2}\|_2\leq C,\\
		\|x_2\omega^*_{1}\|_1\leq\alpha,   \|x_2\omega^*_{2}\|_1\leq \mu-\alpha.
	\end{cases}	
\end{equation*}
Set $\alpha^*=\|x_2\omega^*_{1}\|_1\leq\alpha$, $\nu_1=\|\omega^*_{1}\|_1$,  $\beta^*=\|x_2\omega^*_{2}\|_1\leq\mu-\alpha$ and  $\nu_2=\|\omega^*_{2}\|_1$. We claim that $$\alpha^*, \beta^*>0.$$
In fact, suppose that $\alpha^*=0$, then we have $\omega_1^*\equiv0$, and hence $$S^s_\mu\leq \mathcal{E}(\omega^*_{1})+\mathcal{E}(\omega^*_{2})\leq \mathcal{E}(\omega^*_{2})\leq S_{\beta^*}^s.$$
This is a contradiction to Lemmas \ref{lmadd1} and \ref{lmadd3}. Similarly, one can verify $\beta^*>0$.
We choose $\hat{\omega}_1\in \Sigma^s_{\alpha^*, \nu_1}$, $\hat{\omega}_2\in \Sigma^s_{\beta^*,\nu_2}$. We further have that supports of $\hat{\omega}_i,$ $i=1,2$ are bounded by Lemma \ref{lm14}. Therefore, we may assume that $\text{supp}(\hat{\omega}_1)\cap \text{supp}(\hat{\omega}_2)=\varnothing$ by suitable translations in $x_1$-direction. Now, we set $\hat{\omega}=\hat{\omega}_1+\hat{\omega}_2$, then
\begin{equation*}
	\begin{cases}
		\int_\Pi \hat{\omega} dx=\int_\Pi\hat{\omega}_1 dx+\int_\Pi\hat{\omega}_2dx\leq 1,\\
		\int_\Pi x_2\hat{\omega} dx=\int_\Pi\hat{\omega}_1dx+\int_\Pi\hat{\omega}_2dx=\alpha^*+\beta^*\leq \mu,\\
	\end{cases}
\end{equation*}
which implies that $\hat{\omega}\in \mathcal{A}^s_{\alpha^*+\beta^*}$.
Thus, observing that $\hat{\omega}_1\not\equiv0$ and $\hat{\omega}_2\not\equiv0$, we have
\begin{equation*}
\begin{split}
   S^s_\mu & \leq \mathcal{E}(\omega^*_{1})+\mathcal{E}(\omega^*_{2})\leq \mathcal{E}(\hat{\omega}_1)+\mathcal{E}(\hat{\omega}_2)=\mathcal{E}(\hat{\omega})-2\int_\Pi\int_\Pi \hat{\omega}_1(x)G_\Pi(x,y)\hat{\omega}_2(y)dxdy \\
     & <S^s_{\alpha^*+\beta^*}\leq S^s_\mu,
\end{split}
\end{equation*}
which is a contradiction.

\emph{Step 3. Compactness:} Assume that there exists a sequence $\{y_n\}_{n=1}^\infty$ in $\overline{\Pi}$ such that for arbitrary $\varepsilon>0$, there exists $R>0$ satisfying
\begin{equation}\label{4-8}
	\int_{\Pi\cap B_R(y_n)} x_2\omega_n dx\geq \mu-\varepsilon, \quad \forall\, n\geq 1.
\end{equation}
We may assume that $y_n=(0, y_{n,2})$ after a suitable $x_1$-translation.
We claim:
\begin{equation}\label{4-9}
	\sup_{n\geq 1} y_{n,2}<\infty.
\end{equation}
Suppose on the contrary that there exists a subsequence, still denoted by $y_{n,2}$, such that $$\lim_{n\rightarrow \infty} y_{n,2}=\infty.$$
In this case, we calculate,
\begin{align*}
	2E(\omega_n)&=\int_\Pi\omega_n(x)\mathcal{G}_s\omega_n(x) dx\\
	&=\int_{\Pi\cap B_R(y_n)}\omega_n(x)\mathcal{G}_s\omega_n(x) dx +\int_{\Pi\setminus B_R(y_n)}\omega_n(x)\mathcal{G}_s\omega_n(x) dx.
\end{align*}
Recall that $\{\omega_n\}_{n=1}^\infty$ is uniformly bounded in $L^{p_s}(\Pi)$. By \eqref{2-3}, choosing $1/s<q\le p_s$ with $2s-2/q<1/2$, we derive
\begin{align*}
	\int_{\Pi\cap B_R(y_n)}\omega_n(x)\mathcal{G}_s\omega_n(x) dx
	\leq \frac{C}{(y_{n,2}-R)^{1+\frac{2}{q}-2s}}+\frac{C}{(y_{n,2}-R)^{4-2s}}\leq \frac{C}{(y_{n,2}-R)^{\frac{1}{2}}}
\end{align*}
Fixed $M>0$ large, we have
\begin{align}\label{4-10}
	&\quad\int_{\Pi\setminus B_R(y_n)}\omega_n(x)\mathcal{G}_s\omega_n(x) dx\\
	&=\iint_{x\in\Pi\setminus B_R(y_n),\atop  G_\Pi(x,y)\leq Mx_2y_2} \omega_n(x)G_\Pi(x,y)\omega_n(y) dxdy +\iint_{x\in\Pi\setminus B_R(y_n),\atop  G_\Pi(x,y)> Mx_2y_2} \omega_n(x)G_\Pi(x,y)\omega_n(y) dxdy\nonumber\\
	&\leq M\mu_n (\mu_n-\mu+\varepsilon) +CM^{-\frac{s-{1}/{p_s}}{2-s}}.\nonumber
\end{align}
Therefore, letting $n\rightarrow \infty$, $\varepsilon\rightarrow 0$ and $M\rightarrow \infty$ one by one, we conclude
$$0<S^s_\mu\leq \lim_{n\rightarrow\infty} E(\omega_n)=0,$$ which is absurd. Hence, we have proved claim \eqref{4-9}. Then, we may assume that $y_{n,2}=0$ by taking $R$ larger.
Now, we have
$$\int_{\Pi\cap B_R(0)} x_2\omega_n dx \geq \mu-\varepsilon, \quad \forall\, n\geq 1.$$
Since $\{\omega_n\}_{n=1}^\infty$ is uniformly bounded in $L^1\cap L^{p_s}$, we may assume that $\omega_n\rightarrow \omega$ weakly in $L^2$ and weakly-star in $L^{p_s}$. It is obvious that
$$\int_\Pi \omega dx\leq 1, \quad \int_\Pi x_2\omega dx=\mu,$$
and hence $\omega\in \mathcal{A}^s_\mu$.

Next, we prove the convergence of the energy.
On the one hand, for fixed $M>0$,  similar to \eqref{4-10}, we compute
\begin{align*}
	2E(\omega_n)&=\int_\Pi\int_\Pi \omega_n(x)G_\Pi(x,y)\omega_n(y)dxdy\\
	&\leq \int_{\Pi\cap B_R(0)}\int_{\Pi\cap B_R(0)} \omega_n(x)G_\Pi(x,y)\omega_n(y)dxdy +\int_{\Pi\setminus B_R(0)}\int_{\Pi} \omega_n(x)G_\Pi(x,y)\omega_n(y)dxdy\\
	&\quad+ \int_{\Pi}\int_{\Pi\setminus B_R(0)} \omega_n(x)G_\Pi(x,y)\omega_n(y)dxdy\\
	&\leq \int_{\Pi\cap B_R(0)}\int_{\Pi\cap B_R(0)} \omega_n(x)G_\Pi(x,y)\omega_n(y)dxdy +2\int_{\Pi\setminus B_R(0)}\omega_n(x)\mathcal{G}_s\omega_n(x) dx\\
	&\leq \int_{\Pi\cap B_R(0)}\int_{\Pi\cap B_R(0)} \omega_n(x)G_\Pi(x,y)\omega_n(y)dxdy +M\mu_n (\mu_n-\mu+\varepsilon) +CM^{-\frac{s-{1}/{p_s}}{2-s}}.
\end{align*}
Letting $n\rightarrow \infty$, $\varepsilon\rightarrow 0$ and $M\rightarrow \infty$ one by one, we deduce
$$\limsup_{n\rightarrow \infty} E(\omega_n)\leq E(\omega).$$
On the other hand, for any $R_1>0$, one has
$$	2E(\omega_n)=\int_\Pi\int_\Pi \omega_n(x)G_\Pi(x,y)\omega_n(y)dxdy\geq \int_{\Pi\cap B_{R_1}(0)} \int_{\Pi\cap B_{R_1}(0)}\omega_n(x)G_\Pi(x,y)\omega_n(y)dxdy,$$
which implies $$\liminf _{n\rightarrow \infty}E(\omega_n)\geq E(\omega)$$ by first letting $n\rightarrow \infty$ and then $R_1\rightarrow \infty$.
Therefore, we obtain the convergence
\begin{equation}\label{4-11}
	\lim _{n\rightarrow \infty }E(\omega_n)= E(\omega).
\end{equation}
Since $\omega\in \mathcal{A}_\mu^s$, we infer from \eqref{4-11} and the weak convergence that
\begin{equation}\label{4-12}
	S_\mu^s=\lim_{n\rightarrow \infty} \mathcal{E}(\omega_n) \leq \lim_{n\rightarrow \infty} E(\omega_n) -\frac{1}{2}\liminf _{n\rightarrow \infty}\int_\Pi \omega_n^2 dx\leq \mathcal{E}(\omega)\leq S_\mu^s.
\end{equation}
Thus, we must have $\mathcal{E}(\omega)= S^s_\mu$, which means that $\omega\in \Sigma^s_\mu$ is a maximizer. Moreover,
We can also conclude from \eqref{4-11} and \eqref{4-12} that $$\lim_{n\rightarrow \infty} \int_\Pi \omega_n^2 dx=\int_\Pi \omega^2 dx,$$ and hence
$$\omega_n\rightarrow \omega\quad\text{strongly in}\,\, L^2(\Pi).$$
Finally, we estimate
\begin{align*}
	\int_\Pi x_2|\omega_n-\omega|dx&= \int_{\Pi\cap B_R(0)} x_2|\omega_n-\omega|dx+\int_{\Pi\setminus B_R(0)} x_2|\omega_n-\omega|dx\\
	&\leq CR^2\|\omega_n-\omega\|_2+\int_{\Pi\setminus B_R(0)} x_2(\omega_n+\omega)dx\\
	&\leq CR^2\|\omega_n-\omega\|_2+\mu_n-\mu+2\epsilon.
\end{align*}
Letting $n\rightarrow \infty$ and then $\varepsilon\rightarrow 0$, we obtain
$$x_2\omega_n\rightarrow x_2\omega\quad\text{strongly in}\,\, L^1(\Pi).$$
The proof of Theorem \ref{cp} is thus complete.
\end{proof}

\section{Orbital Stability}
In this section, we study orbital stability of the solutions obtained in Theorem \ref{Ee}. Using Theorem \ref{cp}, we can obtain the following orbital stability for the set of maximizers.
\begin{theorem}\label{os}
Let $0<s<1$ and $0<\mu\leq \mu_0$. Then for arbitrary $M>0$ and $\varepsilon>0$, there exists $\delta>0$ such that for non-negative function $\xi_0\in L^1(\Pi)\cap L^{p_s}(\Pi)$ with $\|\xi_0\|_1\leq 1$, $\|\xi_0\|_{p_s}\le M$ and
	\begin{equation}\label{5-1}
		\inf_{\omega\in \Sigma^s_{\mu}} \Big{\{}\|\xi_0-\omega\|_2+\|x_2(\xi_0-\omega)\|_1\Big{\}}\leq \delta,
	\end{equation}
	if there exists a $L^{p_s}$-regular solution $\xi(t)$, $t\in [0, T)$ for some $0<T\leq \infty$  with the initial data $\overline{\xi_0}$, then
	\begin{equation}\label{5-2}
		\inf_{\omega\in \Sigma^s_{\mu}} \Big{\{}\|\xi(t)-\omega\|_2+\|x_2(\xi(t)-\omega)\|_1\Big{\}}\leq \varepsilon, \quad \forall\, t\in[0,T)
	\end{equation}
\end{theorem}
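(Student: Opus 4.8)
The plan is to deduce Theorem \ref{os} from the compactness result Theorem \ref{cp} by a contradiction argument based on the conservation laws. Suppose the conclusion fails: there is $\varepsilon_0>0$ and, for every $n$, a nonnegative datum $\xi_0^n\in L^1(\Pi)\cap L^{p_s}(\Pi)$ with $\|\xi_0^n\|_1\le 1$ and $\|\xi_0^n\|_{p_s}\le M$, a maximizer $\omega_n\in\Sigma^s_\mu$ with $\|\xi_0^n-\omega_n\|_2+\|x_2(\xi_0^n-\omega_n)\|_1\to 0$, an $L^{p_s}$-regular solution $\xi^n$ on $[0,T_n)$ with initial datum $\overline{\xi_0^n}$, and a time $t_n\in[0,T_n)$ such that, writing $\zeta^n(t):=\xi^n(t)\big|_{\Pi}$,
\[
 \inf_{\omega\in\Sigma^s_\mu}\bigl\{\|\zeta^n(t_n)-\omega\|_2+\|x_2(\zeta^n(t_n)-\omega)\|_1\bigr\}>\varepsilon_0 .
\]

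First I would exploit the conservation laws in Definition \ref{def2}. Since $\overline{\xi_0^n}$ is odd in $x_2$, so is $\xi^n(t)$; moreover $\xi^n(t)\ge0$ on $\Pi$ for all $t$, because the stream function, being the Riesz potential of an odd function, vanishes on $\{x_2=0\}$, so the normal velocity $u_2=-\partial_{x_1}\psi$ vanishes there and the $x_1$-axis is invariant under the (incompressible) flow. Using the odd symmetry, conservation of the $L^q$-norms for $1\le q\le p_s$, of the impulse $I$, and of the kinetic energy of $\xi^n(t)$ translates into: $\|\zeta^n(t)\|_1\le1$, $\|\zeta^n(t)\|_{p_s}\le CM$ uniformly in $n,t$, $\int_\Pi x_2\zeta^n(t)\,dx=\int_\Pi x_2\xi_0^n\,dx=:\mu_n$, and $\mathcal{E}(\zeta^n(t))=\mathcal{E}(\xi_0^n)$, all independent of $t$; here one uses that on odd functions the $\mathbb{R}^2$-energy equals a fixed multiple of the half-plane energy $E$, and that $E$ is well defined and finite on $L^1\cap L^{p_s}$ by Lemma \ref{lm1} (this is exactly why $p_s$ is chosen as it is).

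Next I would verify the hypotheses \eqref{4-1}--\eqref{4-3} of Theorem \ref{cp} for $\{\zeta^n(t_n)\}_n$. The bounds \eqref{4-1} are immediate from the previous step. For \eqref{4-2}, $|\mu_n-\mu|\le\|x_2(\xi_0^n-\omega_n)\|_1\to0$. For \eqref{4-3} it suffices to show $\mathcal{E}(\xi_0^n)\to S^s_\mu$: by Lemma \ref{lm15} every maximizer is an $x_1$-translate of $\omega^s_\mu$, so $\|\omega_n\|_2=\|\omega^s_\mu\|_2$ is fixed and $\|\xi_0^n-\omega_n\|_2\to0$ gives $\|\xi_0^n\|_2\to\|\omega_n\|_2$; writing $2E(f)-2E(g)=\langle f-g,\ \mathcal{G}_s(f+g)\rangle$ and estimating the pairing with the pointwise bounds of Lemmas \ref{lm1}, \ref{lm2} and \ref{lm4} (splitting into a region near $\{x_2=0\}$, a region near infinity, and a bounded region, just as in the proof of Lemma \ref{lm6}) yields $E(\xi_0^n)\to E(\omega_n)$, hence $\mathcal{E}(\xi_0^n)=E(\xi_0^n)-\tfrac12\|\xi_0^n\|_2^2\to E(\omega_n)-\tfrac12\|\omega_n\|_2^2=\mathcal{E}(\omega_n)=S^s_\mu$. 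Therefore $\mathcal{E}(\zeta^n(t_n))=\mathcal{E}(\xi_0^n)\to S^s_\mu$, and Theorem \ref{cp} applies: along a subsequence there are reals $c_k$ and $\omega^\ast\in\Sigma^s_\mu$ with $\zeta^{n_k}(t_{n_k})(\cdot+c_k\mathbf{e}_1)\to\omega^\ast$ in $L^2(\Pi)$ and $x_2\zeta^{n_k}(t_{n_k})(\cdot+c_k\mathbf{e}_1)\to x_2\omega^\ast$ in $L^1(\Pi)$.

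Finally, since $\Sigma^s_\mu$ is translation invariant in $x_1$ (Lemma \ref{lm15}), $\omega^\ast(\cdot-c_k\mathbf{e}_1)\in\Sigma^s_\mu$, and a change of variables gives
\[
 \inf_{\omega\in\Sigma^s_\mu}\bigl\{\|\zeta^{n_k}(t_{n_k})-\omega\|_2+\|x_2(\zeta^{n_k}(t_{n_k})-\omega)\|_1\bigr\}\le\|\zeta^{n_k}(t_{n_k})(\cdot+c_k\mathbf{e}_1)-\omega^\ast\|_2+\|x_2(\zeta^{n_k}(t_{n_k})(\cdot+c_k\mathbf{e}_1)-\omega^\ast)\|_1\longrightarrow0,
\]
contradicting the choice of $\varepsilon_0$. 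I expect the main obstacle to be the second step: transferring the conservation laws and the finiteness of $\mathcal{E}$ from the possibly non-smooth full-plane $L^{p_s}$-regular solution to its half-plane restriction — in particular justifying preservation of positivity on $\Pi$ and the invariance of the $x_1$-axis — together with the continuity of $E$ with respect to the $\|\cdot\|_2+\|x_2\,\cdot\|_1$ topology used in the third step; both reduce to the estimates of Section 2, but require some care near $\{x_2=0\}$ and at infinity.
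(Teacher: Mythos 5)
Your argument is essentially the paper's proof: the same contradiction setup, transfer of the conservation laws of Definition \ref{def2} to the half-plane restrictions at the times $t_n$, continuity of the energy along the initial data to get $\mathcal{E}(\xi_{0,n})\to S^s_\mu$, and then Theorem \ref{cp} together with the $x_1$-translation invariance of $\Sigma^s_\mu$ (Lemma \ref{lm15}) to reach a contradiction. One small correction: establish $E(\xi_{0,n})\to E(\omega_n)$ by splitting the double integral according to $G_\Pi(x,y)\lessgtr Rx_2y_2$ and using H\"older's inequality together with $\|x_2(\xi_{0,n}-\omega_n)\|_1\to 0$ and the uniform $L^1\cap L^{p_s}$ bounds (as the paper does), rather than via Lemma \ref{lm4}, which requires Steiner symmetry that $\xi_{0,n}+\omega_n$ need not possess.
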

\begin{proof}
We argue by contradiction. Suppose on the contrary that there exists $\varepsilon_0>0$, non-negative functions $\xi_{0,n}\in L^1(\Pi)\cap L^{p_s}(\Pi)$ with $\|\xi_{0,n}\|_1\leq 1$, $\|\xi_{0,n}\|_{p_s}\le M_1$ for some $M_1>0$ independent of $\xi_0$, and $t_n\geq 0$ such that
	\begin{equation*}
		\inf_{\omega\in \Sigma^s_{\mu}} \Big{\{}\|\xi_{0,n}-\omega\|_2+\|x_2(\xi_{0,n}-\omega)\|_1\Big{\}}\leq \frac{1}{n},
	\end{equation*}
and
	\begin{equation}\label{5-3}
		\inf_{\omega\in \Sigma^s_{\mu}} \Big{\{}\|\xi_{n}(t_n)-\omega\|_2+\|x_2(\xi_{n}(t_n)-\omega)\|_1\Big{\}}\geq \varepsilon_0,
	\end{equation}
	where $\xi_n(t)$ is a $L^{p_s}$-regularity solution $\xi_n(t)$, $t\in [0, T_n)$ for some $T_n>0$ with the initial data $\xi_{0,n}$ and $0\leq t_n< T_n$.
	There exists $\omega_n\in \Sigma_\mu^s$ such that
	$$\|\xi_{0,n}-\omega_n\|_2+\|x_2(\xi_{0,n}-\omega_n)\|_1\leq \frac{2}{n}, \quad \forall\, n\geq 1.$$
	We infer from Lemma \ref{lm15} that $\omega_n$ equals to $\omega_\mu^s$ after a $x_1$-translation. Thus, we have $\|\xi_{0,n}-\omega_n\|_{p_s}<M_1+\|\omega_\mu^s\|_{p_s}$. By H\"older's inequality, we have
	\begin{align*}
		|E(\xi_{0,n})-E(\omega_n)|&=\left|\int_\Pi\int_\Pi (\xi_{0,n}-\omega_n)G_\Pi(x,y)(\xi_{0,n}+\omega_n)dxdy\right|\\
		&\leq \iint_{G_\Pi(x,y)>Rx_2y_2} |\xi_{0,n}-\omega_n|(x)G_\Pi(x,y)(\xi_{0,n}+\omega_n)(y)dxdy\\
		&\ \ \ \ \quad +\iint_{G_\Pi(x,y)\leq Rx_2y_2} |\xi_{0,n}-\omega_n|G_\Pi(x,y)(\xi_{0,n}+\omega_n)dxdy\\
		&\leq \iint_{|x-y|<CR^{-\frac{1}{4-2s}}} |\xi_{0,n}-\omega_n|(x)G_\Pi(x,y)(\xi_{0,n}+\omega_n)(y)dxdy\\
		&\ \ \ \ \quad+R\int_\Pi\int_\Pi |\xi_{0,n}-\omega_n|(x)x_2y_2(\xi_{0,n}+\omega_n)(y)dxdy\\
		&\leq CR^{-\frac{s-{1}/{p_s}}{2-s}}+CR||x_2(\xi_{0,n}-\omega_n)||_1.
	\end{align*}
	Hence, we deduce
	$$|\mathcal{E}(\xi_{0,n})-S^s_\mu|=|\mathcal{E}(\xi_{0,n})-\mathcal{E}(\omega_n)|\leq \|\xi_{0,n}-\omega_n\|_2+CR\|x_2(\xi_{0,n}-\omega_n)\|_1+CR^{-\frac{s-{1}/{p_s}}{2-s}},$$
	which implies $$\mathcal{E}(\xi_{0,n})\rightarrow S^s_\mu,$$
	by first letting $n\rightarrow \infty$ and then $R\rightarrow \infty$.
	By the conservation laws, functions $\xi^n:=\xi_{n}(t_n)$ satisfy
	\begin{equation*}
		\begin{cases}
			\xi^n\geq 0,\ \xi^n\in L^1(\Pi)\cap L^p(\Pi),\ \int_\Pi \xi^n dx\leq 1,\ \|\xi^n\|_{p_s}\le M_1,\\
			\mu_n=\int_\Pi x_2\xi^n dx\rightarrow \mu, \quad  \text{as} \  n\rightarrow \infty,\\
			\mathcal{E}(\xi^n)\rightarrow S^s_{\mu},\quad  \text{as} \ n\rightarrow \infty,
		\end{cases}
	\end{equation*}
	Therefore, we infer from Theorem \ref{cp} that there exists $\omega\in \Sigma^s_{\mu}$, a subsequence $\{\xi^{n_k}\}_{k=1}^\infty$ and a sequence of real number $\{c_k\}_{k=1}^\infty$ such that
	\begin{equation*}
		\|\xi^{n_k}(\cdot+c_k\mathbf{e}_1)-\omega\|_2+\|x_2(\xi^{n_k}(\cdot+c_k\mathbf{e}_1)-\omega)\|_1\rightarrow 0,
	\end{equation*}
	as $k\rightarrow \infty$. This is contrary to \eqref{5-3}, and hence the proof of Theorem \ref{os} is completed.	
\end{proof}

\phantom{s}
 \thispagestyle{empty}


\begin{thebibliography}{99}	
	
	
\bibitem{AC19}
K. Abe and K. Choi, Stability of Lamb Dipoles, arXiv:1911.01795.

\bibitem{AF86}
C. J. Amick and L. E. Fraenkel, The uniqueness of Hill’s spherical vortex, \textit{Arch. Rational Mech. Anal.},92 (1986), 91--119.

\bibitem{Asqg}
W. Ao, J. D\'avila, L. D. Pino, M. Musso and J. Wei, Travelling and rotating solutions to the generalized inviscid surface quasi-geostrophic equation, To appear in \textit{Trans. Amer. Math. Soc.}, https://doi.org/10.1090/tran/8406.

	
\bibitem{Arn}
V. I. Arnol'd, Mathematical Methods of Classical Mechanics,
\textit{Number 60 in Graduate Texts in Mathematics}, Springer-Verlag, New York, Second edition, 1989.
	

\bibitem{BSV19}
T. Buckmaster, S. Shkoller and V. Vicol, Nonuniqueness of weak solutions to the SQG equation, \textit{Comm. Pure Appl. Math.}, 72 (2019), 1809--1874.


\bibitem{B88}
G. R. Burton, Steady symmetric vortex pairs and rearrangements, \textit{Proc. Roy. Soc. London Ser. A}, 108A (1988), 269--290.

\bibitem{B96}
G. R. Burton, Uniqueness for the circular vortex-pair in a uniform flow,  \textit{Proc. Roy. Soc. London Ser. A}, 452 (1996), 2343--2350.

\bibitem{B05}
G. R. Burton, Global Nonlinear stability for steady ideal fluid flow in bounded planar domians, \textit{Arch. Rational Mech. Anal.}, 176 (2005), 149--163.

\bibitem{B21}
G. R. Burton, Compactness and stability for planar vortex-pairs with prescribed impulse, \textit{Journal of Differential Equations}, 270 (2021), 547--572.

\bibitem{BNL13}
G. R. Burton, H. J. Nussenzveig Lopes and M. C. Lopes Filho, Nonlinear stability for steady vortex pairs, \textit{Comm. Math. Phys.}, 324 (2013), 445--463.




\bibitem{Cafs}
L. Caffarelli and L. Silvestre, An extension problem related to the fractional Laplacian,
\textit{Comm. Partial Differential Equation}, 32 (2007), 1245--1260.

 \bibitem{CLZ}
 D. Cao, S. Lai and W. Zhan, Travelling vortex pairs for 2D incompressible Euler equations, Preprint arXiv:2012.10918.


\bibitem{CQZZ}
D. Cao, G. Qin, W. Zhan and C. Zou, Global solutions for the generalized SQG equation and rearrangements, Preprint arXiv:2103.03988.

\bibitem{CCG16}
A. Castro, D. C\'ordoba and J. G\'omez-Serrano. Existence and regularity of rotating
global solutions for the generalized surface quasi-geostrophic equations, \textit {Duke Math. J.},
165 (2016), no.5, 93--984.

\bibitem{CCG16-2}
 A. Castro, D. C\'ordoba and J. G\'omez-Serrano, Uniformly rotating analytic global patch
solutions for active scalars, \textit{ Ann. PDE}, 2 (2016), no. 1, Art. 1, 34 pp.

\bibitem{CCG20}
A. Castro, D. Cordoba and J. G\'omez-Serrano, Global smooth solutions for the inviscid SQG
equation. \textit{Mem. Amer. Math. Soc.}, 266 (2020), no. 1292, 89 pp.




\bibitem{Chae}
D. Chae, P. Constantin, D. Cordoba, F. Gancedo and J. Wu, Generalized surface quasi-geostrophic equations with singular velocities,
\textit{Comm. Pure Appl. Math.}, 65 (2012), no. 8, 1037--1066.

\bibitem{C20}
H. Chan, M.  Gonz\'alez, Y. Huang, M. Edoardo and V. Bruno, Uniqueness of entire ground states for the fractional plasma problem, \textit{Calc. Var. Partial Differential Equations}, 59 (2020), no.6, Paper No. 195, 42 pp.

\bibitem{Choi20}
K. Choi, Stability of Hill's spherical vortex, arXiv:2011.06808.

	
\bibitem{Con}
P. Constantin, A.J. Majda and E. Tabak, Formation of strong fronts in the 2-D quasigeostrophic thermal active scalar,
\textit{Nonlinearity}, 7 (1994), no. 6, 1495--1533.




\bibitem{CCG18}
A. Córdoba, D. C\'ordoba and F. Gancedo. Uniqueness for SQG patch solutions. \textit{Trans. Amer. Math. Soc. Ser. B}, 5 (2018), 1--31.



\bibitem{CG19}
D. Córdoba, J. Gómez-Serrano and A. D. Ionescu, Global Solutions for the generalized SQG patch equation. \textit{Arch Rational Mech Anal} 233 (2019), 1211--1251.

 \bibitem{Go0}
L. Godard-Cadillac, Smooth travelling-wave solutions to the inviscid surface quasi-geostrophic equations, Preprint arXiv:2010.09289.


\bibitem{CGS20}
L. Godard-Cadillac, P. Gravejat and D. Smets, Co-rotating vortices with $N$ fold symmetry for the inviscid surface quasi-geostrophic equation, arXiv:2010.08194.




\bibitem{GS17}
P. Gravejat and D. Smets, Smooth travelling-wave solutions to the inviscid surface quasigeostrophic equation, \textit{Int. Math. Res. Not.}, 6 (2019), 1744--1757.


\bibitem{HH15}
 Z. Hassainia and T. Hmidi, On the V-states for the generalized quasi-geostrophic equations, \textit{Comm. Math. Phys.}, 337(2015), no. 1, 321--377.


\bibitem{Hill}
 M. J. M. Hill, On a spherical vortex, \textit {Philos. Trans. Roy. Soc. London Ser. A}, 185 (1894), 213--245.


\bibitem{HM17}
T. Hmidi and  J. Mateu, Existence of corotating and counter-rotating vortex pairs for active scalar equations, \textit{ Comm. Math. Phys.}, 350 (2017), 699--747.

	
\bibitem{Kis1}
A. Kiselev, L. Ryzhik, Y. Yao and A. Zlato, Finite time singularity for the modified SQG patch equation,
\textit{Ann. of Math.}, (2) 184 (2016), no. 3, 909--948.

\bibitem{Kis2}
A. Kiselev, Y. Yao and A. Zlato, Local regularity for the modified SQG patch equation,
\textit{Comm. Pure Appl. Math.}, 70 (2017), no. 7, 1253--1315.


\bibitem{Lamb}
H. Lamb, Hydrodynamics, \textit{Cambridge Univ. Press.}, 3rd ed. edition, 1906.

\bibitem{Lap}
G. Lapeyre and P. Klein, Dynamics of the upper oceanic layers in terms of surface quasigeostrophic theory,
\textit{J. Phys. Oceanogr.}, 36 (2006), 165--176.

\bibitem{L84}
P.-L. Lions,  The concentration-compactness principle in the calculus of variations. The locally compact case I, \textit{Ann. Inst. H. Poincar\'e Anal. Non Lin\'eaire.} 1 (1984), no. 2, 109--145.

\bibitem{Mar08}
F. Marchand,  Existence and regularity of weak solutions to the quasi-geostrophic equations in the spaces $L^p$ or $\dot{H}^{1-2}$, \textit{Comm. Math. Phys.}, 277 (2008), no. 1, 45--67. 	

\bibitem{Mel}
V. V. Meleshko and G. J. F. van Heijst, On Chaplygin's investigations of two-dimensional vortex structures in an inviscid fluid, \textit{J. Fluid Mech.}, 272(1994), 157--182.


\bibitem{R95}
S. G. Resnick. Dynamical problems in non-linear advective partial differential equations.
PhD thesis, University of Chicago, Department of Mathematics, 1995.

\bibitem{S07}
L. Silvestre, Regularity of the obstacle problem for a fractional power of the Laplace operator, \textit{Comm. Pure Appl. Math.}, 60 (2007), no.1, 67--112.	

\bibitem{S70}
E. M. Stein, Singular Integrals and Differentiability Properties of Functions, \textit{Princeton University Press}, 1970.
	
\bibitem{T}
B. Turkington, On steady vortex flow in two dimensions. I, II,
\textit{Comm. Partial Differential Equations}, 8 (1983), 999--1030, 1031--1071.

\bibitem{T2}
B. Turkington, Corotating steady vortex flows with $N$-fold symmetry, \textit{Nonlinear Anal.}, 9(1985), no. 4, 351--369.

\bibitem{Yud}
V.I. Yudovich, Non-stationnary flows of an ideal incompressible fluid,
\textit{Zhurnal Vych Matematika}, 3 (1963), 1032--1106.
	
\end{thebibliography}
\end{document}